\newcommand{\varied}
\newcommand{\Fal}[1]{\ensuremath{F\langle{#1}\rangle}}
\newcommand{\gi}{\ensuremath{(G,\ast)}}
\newcommand{\Idd}{\ensuremath{\textnormal{Id}}^{\sharp}}
\newcommand{\varGstar}{\ensuremath{\textnormal{var}^{\sharp}}}
\newcommand{\spam}{\ensuremath{\textnormal{span}}}
\newcommand{\black}{\color{black}}
\newcommand{\Gstar}{\ensuremath{(G,\ast)}-}
\newcommand{\supp}{\ensuremath{\mbox{supp}}}
\newcommand{\IdGstar}[1]{\ensuremath{\textnormal{Id}^{\sharp}({#1})}}
\newtheorem{theorem}{Theorem}[section]
\newtheorem{example}[theorem]{Example}
\newtheorem{lemma}[theorem]{Lemma}
\newtheorem{corollary}[theorem]{Corollary}
\newtheorem{remark}[theorem]{Remark}
\newtheorem{proposition}[theorem]{Proposition}
\newtheorem{definition}[theorem]{Definition}
\newcolumntype{C}[1]{>{\centering\let\newline\\\arraybackslash\hspace{0pt}}m{#1}}
\numberwithin{equation}{section}
\begin{document}
\title{A structural classification of algebras \\ with graded involution and quadratic codimension growth}

\author{Wesley Quaresma Cota$^{a,*}$, Luiz Henrique de Souza Matos$^{b}$ and Ana Cristina Vieira$^{b}$}
\thanks{{\it E-mail addresses:} quaresmawesley@gmail.com (Cota), henrique.lui.00@gmail.com (Matos), anacris@ufmg.br (Vieira).}




\thanks{\footnotesize $^{*}$ Corresponding author.}

\subjclass[2020]{Primary 16R10, 16W50, Secondary 20C30}

\keywords{Polynomial identities, graded involution, codimension growth}

    \dedicatory{$^{a}$ IME, USP, Rua do Matão 1010, 05508-090, São Paulo, Brazil \\ $^b$ ICEx, UFMG, Avenida Antonio Carlos 6627, 31123-970, Belo Horizonte, Brazil}

\begin{abstract}  
The theory of algebras with polynomial identities has developed significantly, with special attention devoted to the classification of varieties according to the asymptotic behavior of their codimension sequences. This sequence is a fundamental numerical invariant, capturing the growth rate of the polynomial identities of a given algebra. Several partial classification results have been obtained, with particular attention devoted to algebras equipped with additional structure. In this paper, we consider associative $G$-graded algebras endowed with a graded involution. We provide a complete classification, up to equivalence, of unitary algebras with quadratic codimension growth. Our approach establishes a direct correspondence between the algebras generating minimal varieties and the nonzero multiplicities appearing in the decomposition of the proper cocharacters.  As a consequence, we establish that every variety with at most quadratic growth is generated by an algebra that decomposes as a direct sum of algebras generating minimal varieties of at most quadratic growth.
\end{abstract}

\maketitle

\section{Introduction}

One of the central problems in the theory of algebras with polynomial identities is the classification of varieties according to the asymptotic behavior of their codimension sequences. This sequence was introduced by Regev as a fundamental invariant in PI-theory, capturing the dimension of the space of multilinear polynomials modulo the polynomial identities of the algebra. It is well known that the sequence of codimensions $\{c_n(A)\}_{n\in \mathbb{N}}$ effectively measures the growth of the polynomial identities of a given algebra. Indeed, over a field of characteristic zero, any polynomial identity follows from a finite set of multilinear ones (see \cite{Kemer}).

When an algebra $A$ satisfies a nontrivial polynomial identity, i.e.,  it is a PI-algebra, Regev \cite{RG} proved that its codimension sequence is exponentially bounded. A breakthrough in this direction was obtained by Kemer \cite{Kem}, who established that $c_n(A) \le \alpha n^t$, for some constants $\alpha$ and $ t$, if and only if both the Grassmann algebra $\mathcal{G}$ and the algebra $UT_2$ of $2 \times 2$ upper triangular matrices do not belong to the variety $\textnormal{var}(A)$ generated by $A$. A key consequence is that, over a field of characteristic zero, the codimension sequence of any PI-algebra is either polynomially bounded or grows exponentially.

Motivated by these important results, La Mattina \cite{LaMa} subsequently classified all subvarieties of the varieties generated by $\mathcal{G}$ and $UT_2$. The algebras arising in this classification have proven to be fundamental in several developments in PI-theory, and they continue to play a crucial role in the present work. Later, Giambruno and La Mattina \cite{GiamLaMa} completed the classification of varieties of linear codimension growth, showing that the linear case is fully understood. However, it has become increasingly evident that new tools are required to address the general classification of varieties of quadratic growth.

 Recall that $A$ generates a minimal variety of polynomial growth $n^t$ if $c_n(A)\approx qn^t$ while every proper subvariety has polynomial growth $n^p$ for some $p<t$. In this direction, Jorge and Vieira \cite{Sandra} classified the minimal varieties of quadratic growth. The study of minimal varieties is of particular importance since  their generating algebras form the basic building blocks in the construction of more complex varieties. 
 
 This perspective naturally leads to a central open question in PI-theory: is every variety of polynomial growth obtained, up to equivalence, as a direct sum of algebras generating minimal varieties? At present, the answer to this question remains unknown. 

Significant progress has been achieved in the unitary case, where the theory of multilinear proper polynomials offers a more refined framework for understanding the structure of these varieties. This approach was explored by Giambruno, Petrogradsky and La Mattina \cite{Petro}, and later by de Oliveira and Vieira \cite{Mara3}, who classified unitary varieties of polynomial growth satisfying $c_n(A)\leq \alpha n^4$.

In recent years, the theory of polynomial identities has broadened to include additional algebraic structures such as graded algebras, algebras with involution or superinvolution. This approach is justified by the observation that ordinary polynomial identities arise as special cases of identities defined with additional algebraic structures. Further progress toward the classification of varieties of polynomial growth equipped with additional structure has been achieved in \cite{Dafne, Wesley20, Cota2, Misso, Tatiana, Plamen,  Maralice}.

In this paper, we focus on $G$-graded algebras endowed with a graded involution. In this context, Cota and Vieira \cite{Wesley3} classified all minimal varieties of quadratic codimension growth, including the unitary ones. Moreover, the authors in \cite{Malu} obtained the classification of unitary superalgebras with graded involution of quadratic codimension growth. In \cite{Maralice}, the authors presented the  classification of the varieties generated by finite-dimensional $\gi$-algebras with at most linear codimension growth.

Here we address the complete classification, up to equivalence, of unitary varieties $\mathcal{V}$ of $G$-graded algebras with graded involution of quadratic codimension growth, where $G$ is a finite group and $\mathcal{V}$ is generated by a finite-dimensional algebra. We establish a direct connection between the algebras generating minimal varieties and the nonzero multiplicities appearing in the decomposition of the proper cocharacter of $A$. As a consequence, we classify all unitary varieties with quadratic codimension growth, and we prove that each of them is obtained as a direct sum of algebras generating minimal varieties. The results established here are not only of intrinsic interest, but also constitute a significant step toward a comprehensive classification of varieties of quadratic codimension growth.

\section{Unitary $(G,*)$-varieties}

Let $G=\{g_1,g_2,\dots,g_k\}$ be a finite multiplicative group with unit element $g_1=1$ and $A$ an associative algebra over a field $F$ of characteristic zero.  
We say that $A$ is a {$G$-graded algebra} if there exist subspaces $A_g$, $g \in G$, such that 
\[
A = \bigoplus_{g \in G} A_g 
\quad \text{and} \quad 
A_g A_h \subseteq A_{gh}, \quad \text{for all } g,h \in G.
\]

Each subspace $A_g$ is called the {homogeneous component of degree}~$g$. Moreover, the support $\mbox{supp}(A)$ of a $G$-graded algebra $A$ is defined as the set of elements $g\in G$ such that $A_g\neq \{0\}$.  

We may observe that any algebra $A$ can be regarded as a $G$-graded algebra with the trivial grading, defined by $A_1 = A$ and $A_g = \{0\}$ for all $g \in G^{\times}$, where $G^{\times}$ denotes the set $ G - \{1\}$.

\begin{example}  
     Let $M_n(F)$ be the algebra of matrices of order $n$ and $\mathsf{g}=(g_1, \ldots, g_n) \in G^n$ be an arbitrary $n$-tuple of elements of $G$. We may use $\mathsf{g}$ to define a $G$-grading on $M_n(F)$ as follows: $(M_n(F))_{g}= \textnormal{span}_F\{e_{ij}\mid g_i^{-1}g_j=g\}, g\in G.$ This $G$-grading is called elementary $G$-grading induced by $\mathsf{g}$ and will also be considered in certain subalgebras of $M_n(F)$.
\end{example}

    We recall that an involution on $A$ is a linear map satisfying $(a^{*})^{*}=a$ and $(ab)^{*}=b^*a^*$, for all $ a,b\in A$, and an algebra endowed with an involution is called a $*$-algebra.

\begin{example}
    On the algebra $UT_n$ of upper triangular matrices of size $n$, we can consider the reflection involution $\rho$ defined by $e_{ij}^{\rho}=e_{n-j+1,\, n-i+1},\, 1\leq i\leq j\leq n,$ where $e_{ij}$ denotes the elementary matrix with $1$ in the $(i,j)$-entry and $0$ elsewhere. 
\end{example}

  Here we are interested in $G$-graded algebras endowed with an involution that is compatible with the grading, in the following sense.  

  \begin{definition}
      An involution $*$ on a $G$-graded algebra $A$ is said to be {graded} if $A_g^{*} = A_g$, for all $g \in G.$ A $G$-graded algebra $A$ equipped with a graded involution $*$ is called a {$(G,*)$-algebra}.
  \end{definition}

 Observe that if $A_gA_h\neq \{0\}$, for some $g,h\in G$, then we must have $gh=hg$.
    
    For a $\gi$-algebra $A$ one can further decompose
	$$A=\underset{g\in G}{\bigoplus}(A_{g}^+ +  A_{g}^-)$$ where $A_{g}^+= \{a\in A_g \mid a^*=a\}$ and $A_{g}^-= \{a\in A_g \mid a^*=-a\}$  are the homogeneous symmetric and homogeneous skew components of degree $g$, respectively. In this case,  we define its $\gi$-support as $$\mbox{supp}^*(A)= \{g^\epsilon \mid A_{g}^\epsilon \neq \{0\},\, g\in G , \epsilon \in \{+,-\} \}.$$

An isomorphism of $(G,*)$-algebras is an isomorphism $\varphi\colon A \to B$ that preserves both the grading and the involution, that is, $\varphi(A_g)=B_g$ for all $g\in G$ and $\varphi(a^*)=\varphi(a)^*$ for all $a\in A$. When such a map exists, we say that $A$ and $B$ are isomorphic as $(G,*)$-algebras.

    For all $g\in G$, consider $ X_g^{*}= \{x_{i,g}, x_{i,g}^{*}\mid g\in G, i\geq 1\}$ a countable set of variables and define $X= \bigcup_{g\in G} X_g^{*}$. Let $F\langle X\mid G, * \rangle$ be the free associative \Gstar algebra generated by $X$ over $F$, whose elements are called \Gstar polynomials. In the following, it will be convenient to consider symmetric and skew homogeneous variables as follows: consider $X^+={\bigcup_{g\in G} }X_g^+$ and $X^-={\bigcup_{g\in G} }X_g^-$,  where   $X_g^+= \{x_{i,g}^+:= x_{i,g}+x_{i,g}^*\mid  i\geq 1\}$ is the set of homogeneous symmetric variables of degree $g$ and  $X_g^-= \{x_{i,g}^-=x_{i,g}-x_{i,g}^*\mid  i\geq 1\}$ is the set of homogeneous skew variables of degree $g$. Then, we have $\mathcal{F}:=\Fal{X\mid G,*}= \Fal{X^+\cup X^-}$.
    For all $g\in G$, consider $\mathcal{F}_g$ the space of elements that have homogeneous degree $g$ and notice that $\mathcal{F}= \bigoplus_{g\in G} \mathcal{F}_{g}$ has a structure of \Gstar algebra.

    We use the notation ${x_{i,g}^\epsilon}$ to indicate a variable that can be either symmetric or skew of homogeneous degree $g$, i.e., $\epsilon \in \{+, -\}$.  
    
    In this context, we have the following definition. 
    
	\begin{definition}
		A \Gstar polynomial $f\in \mathcal{F}$ is a \Gstar identity of a $(G,*)$-algebra $A$ if 
		$\lambda(f)=0$ for any evaluation $\lambda$ of the variables of $f$ by elements of $A$ such that $\lambda(x_{i,g}^\varepsilon) \in A_g^\varepsilon$, for all $g\in G$. In this case, we write $f\equiv 0$ on $A$.
	\end{definition}

    We consider $\textnormal{Id}^{\sharp}(A)$  the set of all $\gi$-identities of $A$. Notice that $\textnormal{Id}^{\sharp}(A)$ is a $T_G^*$-ideal of $A$, i.e., an ideal invariant under all endomorphisms of $\mathcal{F}$ that preserve the grading and commute with the involution.
    
    For all $n\geq1$, we denote by $P_n^{\sharp}=\mbox{span}_F\{w_{\sigma(1)} \cdots w_{\sigma(n)}\mid \sigma\in S_n,\; w_i\in \{x_{i,g}^+,x_{i,g}^-\},\; g\in G \}$ the space of multilinear $\gi$-polynomials of degree $n$. In characteristic zero, it is well known that $\IdGstar{A}$ is generated by its multilinear $\gi$-identities. Thus, it makes sense to consider the quotient space $$P_n^\sharp(A)=\frac{P_n^\sharp}{P_n^\sharp \cap \textnormal{Id}^\sharp(A)},  \quad n\geq 1,$$ and define $c_n^{\sharp}(A) := \dim_F P_n^\sharp(A)$ as
   the $n$th $(G,*)$-codimension of $A$. 
   
   In the case where $A$ is a finite-dimensional $\gi$-algebra satisfying an ordinary polynomial identity, Oliveira, dos Santos and Vieira \cite{Lorena} showed that the sequence $c_n^\sharp(A),\,  n\geq 1$, either grows exponentially or is polynomially bounded. In the latter case, we say that $A$ has polynomial growth (of the sequence of $\gi$-codimensions).
    
   Recall that two \Gstar algebras $A$ and $B$ are  $T_G^*$-equivalent if $\textnormal{Id}^{\sharp} (A) = \textnormal{Id}^{\sharp}(B)$ and we write $A\sim_{T_G^*} B$. Moreover, we define $\mathcal{V}:=\textnormal{var}^{\sharp}(A)$ the $(G,*)$-variety generated by $A$, i.e., the class of all \Gstar algebras $B$ such that $\IdGstar{A}\subseteq \IdGstar{B}$.

    An important structural result about $\gi$-algebras with polynomial growth is given below.
    
    \begin{theorem}\label{estrutura} \cite{Maralice}
        Let $A$ be a finite-dimensional $\gi$-algebra over a field $F$ of characteristic zero. Then $c_n^\sharp (A)$, $n\geq 1$, is polynomially bounded if and only if $A \sim_{T_G^*} B$, where $B = B_1 \oplus \cdots \oplus B_m$, where $B_1,\ldots , B_m$ are finite-dimensional $\gi$-algebras over $F$ such that $\dim_F
B_i/J(B_i) \leq 1$ and $J(B_i)$ denotes the Jacobson radical of $B_i$, for all $i = 1, \ldots , m$.
    \end{theorem}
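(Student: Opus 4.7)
The plan is to handle the two implications separately, with the main work on the forward direction. For the reverse implication, suppose $A\sim_{T_G^*} B$ with $B=B_1\oplus\cdots\oplus B_m$ and $\dim_F B_i/J(B_i)\leq 1$ for each $i$. Each $J(B_i)$ is a nilpotent $\gi$-ideal and the semisimple quotient is at most one dimensional, so $B_i$ satisfies a Capelli-type $\gi$-identity that forces polynomial boundedness of $c_n^\sharp(B_i)$. The standard multinomial estimate
\[
c_n^\sharp(B_1\oplus\cdots\oplus B_m)\leq \sum_{n_1+\cdots+n_m=n}\binom{n}{n_1,\ldots,n_m}\prod_{i=1}^{m} c_{n_i}^\sharp(B_i)
\]
then gives polynomial boundedness of $c_n^\sharp(B)$, and hence of $c_n^\sharp(A)$, since this sequence is a $T_G^*$-invariant.

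For the forward implication, I would first invoke a $\gi$-version of the Wedderburn-Malcev theorem to write $A = B_{ss}+J(A)$, with $J(A)$ the Jacobson radical (automatically $\gi$-invariant) and $B_{ss}$ a maximal semisimple $\gi$-subalgebra. I then decompose $B_{ss}=\bar A_1\oplus\cdots\oplus\bar A_k$ into minimal $\gi$-ideals, using the classification of finite-dimensional $\gi$-simple algebras.

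The critical structural step, which I expect to be the main obstacle, is to extract from polynomial boundedness of $c_n^\sharp(A)$ two assertions. First, that each $\bar A_i$ has dimension one over $F$, carrying the trivial grading and involution; second, that any product of the form $\bar A_{i_1}J(A)\bar A_{i_2}J(A)\cdots\bar A_{i_s}$ with pairwise distinct indices $i_1,\ldots,i_s$ vanishes once $s$ exceeds a constant depending only on the nilpotency index of $J(A)$. For the first I would argue, case by case through the classification of $\gi$-simple algebras, that a summand of dimension at least two produces inside $A$ a $\gi$-subalgebra whose ordinary variety contains $UT_2$ or the Grassmann algebra, contradicting polynomial codimension growth. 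For the second, a long surviving product would yield, via the $S_n$-action on multilinear $\gi$-polynomials supported on it, a contribution to $c_n^\sharp(A)$ bounded below by $n^s$, again contradicting polynomial growth.

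With these two facts available, the concluding step is a Giambruno-Zaicev style reconstruction: to each $\bar A_i$ attach the portion of $J(A)$ it captures, forming a block $B_i$ with $\dim_F B_i/J(B_i)=1$, and collect any unattached nilpotent remainder as a purely radical block $B_0$. The vanishing of long cross-block products lets one replace, modulo $\IdGstar{A}$, every mixed evaluation by a block-respecting one, yielding the required $T_G^*$-equivalence $A\sim_{T_G^*} B_0\oplus B_1\oplus\cdots\oplus B_k$. The delicate point in this last step is to verify that no identity of the reconstructed direct sum is lost, which requires a careful analysis of how proper multilinear $\gi$-polynomials interact with the attachment procedure.
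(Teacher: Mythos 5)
The paper does not prove this statement at all: it is imported verbatim from \cite{Maralice}, so your attempt is to be measured against the known structural proof there (a \gi-version of the Giambruno--Zaicev characterization), whose broad outline you follow. However, two of your steps fail as written. In the ``if'' direction, the estimate you invoke is the Regev-type bound for \emph{tensor products}, not direct sums, and even if it were valid it could not yield polynomial boundedness, since $\sum_{n_1+\cdots+n_m=n}\binom{n}{n_1,\ldots,n_m}=m^n$ already grows exponentially. The correct (and much simpler) tool is that $\IdGstar{B_1\oplus\cdots\oplus B_m}=\bigcap_i\IdGstar{B_i}$, whence $c_n^\sharp(B)\le\sum_i c_n^\sharp(B_i)$. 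Likewise ``$B_i$ satisfies a Capelli-type \gi-identity, hence has polynomially bounded codimensions'' is not a valid inference: every finite-dimensional algebra satisfies Capelli identities, including $UT_2$, whose codimensions grow exponentially. What one actually uses is that $B_i$ is nilpotent or of the form $F+J(B_i)$ with $J(B_i)$ nilpotent, so that multilinear proper \gi-polynomials of large degree are identities of $B_i$ (their evaluations land in high powers of the radical), and then a formula of type (\ref{codimension}) gives a polynomial bound.

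In the ``only if'' direction the decisive facts are that each simple \gi-component is $F$ with trivial grading and involution and that $\bar A_iJ(A)\bar A_j=0$ for \emph{every} pair $i\neq j$; both of your substitutes for these are defective. Your second assertion, vanishing of mixed products once the number of distinct components exceeds a constant depending on the nilpotency index of $J(A)$, is automatically true (such a product lies in a high power of $J(A)$) and is too weak to run the reconstruction: to replace a mixed evaluation by a block-respecting one you need the vanishing already for $s=2$. Moreover your proposed justification, a lower bound $n^{s}$ for $c_n^\sharp(A)$, cannot produce a contradiction, because $s$ is bounded by the number of components while polynomial boundedness only excludes unbounded exponents; the genuine argument is that a single nonzero product $\bar A_iJ(A)\bar A_j$ with $i\neq j$ yields a $UT_2$-type \gi-subalgebra whose \gi-codimensions grow exponentially. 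Finally, reducing ``each simple component is one-dimensional with trivial structure'' to the ordinary Kemer criterion ($UT_2$ or $\mathcal{G}$ in the ordinary variety) fails: a \gi-simple component may be commutative of dimension at least two, for instance $F\oplus F$ with the exchange involution or a group-algebra component carried by the grading, and then no ordinary exponential growth appears; the contradiction must instead be extracted from the exponential growth of the graded-involution codimensions of such components, which is exactly how the cited source proceeds.
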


   In this paper, we are interested in unitary $\gi$-algebras. In this setting, proper $\gi$-polynomials play an important role. Recall that the commutator of length $n$ is defined inductively by $[x_{1}, \ldots , x_{n}] = [[x_{1}, \ldots , x_{n-1}], x_{n}],$ where $[x_{1},x_{2}] = x_{1}x_{2} - x_{2}x_{1}$. 
   A polynomial $f\in\mathcal{F}$ is called a proper $\gi$-polynomial if it is a linear combination of polynomials of type
\[
x_{i_1,g_2}^+\cdots x_{i_r,g_2}^+\cdots x^+_{j_1,g_k}\cdots x^+_{j_s,g_k}
x^-_{l_1,1}\cdots x^-_{l_t,1}\cdots x^-_{m_1,g_k}\cdots x^-_{m_u,g_k}
w_1\cdots w_v,
\]
where $w_1,\dots,w_v$ are long commutators in the variables of $X^+\cup X^-$. It is important to note that a proper $\gi$-polynomial consists of monomials in which symmetric variables of homogeneous degree $1$ appear only inside commutators.

One important aspect of the study of unitary $(G,*)$-algebras is that $\IdGstar{A}$ is generated by multilinear proper $\gi$-polynomials (see, for instance, \cite{Willer}).
We denote by $\Gamma^{\sharp}_n$ the space of proper multilinear $\gi$-polynomials of degree $n$ and define $\Gamma^{\sharp}_0=\spam_F\{1\}$.
The sequence of proper $\gi$-codimensions of $A$ is defined as
\[
\gamma^{\sharp}_n(A)=
\dim_F\frac{\Gamma^{\sharp}_n}{\Gamma^{\sharp}_n\cap\IdGstar{A}},\qquad n\geq 1.
\]

By \cite{Maralice}, the relation between the $\gi$-codimensions and the proper
$\gi$-codimensions of $A$ is given by
\begin{equation}\label{codimension}
c^{\sharp}_n(A)=\sum_{i=0}^n \binom{n}{i}\,\gamma^{\sharp}_i(A),\qquad n\ge 1.
\end{equation}

As we are concerned with unitary algebras with polynomial growth, the preceding result ensures the existence of an integer $t$ such that $\gamma_m^G(A)=0$ for every $m>t$. Therefore, in this case, we have 
$$c^{\sharp}_n(A)=\sum_{i=0}^t \binom{n}{i}\,\gamma^{\sharp}_i(A)= q n^t + q_1 n^{t-1} + \cdots = qn^{t}+\mathcal{O}(n^{t-1}) $$ which is a polynomial with rational coefficients. Moreover, the leading coefficient $q$ satisfies the following inequalities:
$$\frac{1}{t!} \le q \le \sum_{i=0}^{t}2^{t-i}|G|^{t-i}\frac{(-1)^{i}}{i!}.$$

Denote by $\Gamma_{({n_1}, \ldots , {n_{2k}})}$ the vector space of multilinear proper $(G,*)$-polynomials where the first $n_1$ variables are symmetric of homogeneous degree $g_1=1$, the next $n_2$ variables are skew of homogeneous degree $g_1=1$ and so on until the next $n_{2k-1}$ variables are symmetric of homogeneous degree $g_k$ and the last $n_{2k}$ variables are skew of homogeneous degree $g_k$. 

 In order to simplify the notation, we denote $({n_1}, \ldots , {n_{2k}})=({n_1}_{g_1^{+}},{n_{2}}_{g_1^-} ,\ldots , {n_{2k-1}}_{g_k^+}, {n_{{2k}}}_{{g_k}^-})$ where the zero values will be omitted. For instance, if $n=n_1+n_2+n_3+n_4+n_5+n_6$ where $n_1=n_5=1$, $n_2=n_3=n_6=0$ and $n_4=2$, then the $6$-tuple $(1,0,0,2,1,0)$ will be denoted by $(1_{1^+},2_{g_2^-}, 1_{g_3^+}).$

Note that the space $\Gamma_n^{\sharp}$ can be decomposed in the following way.

\begin{align}\label{Gamma_n}
    \Gamma_n^{\sharp}\cong \bigoplus_{n=n_1+\cdots+n_{2k}} \displaystyle \binom{n}{n_1, \ldots, n_{2k}}\Gamma_{({n_1}, \ldots , {n_{2k}})}
\end{align} where $\binom{n}{n_1,\ldots,n_{2k}}=\frac{n!}{n_1!\cdots n_{2k}!}$ denotes the multinomial coefficient.

Now, we consider a decomposition $n=n_1+\cdots+n_{2k}$ and define $\gamma_{n_1,\cdots,n_{2k}}(A) = \dim_F \Gamma_{n_1,\ldots,n_{2k}} (A)$, the proper $(n_1,\ldots,n_{2k})$-codimension, where
$$\Gamma_{n_1,\ldots,n_{2k}} (A)=\frac{\Gamma_{n_1,\ldots,n_{2k}}}{\Gamma_{n_1,\ldots,n_{2k}} \cap \Idd(A)}.$$
Consequently, we obtain

\begin{equation}\label{proper_codimension}
    \gamma_n^{\sharp}(A)=\displaystyle \sum _{n=n_1+\cdots+n_{2k}} \binom{n}{n_1,\ldots,n_{2k}}\gamma_{n_1,\ldots,n_{2k}}(A).
\end{equation}

It is well known that there exists a natural left action of $S_{n_1, \ldots,n_{2k}} := S_{n_1} \times \cdots \times S_{n_{2k}}$ on $P_{n_1, \ldots,n_{2k}}$ , where $S_{2i-1}$ acts by permuting the symmetric variables of homogeneous degree $g_{i}$ and $S_{n_{2i}}$ acts by permuting the skew variables of homogeneous degree $g_{i}$, for $1 \leq i \leq k$. It follows that $\Gamma_{n_1,\ldots,n_{2k}}$ is an $S_{n_1, \ldots,n_{2k}}$-submodule of $P_{n_1, \ldots,n_{2k}}$, and we consider $\chi(\Gamma_{n_1,\ldots,n_{2k}} )$ its character.
Since $\Gamma_{n_1,\ldots,n_{2k}} \cap \Idd(A)$ is invariant under this action, the space $\Gamma_{n_1,\ldots,n_{2k}} (A)$ inherits a structure of left $S_{n_1, \ldots,n_{2k}}$-module. We denote its character by $\pi_{n_1,\ldots,n_{2k}}(A)$, called the proper $(n_1,\ldots,n_{2k})$-cocharacter of $A$.
By complete reducibility, we can decompose $\pi_{n_1,\ldots,n_{2k}}(A)$ into irreducible characters as follows:

\begin{equation}\label{cocharacter}
    \pi_{n_1,\ldots,n_{2k}}(A)=\underset{(\lambda_1,\ldots, \lambda_{2k}) \vdash (n_1,\ldots,n_{2k})}{\sum} {{m}}_{\lambda_1,\ldots, \lambda_{2k}} \chi_{\lambda_1} \otimes \cdots \otimes \chi_{\lambda_{2k}}
\end{equation} where $(\lambda_1,\ldots, \lambda_{2k})$ is a multipartition of $ (n_1,\ldots,n_{2k})$ and $\chi_{\lambda_1} \otimes \cdots \otimes \chi_{\lambda_{2k}}$ is the irreducible $S_{n_1, \ldots,n_{2k}}$-character associated to such multipartition. The degree of the irreducible $S_{n_1, \ldots , n_{2k} }$ -character $\chi_{\lambda_1} \otimes \cdots \otimes \chi_{\lambda_{2k}}$ is given by $d_{\lambda_1} 
\cdots d_{\lambda_{2k}}$, where $d_{\lambda_i}$  is the degree of the irreducible $S_{n_i}$-character $\chi_{\lambda_i}$ given by the Hook Formula \cite[Theorem 3.10.2]{Sagan}. 

Now, we can express the $n$-th codimension of $A$ through the relations (\ref{codimension}), (\ref{proper_codimension}) and (\ref{cocharacter}) as follows 
\begin{equation}\label{codimension_cocharacters}
    c_n^\sharp(A)=\displaystyle\sum_{i=0}^{n}\,\,\,\, \displaystyle \sum _{i=n_1+\cdots+n_{2k}} \binom{n}{n_1,\ldots,n_{2k}}\pi_{n_1,\ldots,n_{2k}}(A)(1).
\end{equation}

 In order to determine the decomposition of the proper $(n_1, \ldots, n_{2k})$-cocharacters of $A$ we will make use of the representation theory of $GL_m$-modules, which provides an effective method to compute such multiplicities. In summary, it is well known that the corresponding irreducible $GL_m^{2k}$-modules are generated by the so called proper highest weight vectors (proper h.w.v.'s) $f_{\lambda_1,\ldots,\lambda_{2k}}$ associated to a multipartition $(\lambda_1, \ldots,\lambda_{2k}) \vdash (n_1,\ldots,n_{2k})$. We assume that the reader is familiar with this subject and we suggest \cite[Section 12.4]{Drensky} for more details.

For an explicit construction of such proper h.w.v.’s, the authors recommend \cite[Section 3]{Malu}, which treats the case of $(\mathbb{Z}_2,*)$-algebras. In the case of a $\gi$-algebra, this construction follows this theory step by step.  We present the main result on the computation of these multiplicities below. 

\begin{proposition}\label{prop:nonzero_multiplicities}
    Let $A$ be a unitary $\gi$-algebra with proper $(n_1,\ldots,n_{2k})$-cocharacter as in (\ref{cocharacter}). Then $m_{\lambda_1,\ldots, \lambda_{2k}}\neq 0$ if and only if there exists a proper h.w.v. $f_{\lambda_1,\ldots,\lambda_{2k}}$ associated to $(\lambda_1, \ldots,\lambda_{2k}) \vdash (n_1,\ldots,n_{2k})$ such that $f_{\lambda_1,\ldots,\lambda_{2k}} \notin \Idd(A)$. Moreover, $m_{\lambda_1,\ldots,\lambda_{2k}}$ is equal to the maximal number of proper h.w.v.'s associated to $(\lambda_1, \ldots,\lambda_{2k}) \vdash (n_1,\ldots,n_{2k})$ which are linearly independent modulo $\Idd(A)$.
\end{proposition}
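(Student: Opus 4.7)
The plan is to transfer the standard multiplicity computation via highest weight vectors from the ordinary (and $(\Z_2,*)$) setting to the $\gi$-setting, following the general machinery outlined in \cite[Section 12.4]{Drensky} and the construction carried out in \cite[Section 3]{Malu}. The key mechanism is the duality between the action of the symmetric product $S_{n_1,\ldots,n_{2k}}$ on multilinear proper polynomials and the action of $GL_m^{2k}$ on proper polynomials of multidegree $(n_1,\ldots,n_{2k})$ in $m$ variables of each of the $2k$ types, for sufficiently large $m$.

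First I would fix $m\geq \max\{n_1,\ldots,n_{2k}\}$ and consider, inside the relatively free $\gi$-algebra $\mathcal{F}/\Idd(A)$, the subspace $\Gamma_{n_1,\ldots,n_{2k}}^{(m)}(A)$ of (images of) proper polynomials of multidegree $(n_1,\ldots,n_{2k})$ in the variables $x_{i,g}^{\varepsilon}$, $1\leq i\leq m$. This is a left $GL_m^{2k}$-module under the natural action (each $GL_m$ acts on the span of the variables of a fixed type $g^{\varepsilon}$), and since $\Idd(A)$ is a $T_G^*$-ideal, the quotient inherits the action. By complete reducibility it decomposes as a direct sum of irreducible polynomial $GL_m^{2k}$-modules $V_{\lambda_1}\otimes\cdots\otimes V_{\lambda_{2k}}$, indexed by multipartitions $(\lambda_1,\ldots,\lambda_{2k})\vdash(n_1,\ldots,n_{2k})$.

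Next I would invoke the $GL$-$S_{n_1,\ldots,n_{2k}}$ duality (Schur--Weyl, applied componentwise) to identify the multiplicity of $V_{\lambda_1}\otimes\cdots\otimes V_{\lambda_{2k}}$ in $\Gamma_{n_1,\ldots,n_{2k}}^{(m)}(A)$ with the multiplicity $m_{\lambda_1,\ldots,\lambda_{2k}}$ of the irreducible $S_{n_1,\ldots,n_{2k}}$-character $\chi_{\lambda_1}\otimes\cdots\otimes\chi_{\lambda_{2k}}$ in the cocharacter $\pi_{n_1,\ldots,n_{2k}}(A)$ from (\ref{cocharacter}); here the properness of the polynomials is preserved throughout because the commutator structure (and the excluded symmetric-of-degree-$1$ variables outside commutators) is $GL_m^{2k}$-invariant. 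From the general theory, each irreducible summand is generated by a highest weight vector of the prescribed multi-weight, and the multiplicity equals the dimension of the space of highest weight vectors of that weight modulo $\Idd(A)$. In the proper setting, these h.w.v.'s are precisely the proper h.w.v.'s $f_{\lambda_1,\ldots,\lambda_{2k}}$ constructed in \cite{Malu} for $(\Z_2,*)$ and adapted verbatim to $\gi$. Therefore $m_{\lambda_1,\ldots,\lambda_{2k}}$ equals the maximal number of proper h.w.v.'s associated to $(\lambda_1,\ldots,\lambda_{2k})$ that are linearly independent modulo $\Idd(A)$, and in particular this multiplicity is nonzero iff at least one such h.w.v.\ fails to lie in $\Idd(A)$.

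The main (essentially only) obstacle is checking that the construction of proper h.w.v.'s from \cite{Malu} adapts correctly to the multigraded setting: one must verify that replacing the two families of $(\Z_2,*)$-variables by the $2k$ families $X_g^{\pm}$, $g\in G$, preserves both the properness condition and the property that the standard tableau-indexed alternating sums produce a basis (modulo the ideal) of the space of h.w.v.'s of the corresponding weight. Since the $GL_m^{2k}$-action factors through each component and the commutator structure is respected degree-by-degree, this verification is entirely formal and the proof reduces to citing the $(\Z_2,*)$-argument with the obvious relabeling.
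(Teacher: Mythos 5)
Your proposal is correct and follows essentially the same route as the paper, which states this proposition as a summary of the standard $GL_m^{2k}$--$S_{n_1,\ldots,n_{2k}}$ duality for proper polynomials, citing \cite[Section 12.4]{Drensky} for the general machinery and \cite[Section 3]{Malu} for the explicit construction of proper h.w.v.'s in the $(\mathbb{Z}_2,*)$ case, adapted verbatim to the $\gi$-setting. Your spelled-out Schur--Weyl argument and the formal verification that properness and the multigrading are preserved is exactly the content the paper delegates to those references.
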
 \black 

\begin{remark}\label{remark}
     Let $A$ be a unitary $(G,*)$-algebra with proper $(n_1,\ldots,n_{2k})$-cocharacter as in (\ref{cocharacter}) and let $B$ be a unitary $\gi$-algebra such that $B \in \varGstar(A)$ with $(n_1,\ldots,n_{2k})$-cocharacter given by
\begin{equation*}
		\pi_{n_1,\ldots,n_{2k}}(B)=\underset{(\lambda_1,\ldots, \lambda_{2k}) \vdash (n_1,\ldots,n_{2k})}{\sum} {\tilde{m}}_{\lambda_1,\ldots, \lambda_{2k}} \chi_{\lambda_1} \otimes \cdots \otimes \chi_{\lambda_{2k}}.
	\end{equation*}
    Since $\Gamma_{n_1,\ldots,n_{2k}} (B)$ can be embedded into $\Gamma_{n_1,\ldots,n_{2k}} (A)$ for all $n = n_1 + \cdots + n_{2k}$, we have $\tilde{m}_{\lambda_1,\ldots, \lambda_{2k}} \leq {m}_{\lambda_1,\ldots, \lambda_{2k}} $ for every multipartition $(\lambda_1,\ldots, \lambda_{2k}) \vdash (n_1,\ldots ,n_{2k})$.
\end{remark}
 
As the primary goal of this work is to investigate unitary varieties with quadratic growth, we conclude this section by explicitly presenting the decomposition of $\Gamma_n^\sharp$ into homogeneous subspaces for $n\in\{1,2\}$, as described in (\ref{Gamma_n}). 
\begin{equation}\label{decomposition_Gamma}
     \Gamma_1^{\sharp} \cong    \displaystyle \bigoplus_{s^\delta\in \mathcal{I}_1} \Gamma_{(1_{s^\delta})}, \quad \Gamma_2^{\sharp}  \cong 
\Gamma_{(2_{1^+})} \bigoplus_{s^\delta\in \mathcal{I}_1} (
\Gamma_{(2_{s^\delta})} \oplus
\Gamma_{(1_{1^+},1_{s^\delta})})\bigoplus_{\substack{ s^\delta \in \mathcal{I}_{1},t^\epsilon \in \mathcal{I}_2 \\ t^\epsilon\neq  s^\delta }}2
\Gamma_{(1_{s^\delta},1_{t^\epsilon}),}
\end{equation} where  $\mathcal{I}_1=\{1^-,g^+,g^- \ | \ g\in G^\times\}$ and $  \mathcal{I}_2=\mathcal{I}_1-\{1^-\}$.   Moreover, we exhibit a table with the information of each proper cocharacter associated with the previous decomposition. 

In the following, we recall that $y\circ z$ denotes the Jordan product $yz+zy$.

 \begin{longtable}{lllc}
\endfirsthead
\endhead
\endfoot
\endlastfoot
\toprule

\textbf{$\Gamma_{(n_1, \ldots, n_{2k})}$} 
& \textbf{proper $(n_1, \ldots, n_{2k})$-cocharacters} 
& \textbf{proper h.w.v's }
& \textbf{multiplicity} \\ 
\midrule

$\Gamma_{(1_{s^-})}$
& $\chi_{((1)_{s^-})}$
& $x^-_s$
& $1$ \\[2mm]

$\Gamma_{(1_{t^+})}$
& $\chi_{((1)_{t^+})}$
& $x^+_t$
& $1$ \\[2mm]

$\Gamma_{(2_{1^+})}$
& $\chi_{((1,1)_{1^+})}$
& $[x^+_{1,1},x^+_{2,1}]$
& $1$ \\[2mm]

$\Gamma_{(2_{t^+})}$
& $\chi_{((2)_{t^+})}+ \chi_{((1,1)_{t^+})}$
& $x^+_{1,t} \circ x^+_{2,t},\; [x^+_{1,t},x^+_{2,t}]$
& $1,\;1$ \\[2mm]

$\Gamma_{(2_{s^-})}$
& $\chi_{((2)_{s^-})}+ \chi_{((1,1)_{s^-})}$
& $x^-_{1,s} \circ x^-_{2,s},\; [x^-_{1,s},x^-_{2,s}]$
& $1,\;1$ \\[2mm]

$\Gamma_{(1_{1^+},1_{t^\epsilon})}$
& $\chi_{((1)_{1^+})} \otimes \chi_{((1)_{t^\epsilon})}$
& $[x^+_{1,1},x^\epsilon_{2,t}]$
& $1$ \\[2mm]

$\Gamma_{(1_{s^\delta},1_{t^\epsilon})}$
& $2\chi_{((1)_{s^\delta})}\otimes \chi_{((1)_{t^\epsilon})}$
& $[x^\delta_{1,s},x^\epsilon_{2,t}],\; x^\delta_{1,s}x^\epsilon_{2,t}$
& $2$ \\ [2mm]

\toprule

\caption{Proper $(n_1,\ldots,n_{2k})$-cocharacters of $\Gamma_{(n_1,\ldots, n_{2k})}$}
\label{tabela_mult}

\end{longtable}

\section{Minimal unitary $\gi$-varieties with quadratic growth}

Recall that a $(G,*)$-algebra $A$ generates a minimal variety of polynomial growth $n^t$ if $c_n^\sharp(A)\approx qn^t$, and for any proper subvariety $\mathcal{U}\subsetneq \textnormal{var}^\sharp(A)$ we have $c_n^\sharp(\mathcal{U})\approx q'n^p$ for some $p<t$. In \cite{Wesley3}, minimal $\gi$-varieties with quadratic codimension growth were classified. The algebras generating these varieties play an important role in the classification of unitary $(G,*)$-varieties of quadratic growth, as we will see at the end of this paper.

In this section, we present the $T_G^*$-ideals, the $\gi$-codimensions and the proper $(n_1,\ldots,n_{2k})$-cocharacters of unitary algebras generating minimal varieties.

 For $m \geq 2$ denote the $m\times m$ identity matrix by $I_{m}$ and let $E_1 = \sum\limits_{i = 1}^{m-1} e_{i,i+1} \in UT_{m}$, where $UT_m$ denotes the algebra of upper triangular matrices of size $m$. Define the commutative subalgebra of $UT_m$ given by
	$$C_{m}= \{\alpha I_m + \underset{1\leq i<m}{\sum} \alpha_i E_1^i\mid \alpha, \alpha_i \in F\}.$$

 For this algebra, we fix $g \in G$ and consider the only $G$-grading such that
    $$I_m \in (C_m)_1, \quad E_1^i \in (C_m)_ {g^i},  \quad \textnormal{for} \ i=1,\ldots,m-1$$
 and the nontrivial involution given by 
 \begin{equation}
     (\alpha I_{m} + \underset{1\leq i<m}{\sum} \alpha_i E_1^i)^*= \alpha I_{m} + \underset{1\leq i<m}{\sum} (-1)^i\alpha_i E_1^i.
 \end{equation}

    Since $C_m$ is a commutative algebra, we denote by $C_{m}^{g}$ the algebra $C_m$ endowed with the previous $G$-grading and trivial involution. Moreover, we consider $C_{m,*}^{g}$ as the algebra $C_m$ with the previous grading and the involution above.

\begin{lemma}\label{T-ideal_C_3} \cite{Wesley2, RN,Nascimento}   Consider $g,h,s\in G^\times$ with $|g|>2$, $|h|=2$ and $t\in \{1,g,g^2\}$. Then,
		\begin{enumerate}

             \item[1)] $c_n^{\sharp}(B_1)=1+n$, $c_n^{\sharp}(B_2)=1+2n+\displaystyle\binom{n}{2}$ and $c_n^{\sharp}(B_3)=1+n+\displaystyle\binom{n}{2}$, for all $B_1\in \{C_2^s, C_{2,*}^s\}$, $B_2\in \{C_3^g, C_{3,*}^g\}$ and $B_3\in \{C_3^h, C_{3,*}^h, C_{3,*}^1\}$.

             \item[2)] The nonzero proper $(n_1, \ldots, n_{2k})$-cocharacters of the previous algebras are given by Table \ref{cocharacterofc3}.

  \begin{longtable}{ll}
\endfirsthead
\endhead
\endfoot
\endlastfoot
\toprule

$C_{2}^{s}$
& $\chi_{((1)_{s^+})}$
 \\[2mm]

$C_{2,*}^{s}$
& $\chi_{((1)_{s^-})}$
 \\[2mm]

$C_{2,*}^{1}$
& $\chi_{((1)_{1^-})}$
 \\[2mm]

$C_{3}^{g}$
& $\chi_{((1)_{g^+})},\quad \chi_{((1)_{{(g^2)}^+})}, \quad \chi_{((2)_{g^+})}$ \\[2mm]

$C_{3}^{h}$
& $\chi_{((1)_{h^-})} , \quad \chi_{((2)_{h^-})}$ \\[2mm]

$C_{3,*}^{1}$
& $\chi_{((1)_{1^-})},\quad \chi_{((2)_{1^-})}$ \\[2mm]

$C_{3,*}^{h}$
& $\chi_{((1)_{h^+})}, \quad \chi_{((2)_{h^+})}$ \\[2mm]

$C_{3,*}^{g}$
& $\chi_{((1)_{g^-})},\quad  \chi_{((1)_{{(g^2)}^-})},\quad  \chi_{((2)_{g^-})}$ \\ [2mm]

\toprule

\caption{Proper  cocharacters of $C_2$ and $C_3$.}
\label{cocharacterofc3}

\end{longtable}

		\end{enumerate}
	\end{lemma}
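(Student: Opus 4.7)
The plan is to compute, for each of the eight algebras listed in the statement, the nonzero proper $(n_1,\ldots,n_{2k})$-cocharacter multiplicities via Proposition \ref{prop:nonzero_multiplicities}, and then recover the $\gi$-codimensions through formula (\ref{codimension}). The commutativity of $C_m$ is the crucial simplification: every proper h.w.v.\ from Table \ref{tabela_mult} that contains a commutator lies automatically in $\Idd(C_m^g)$ and $\Idd(C_{m,*}^g)$, so the only surviving candidates among those tabulated are the pure Jordan products $x^\epsilon \circ y^\epsilon$, the pure monomials $x^\delta y^\epsilon$, and the single-variable polynomials $x^\epsilon_s$.

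First I would determine the $\gi$-support of each algebra from the defining grading and involution. In $C_2^s$ the generators $I_2, E_1$ are both symmetric, of degrees $1$ and $s$; in $C_{2,*}^s$ the element $E_1$ becomes skew because $E_1^*=-E_1$. In $C_3^g$ with $|g|>2$ the generators $I_3, E_1, E_1^2$ are symmetric of degrees $1, g, g^2$; under the nontrivial involution of $C_{3,*}^g$, $E_1$ becomes skew while $E_1^2$ remains symmetric. The cases with $|h|=2$ and with trivial grading are analogous, with the additional feature that $h^2=1$ (or the trivial grading) collapses the $E_1^2$ contribution back into the degree-$1$ homogeneous component, so the effective support has only two entries.

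Next I would check non-vanishing of the surviving proper h.w.v.'s by direct evaluation. For the $C_2$ algebras, $E_1^2=0$ forces every product of two non-identity variables to vanish, so no h.w.v.\ of degree $\geq 2$ survives; this leaves one single-variable contribution in degree $1$, giving $\gamma_0^\sharp=1$, $\gamma_1^\sharp=1$ and $\gamma_i^\sharp=0$ for $i\geq 2$, which via (\ref{codimension}) yields $c_n^\sharp=1+n$. For the $C_3$ algebras, the relations $E_1^3=0$ and $E_1\cdot E_1^2=0$ forbid any product of three or more non-identity variables, as well as every mixed monomial of type $x_{g}\,x_{g^2}$. What remains in degree $2$ is the Jordan square $x^\epsilon_{1,g}\circ x^\epsilon_{2,g}$, which evaluates to $2E_1^2\neq 0$, together with the single-variable h.w.v.'s indexed by the support.

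Finally, the codimension is an immediate consequence. For $|g|>2$ the support contributes two degree-$1$ terms (for $g$ and $g^2$) and one degree-$2$ Jordan square, giving $(\gamma_0^\sharp,\gamma_1^\sharp,\gamma_2^\sharp,\ldots)=(1,2,1,0,\ldots)$ and hence $c_n^\sharp=1+2n+\binom{n}{2}$. In the remaining cases the support contributes one degree-$1$ term and one degree-$2$ Jordan square, giving $(1,1,1,0,\ldots)$ and $c_n^\sharp=1+n+\binom{n}{2}$. The main bookkeeping obstacle is verifying that \emph{no} mixed h.w.v.\ (for instance one associated to a multipartition mixing the $g$- and $g^2$-components in $C_3^g$ or $C_{3,*}^g$, or one involving an index in $1^+$) survives modulo $\Idd(A)$; both the monomial and commutator candidates in Table \ref{tabela_mult} vanish, because $E_1\cdot E_1^2=E_1^3=0$ on one hand, and $C_m$ is commutative on the other.
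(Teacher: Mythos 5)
Your proposal is correct in substance but takes a genuinely more self-contained route than the paper. The paper does not reprove item 1) at all: the codimension formulas are quoted from \cite{Wesley2, RN, Nascimento}. For item 2) it only exhibits, via Proposition \ref{prop:nonzero_multiplicities}, enough non-identity proper h.w.v.'s (the support elements in degree one and a Jordan square in degree two) to force the listed multiplicities to be nonzero, and then invokes the quoted exact value of $c_n^\sharp$ through (\ref{codimension_cocharacters}) to conclude that no further multiplicity can be nonzero. You instead compute the complete multiplicity list directly from the structure of $C_m$: commutator h.w.v.'s die by commutativity, and all remaining candidates of degree at least $3$, as well as the mixed and higher-power degree-$2$ monomials, die because $E_1^2=0$ in $C_2$ and $E_1^3=0$ in $C_3$ (so $J^m=0$); you then recover item 1) from (\ref{codimension}). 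What your route buys is independence from the cited codimension values; what it costs is the exhaustive verification that every other candidate in Table \ref{tabela_mult} vanishes, which you do carry out (commutativity on one side, radical nilpotency on the other), so the argument closes.

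One bookkeeping point deserves attention. With the involution as defined in the paper, $(E_1^i)^*=(-1)^i E_1^i$, so, as you correctly observe, $E_1^2$ is symmetric in $C_{3,*}^g$; your computation therefore produces the degree-one cocharacter $\chi_{((1)_{(g^2)^+})}$, whereas Table \ref{cocharacterofc3} (and the paper's proof, which asserts $\supp^*(C_{3,*}^g)=\{1^+,g^-,(g^2)^-\}$) records $\chi_{((1)_{(g^2)^-})}$; the same convention issue would exchange the $h^+$ and $h^-$ labels in the rows for $C_3^h$ (trivial involution) and $C_{3,*}^h$. Either the table follows a convention for $C_{m,*}^g$ different from the displayed involution formula, or those entries are misprints; in any case your direct computation will not literally reproduce the stated table until this is reconciled, although it does reproduce all the codimension formulas of item 1) and the multiplicities up to these symmetric/skew labels. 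The paper's proof does not surface this issue because it imports the support and the codimensions from \cite{Wesley2} rather than recomputing them from the definition.
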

\begin{proof} 

The first item follows from the results in \cite{Wesley2, RN,Nascimento}.
In order to prove the item $2)$, we make use of Proposition \ref{prop:nonzero_multiplicities}. For instance, assume that $|g|>2$ and consider the $\gi$-algebra $C_{3,*}^g$. Since $C_{3,*}^g$ has quadratic codimension growth then $\Gamma^\sharp_ n \subset \Idd(C_{3,*}^g)$ for all $n\geq 3$. Furthermore, since $\supp^*(C_{3,*}^g)=\{1^+,g^-,{(g^2)}^-\}$, by Proposition \ref{prop:nonzero_multiplicities}, we have that $m_{((1)_{g^-})}$ and $m_{((1)_{{g^2}^-})}$ are nonzero. For $n=2$, we note that $x_{1,g}^- \circ x_{2,g}^-\notin  \textnormal{Id}^\sharp(C_{3,*}^g)$ and so, by Proposition \ref{prop:nonzero_multiplicities}, $m_{((2)_{g^-})}
\neq 0$. Finally, by (\ref{codimension_cocharacters}) we obtain
    $$c_n^\sharp(C_{3,*}^g)\geq 1+2n +\binom{n}{2}.$$
    Since, by \cite{Wesley2}, we have $c_n^\sharp(C_{3,*}^g)=1+2n +\binom{n}{2}$, then the only nonzero proper $(n_1, \ldots, n_{2k})$-cocharacters of 
    $C_{3,*}^g$ are $\chi_{((1)_{g^-})}$, $\chi_{((1)_{{g^2}^-})}$ and $\chi_{((2)_{g^-})}$.

The other items are proved similarly.
\end{proof}
  
  Throughout this section, we present several tables of this form, and in each case the proof follows a similar pattern: we exhibit the corresponding proper highest weight vector and verify that it is not an identity of the algebra.

For $m\geq 2$, consider $E = \sum\limits_{i = 2}^{m-1} e_{i,i+1} + e_{2m-i,2m-i+1} \in UT_{2m}$ and define the following subalgebras of $UT_{2m}$	$$N_m = \mbox{span}_F \{I_{2m}, E, \ldots , E^{m-2}; e_{12} - e_{2m-1,2m}, e_{13}, \ldots , e_{1m}, e_{m+1,2m}, e_{m+2,2m}, \ldots , e_{2m-2,2m}\},$$
		$$ U_m = \mbox{span}_F \{ I_{2m}, E, \ldots , E^{m-2}; e_{12} + e_{2m-1,2m}, e_{13}, \ldots , e_{1m}, e_{m+1,2m}, e_{m+2,2m}, \ldots , e_{2m-2,2m}\}.$$
    
   Let $g \in G$ and consider $N_{3,*}^ {g}$ and $ U_{3,*}^ {g}$ as the algebras $N_3$ and $U_3$, respectively, with elementary $G$-grading induced by $(1, g,g,1,1,g)$ and reflection involution.

From now on, the element $r\in G$ will be used exclusively to denote an element outside the support of the grading, that is, $r\notin \supp(A)$.

\begin{lemma} \label{T-ideal_N_e_U} \cite{Wesley2,Nascimento} For $g\in G^\times$ we have
    
		\begin{enumerate}    
            \item[1)] $\textnormal{Id}^{\sharp}(U_{3,*}^ {g})=\langle x_{1,1}^-, x_{1,g}x_{2,g}, [x_{1,1}^+,x_{2,1}^-], x_{1,r}^\epsilon \rangle_{T_G^*}$ and  $\textnormal{Id}^{\sharp}(N_{3,*}^ {g})=\langle x_{1,1}^-, x_{1,g}x_{2,g}, [x_{1,g}^+,{x_{1,1}^+}], x_{1,r}^\epsilon \rangle_{T_G^*}$. 
            
             \item[2)] $c_n^{\sharp}(U_{3,*}^1)=1+n+\displaystyle \binom{n}{2}$, $c_n^{\sharp}(N_{3,*}^{1})=1+n+\displaystyle2\binom{n}{2}$ and $c_n^{\sharp}(B)=1+ 2n+ 2\displaystyle\binom{n}{2}$, for all $B\in \{U_{3,*}^{ {g}},N_{3,*}^{ {g}} \}.$

             \item[3)] The nonzero proper $(n_1, \ldots, n_{2k})$-cocharacters of the previous algebras are given by Table \ref{table_N3_U3}.

             \begin{longtable}{ll}
\endfirsthead
\endhead
\endfoot
\endlastfoot

\toprule

$U_{3,*}^{1}$
& $\chi_{((1)_{1^-})},\quad \chi_{((1,1)_{1^+})}$ \\[2mm]

$U_{3,*}^{g}$
& $\chi_{((1)_{g^+})},\quad  \chi_{((1)_{g^-})},\quad \chi_{((1)_{1^+})}\otimes \chi_{((1)_{g^+})}$ \\[2mm]

$N_{3,*}^{1}$
& $\chi_{((1)_{1^-})},\quad \chi_{((1)_{1^+})}\otimes \chi_{((1)_{1^-})}$ \\[2mm]

$N_{3,*}^{g}$
& $\chi_{((1)_{1^-})},\quad \chi_{((1)_{g^+})},\quad \chi_{((1)_{1^+})}\otimes \chi_{((1)_{g^-})}$ \\ [2mm] \toprule

\caption{Proper cocharacters of $U_3$ and $N_3$.}
\label{table_N3_U3}

\end{longtable}
		\end{enumerate}
\end{lemma}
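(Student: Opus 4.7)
The plan is to derive parts (1) and (2) from the cited works \cite{Wesley2,Nascimento} and then establish part (3) by following the template of the proof of Lemma \ref{T-ideal_C_3}. For each of the four algebras $A \in \{U_{3,*}^{1},U_{3,*}^{g},N_{3,*}^{1},N_{3,*}^{g}\}$, the argument will split into three steps: (a) shrink the list of candidate multipartitions whose associated proper highest weight vectors have a chance of surviving modulo $\Idd(A)$; (b) exhibit explicitly one such h.w.v.\ for each multipartition listed in Table \ref{table_N3_U3} and verify by direct substitution that it is not an identity of $A$; (c) invoke the codimension formula (\ref{codimension_cocharacters}) together with the value of $c_n^\sharp(A)$ furnished by part (2) to deduce that no further multiplicity can be nonzero.

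Step (a) is drastically reduced by two observations. Since $c_n^\sharp(A)$ is quadratic, formula (\ref{codimension_cocharacters}) forces the proper codimensions $\gamma_i^\sharp(A)$ to vanish for every $i\geq 3$, hence only multipartitions of total size at most $2$ can contribute. Moreover, the generators of $\Idd(A)$ listed in part (1), together with the automatic identities $x_{1,r}^\epsilon$ for every $r\notin\supp(A)$, already kill every candidate h.w.v.\ from Table \ref{tabela_mult} except those displayed in Table \ref{table_N3_U3}. In particular, the commutator relations $[x_{1,1}^+,x_{2,1}^-]\equiv 0$ on $U_{3,*}^g$ and $[x_{1,g}^+,x_{1,1}^+]\equiv 0$ on $N_{3,*}^g$ are what forbid the apparently symmetric mixed-degree cocharacters that would otherwise appear by analogy with the other algebra.

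For step (b) the verification is uniform: given the explicit bases of $U_3$ and $N_3$ inside $UT_{6}$, together with the prescribed elementary grading induced by $(1,g,g,1,1,g)$ and the reflection involution, each remaining h.w.v.\ admits an evaluation on basis matrices that is nonzero in the relevant homogeneous symmetric or skew component. For instance, on $U_{3,*}^g$ the h.w.v.\ $[x_{1,1}^+,x_{2,g}^+]$ takes a nonzero value when its arguments are evaluated at suitable basis matrices, while on $N_{3,*}^g$ the h.w.v.\ $[x_{1,1}^+,x_{2,g}^-]$ is nonzero on the skew $g$-component. The one-variable h.w.v.'s listed in the first column of Table \ref{table_N3_U3} are handled trivially by evaluating on a nonzero element of the corresponding homogeneous $*$-component of $A$.

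Finally, step (c) is a dimension count: summing $\binom{n}{n_1,\ldots,n_{2k}}\,d_{\lambda_1}\cdots d_{\lambda_{2k}}$ (every factor equals $1$ at these low degrees) over the multipartitions listed in Table \ref{table_N3_U3} and inserting the result into (\ref{codimension_cocharacters}) reproduces exactly the closed form of $c_n^\sharp(A)$ stated in part (2), leaving no room for any extra irreducible constituent. The main obstacle will be the bookkeeping for $N_{3,*}^g$, where symmetric variables of degree $g$ and skew variables of degree $1$ coexist; one must carefully check that each possible mixed-degree commutator either appears in the claimed decomposition or is eliminated by $[x_{1,g}^+,x_{1,1}^+]\equiv 0$, so that the lower bound from step (b) saturates the upper bound imposed by part (2).
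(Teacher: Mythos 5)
Your overall plan for item (3) is the paper's own template (the one spelled out for Lemma \ref{T-ideal_C_3} and declared to apply to every subsequent table): exhibit, for each listed multipartition, a proper h.w.v.\ that is not an identity, and then saturate against the closed form of $c_n^{\sharp}$ from item (2) via (\ref{codimension_cocharacters}). Your steps (b) and (c) alone constitute that complete argument, since once each listed multiplicity is at least $1$ and the resulting lower bound already equals the known codimension, no further constituent can occur and no listed multiplicity can exceed the bound of Table \ref{tabela_mult}.

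Step (a), however, is both unnecessary and, as formulated, incorrect. A $T_G^*$-consequence must preserve the homogeneous degree of the variables, so the listed generator $[x_{1,1}^+,x_{2,1}^-]$ of $\IdGstar{U_{3,*}^{g}}$ (a \emph{degree-$1$} skew variable) does not eliminate the candidate h.w.v.\ $[x_{1,1}^+,x_{2,g}^-]$ (a \emph{degree-$g$} skew variable). Concretely, $N_{3,*}^{g}$ satisfies all four generators listed in item (1) for $U_{3,*}^{g}$, while $[x_{1,1}^+,x_{2,g}^-]$ is not an identity of $N_{3,*}^{g}$ (evaluate $x_{1,1}^+\mapsto e_{23}+e_{45}$ and $x_{2,g}^-\mapsto e_{12}-e_{56}$), so those generators cannot ``kill every candidate except those displayed''; what actually eliminates $\chi_{((1)_{1^+})}\otimes\chi_{((1)_{g^-})}$ on $U_{3,*}^{g}$ is the direct computation $[e_{23}+e_{45},\,e_{13}-e_{46}]=0$, or simply the saturation in your step (c). Relatedly, your claim that the one-variable rows are handled ``trivially by evaluating on a nonzero element of the corresponding homogeneous $*$-component'' breaks down for the entry $\chi_{((1)_{1^-})}$ in the $N_{3,*}^{g}$ row: with the grading induced by $(1,g,g,1,1,g)$ and the reflection involution one has $(N_{3,*}^{g})_1^-=\{0\}$ (consistently, $x_{1,1}^-$ appears in item (1) as an identity of $N_{3,*}^{g}$), and the surviving degree-one constituents are $\chi_{((1)_{g^+})}$ and $\chi_{((1)_{g^-})}$; this is an inconsistency between items (1) and (3) of the statement itself rather than your error, but your step (b) must address it rather than declare it trivial, since the codimension count in step (c) cannot distinguish the two readings.
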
 \vspace{-0.6cm}

For $m\geq 2$, consider  the subalgebra $\mathcal{G}_m=\langle 1,e_1,\ldots, e_m \mid e_ie_j=-e_je_i  \rangle$ of the infinite-dimensional Grassmann algebra. For this algebra, we consider the involutions $\psi, \tau$ and $\gamma$ defined, respectively, by	\begin{align*}
		\psi( e_i)=e_i,\,\,\,\,\, \tau( e_i)=-e_i,\,\,\,\,\, \gamma( e_i)=(-1)^ie_i,\mbox{ for all }i=1,\ldots, m.
	\end{align*}

    For $g,h\in G$, we denote by $\mathcal{G}_{2,*}^{g,h}$ the algebra $\mathcal{G}_2$ with involution $*\in \{\psi, \tau, \gamma\}$ and the $G$-grading determined by $$ 1\in (\mathcal{G}_{2,*}^{g,h})_1,\, e_1\in (\mathcal{G}_{2,*}^{g,h})_g ,\,  e_2\in (\mathcal{G}_{2,*}^{g,h})_h\mbox{ and } \,e_1e_2\in (\mathcal{G}_{2,*}^{g,h})_{gh} .$$

\begin{lemma}\label{T-ideal_G_2tau} \cite{Wesley2,Nascimento} For $g,h, s,u \in G$, $|g|>2$, $|h|=2$, $s,u \neq 1$ and  $su\neq1$ we have
\begin{itemize}       
    \item[1)] $\textnormal{Id}^{\sharp}(\mathcal{G}_{2,\tau}^{g,g})=\langle x_{1,1}^-, x_{1,g}^+, x_{1,g^2}^+, x_{1,r}^\epsilon  \rangle_{T_G^*}$.  
    
    \item[2)]$c_n^{\sharp}(B_1)=1+2n+\displaystyle\binom{n}{2}$, $c_n^{\sharp}(B_2)=1+3n+2\displaystyle\binom{n}{2}$, $c_n^{\sharp}(\mathcal{G}_{2,\tau}^{1,s} )=1+2n+2\displaystyle\binom{n}{2}$, $c_n^{\sharp}(\mathcal{G}_{2,\tau}^{1,1})=1+n+\displaystyle\binom{n}{2}$, for all $B_1\in \{ \mathcal{G}_{2,\tau}^{h,h}, \mathcal{G}_{2,\tau}^{g,g}\}$ and $B_2\in \{ \mathcal{G}_{2,\tau}^{s,u},\mathcal{G}_{2,\tau}^{g,g^{-1}} \}$. 

    \item[3)] The nonzero proper $(n_1, \ldots, n_{2k})$-cocharacters of the previous algebras are given by  Table \ref{tableG_2tau}.

\begin{longtable}{ll}
\endfirsthead
\endhead
\endfoot
\endlastfoot

\toprule 
$\mathcal{G}_{2,\tau}^{1,1}$
& $\chi_{((1)_{1^-})},\quad \chi_{((1,1)_{1^-})}$  \\[2mm]

$\mathcal{G}_{2,\tau}^{1,s}$
& $\chi_{((1)_{1^-})},\quad  \chi_{((1)_{s^-})}, \quad \chi_{((1)_{1^-})} \otimes \chi_{((1)_{s^-})}$ \\[2mm]

$\mathcal{G}_{2,\tau}^{h,h}$
& $\chi_{((1)_{1^-})},\quad  \chi_{((1)_{h^-})},\quad \chi_{((1,1)_{h^-})}$ \\[2mm]

$\mathcal{G}_{2,\tau}^{g,g}$
& $\chi_{((1)_{g^-})},\quad \chi_{((1)_{(g^2)^-})},\quad  \chi_{((1,1)_{g^-})}$ \\[2mm]

$\mathcal{G}_{2,\tau}^{g,g^{-1}}$
& $\chi_{((1)_{1^-})},\quad  \chi_{((1)_{g^-})},\quad \chi_{((1)_{(g^{-1})^-})},\quad \chi_{((1)_{g^-})} \otimes \chi_{((1)_{(g^{-1})^-})}$ \\[2mm]

$\mathcal{G}_{2,\tau}^{s,u}$
& $\chi_{((1)_{s^-})},\quad \chi_{((1)_{u^-})},\quad  \chi_{((1)_{su^-})},\quad \chi_{((1)_{s^-})} \otimes \chi_{((1)_{u^-})}$ \\ [2mm] \toprule

\caption{Proper cocharacters of $\mathcal{G}_{2}$ with $\tau$ involution.}
\label{tableG_2tau}
\end{longtable}
    \end{itemize}
\end{lemma}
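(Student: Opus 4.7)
My plan follows the template of the proof of Lemma \ref{T-ideal_C_3}. Items 1) and 2) are quoted from \cite{Wesley2, Nascimento}, so the substantive task is item 3): determining the nonzero multiplicities in each proper cocharacter. The strategy is to use Proposition \ref{prop:nonzero_multiplicities} to obtain lower bounds on the multiplicities listed in Table \ref{tableG_2tau} by exhibiting proper highest weight vectors that are not identities of the corresponding algebra, and then to combine (\ref{codimension_cocharacters}) with the codimensions in item 2) to upgrade those bounds to equalities and to rule out any unlisted character.

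First I will record the graded involutive structure common to all six sub-cases. Since $\tau(e_i) = -e_i$, both $e_1$ and $e_2$ are skew, and $\tau(e_1 e_2) = \tau(e_2)\tau(e_1) = e_2 e_1 = -e_1 e_2$, so $e_1 e_2$ is also skew, of degree $gh$. Hence $\supp^*(\mathcal{G}_{2,\tau}^{g,h}) \subseteq \{1^+, g^-, h^-, (gh)^-\}$, with collapses depending on whether $g$, $h$, or $gh$ equals $1$; these collapses account for the six cases in the statement. The Grassmann anticommutation $e_i e_j = -e_j e_i$ furthermore yields the $\gi$-identity $x^-_{1,s} \circ x^-_{2,u} \equiv 0$ on $\mathcal{G}_{2,\tau}^{g,h}$ for the relevant skew degrees, since the evaluation $(e_1, e_2)$ gives $e_1 e_2 + e_2 e_1 = 0$. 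This identity will be crucial below.

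Next, for each row of Table \ref{tableG_2tau} I will evaluate the standard proper h.w.v.\ at the natural skew generators to certify nonvanishing. Linear characters $\chi_{((1)_{s^-})}$ are witnessed by $x^-_s \mapsto e_i$ or $e_1 e_2$ according to the degree $s$. Characters of type $\chi_{((1,1)_{s^-})}$ (occurring exactly when two Grassmann generators share a homogeneous degree) are witnessed by $[x^-_{1,s}, x^-_{2,s}] \mapsto 2 e_1 e_2 \neq 0$. Mixed-type characters $\chi_{((1)_{s^-})} \otimes \chi_{((1)_{u^-})}$ are witnessed by $[x^-_{1,s}, x^-_{2,u}] \mapsto 2 e_1 e_2 \neq 0$; note that the generic multiplicity $2$ allowed by (\ref{decomposition_Gamma}) is reduced to $1$ here because the identity $x^-_{1,s} \circ x^-_{2,u} \equiv 0$ forces the two generic h.w.v.'s (commutator and product) to be dependent modulo $\Idd$.

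Finally, summing the verified contributions via (\ref{codimension_cocharacters}) produces a lower bound for $c_n^\sharp$ that already matches the value stated in item 2); equality then forces every other multiplicity to vanish and the tabulated ones to be exactly as claimed. I expect the only real obstacle to be bookkeeping: carefully tracking which of the four elements of $\{1^+, g^-, h^-, (gh)^-\}$ coincide in each sub-case (for instance $gh = 1$ in $\mathcal{G}_{2,\tau}^{g,g^{-1}}$, or $g^2 = 1$ in $\mathcal{G}_{2,\tau}^{h,h}$ with $|h|=2$), so that no character is double-counted and none is missed.
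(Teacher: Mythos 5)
Your proposal is correct and follows essentially the same route as the paper: items 1) and 2) are quoted from the cited references, and item 3) is settled by exhibiting proper highest weight vectors that are non-identities (Proposition \ref{prop:nonzero_multiplicities}) and then matching the resulting lower bound for $c_n^{\sharp}$ against the exact values of item 2) via (\ref{codimension_cocharacters}) — precisely the pattern the paper announces after Lemma \ref{T-ideal_C_3}. One cosmetic caveat: the identity $x_{1,s}^-\circ x_{2,u}^-\equiv 0$ cannot be certified by the single evaluation $(e_1,e_2)$; it holds because any two elements of $\mathrm{span}\{e_1,e_2,e_1e_2\}$ have zero Jordan product, and in any case this step is not strictly needed once the codimension count forces the mixed multiplicities to equal $1$.
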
 \vspace{-0.6cm}

\begin{lemma}\label{T-ideal_G_2gamma} \cite{Wesley2,Nascimento} For $g,h, s,u \in G$, $|g|>2$, $|h|=2$, $s,u \neq 1$ and  $su\neq1$ we have
\begin{itemize}   

\item[1)] $\textnormal{Id}^{\sharp}(\mathcal{G}_{2,\psi}^{g,g})=\langle x_{1,1}^-, x_{1,g}^-, x_{1,g^2}^+, x_{1,r}^\epsilon  \rangle_{T_G^*}.$ 

    \item[2)] $c_n^{\sharp}(B_1)=1+2n+2\displaystyle\binom{n}{2}$,   $c_n^{\sharp}(B_2)=1+3n+2\displaystyle\binom{n}{2}$ and $c_n^\sharp(B_3)=1+2n+\displaystyle\binom{n}{2}$, for all $B_1\in \{\mathcal{G}_{2,\gamma}^{1,s}, \mathcal{G}_{2,\gamma}^{h,h},  \mathcal{G}_{2,\gamma}^{g,g^{-1}} \}$, $B_2\in \{\mathcal{G}_{2,\gamma}^{g,g}, \mathcal{G}_{2,\gamma}^{s,u},\mathcal{G}_{2,\psi}^{g,g^{-1}}, \mathcal{G}_{2,\psi}^{s,u} \}$ and $B_3\in \{ \mathcal{G}_{2,\psi}^{g,g},  \mathcal{G}_{2,\psi}^{h,h}\}$.

    \item[3)] The nonzero proper $(n_1, \ldots, n_{2k})$-cocharacters  are given by Table \ref{tableG_2gamma_psi}.

\begin{longtable}{ll}
\endfirsthead
\endhead
\endfoot
\endlastfoot

\toprule

$\mathcal{G}_{2,\gamma}^{1,s}$
& $\chi_{((1)_{1^-})},\quad  \chi_{((1)_{s^+})},\quad \chi_{((1)_{1^-})} \otimes \chi_{((1)_{s^+})}$ \\[2mm]

$\mathcal{G}_{2,\gamma}^{h,h}$
& $\chi_{((1)_{h^+})},\quad  \chi_{((1)_{h^-})},\quad \chi_{((1)_{h^+})} \otimes \chi_{((1)_{h^-})}$ \\[2mm]

$\mathcal{G}_{2,\gamma}^{g,g}$
& $\chi_{((1)_{g^+})},\quad  \chi_{((1)_{g^-})},\quad  \chi_{((1)_{(g^2)^+})},\quad \chi_{((1)_{g^+})} \otimes \chi_{((1)_{g^-})}$ \\[2mm]

$\mathcal{G}_{2,\gamma}^{g,g^{-1}}$
& $\chi_{((1)_{g^-})},\quad  \chi_{((1)_{(g^{-1})^+})},\quad \chi_{((1)_{g^-})} \otimes \chi_{((1)_{(g^{-1})^+})}$ \\[2mm]

$\mathcal{G}_{2,\gamma}^{s,u}$
& $\chi_{((1)_{s^-})},\quad  \chi_{((1)_{u^+})},\quad  \chi_{((1)_{su^+})},\quad \chi_{((1)_{s^-})} \otimes \chi_{((1)_{u^+})}$ \\[2mm]

 $\mathcal{G}_{2,\psi}^{h,h}$  
& $\chi_{((1)_{1^-})},\quad  \chi_{((1)_{h^+})},\quad \chi_{((1,1)_{h^+})}$ \\[2mm]

$\mathcal{G}_{2,\psi}^{g,g}$
& $\chi_{((1)_{g^+})},\quad \chi_{((1)_{(g^2)^-})},\quad \chi_{((1,1)_{g^+})}$ \\[2mm]

$\mathcal{G}_{2,\psi}^{g,g^{-1}}$
& $\chi_{((1)_{1^-})},\quad  \chi_{((1)_{g^+})},\quad  \chi_{((1)_{(g^{-1})^+})},\quad \chi_{((1)_{g^+})} \otimes \chi_{((1)_{(g^{-1})^+})}$ \\[2mm]

$\mathcal{G}_{2,\psi}^{s,u}$
& $\chi_{((1)_{u^+})},\quad  \chi_{((1)_{s^+})},\quad \chi_{((1)_{su^-})},\quad \chi_{((1)_{s^+})} \otimes \chi_{((1)_{u^+})}$ \\[2mm] \toprule

\caption{Proper cocharacters of $\mathcal{G}_{2}$ with $\gamma$ and $\psi$ involutions.}
\label{tableG_2gamma_psi}

\end{longtable}
    \end{itemize}
\end{lemma}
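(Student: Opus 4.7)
The plan is to reduce items 1) and 2) to the cited references \cite{Wesley2, Nascimento}, so the real work is to establish item 3). The strategy parallels the proof of Lemma \ref{T-ideal_C_3}: for each algebra $B$ in the list, apply Proposition \ref{prop:nonzero_multiplicities} together with the codimension formula of item 2) to pin down the nonzero proper $(n_1,\ldots,n_{2k})$-cocharacters and then verify, via the identity (\ref{codimension_cocharacters}), that no other multiplicity can occur.

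I would first record the $(G,*)$-support of each algebra $B$ by decomposing the homogeneous components $\{1,e_1,e_2,e_1e_2\}$ into symmetric and skew parts according to the involution. For instance, under $\gamma$ we have $\gamma(e_1)=-e_1$ and $\gamma(e_2)=e_2$, so for $\mathcal{G}_{2,\gamma}^{1,s}$ we get $1\in B_1^+$, $e_1\in B_1^-$, $e_2\in B_s^+$ and $e_1e_2\in B_s^-$; whereas under $\psi$ we have $\psi(e_1)=e_1$, $\psi(e_2)=e_2$, forcing $e_1e_2$ to be skew. This computation immediately produces the one-variable cocharacters appearing for each row of Table \ref{tableG_2gamma_psi}, through Proposition \ref{prop:nonzero_multiplicities} applied to the degree-one proper h.w.v.'s $x^{\varepsilon}_{s}$ listed in Table \ref{tabela_mult}.

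For $n=2$, I would run through the candidate multipartitions and, for each one claimed to have nonzero multiplicity, exhibit the explicit proper h.w.v. from Table \ref{tabela_mult} and check that a suitable evaluation on $\{1,e_1,e_2,e_1e_2\}$ is nonzero in $B$. For example, in $\mathcal{G}_{2,\gamma}^{g,g}$ with $|g|>2$, the h.w.v. $[x^+_{1,g},x^-_{2,g}]$ is nonzero under $x^+_{1,g}\mapsto e_2$, $x^-_{2,g}\mapsto e_1$, giving the mixed cocharacter $\chi_{((1)_{g^+})}\otimes \chi_{((1)_{g^-})}$; the symmetric square h.w.v. produces $\chi_{((1)_{(g^2)^+})}$ via $e_1e_2\in B_{g^2}^+$; and an analogous evaluation of $x^-_{1,g}\circ x^-_{2,g}$ already vanishes in $\mathcal{G}_2$, so the partition $((2)_{g^-})$ contributes nothing. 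Each $\psi$-case is handled similarly, the only new ingredient being that $e_1e_2$ is now skew, which flips the parity of the mixed cocharacters.

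Finally, I would add up $\pi_{n_1,\ldots,n_{2k}}(B)(1)$ weighted by the multinomial coefficients as in (\ref{codimension_cocharacters}) and compare with the polynomial $c_n^{\sharp}(B)$ given in item 2). Each of the listed multiplicities contributes exactly the amount needed to saturate the known codimension; by Remark \ref{remark} no further multiplicity can be nonzero, so the list in Table \ref{tableG_2gamma_psi} is complete. The main obstacle is purely bookkeeping: keeping track, for every combination of involution $(\gamma$ or $\psi)$ and grading parameters $(g,h,s,u)$, of which basis elements are symmetric versus skew, and matching this data against the h.w.v.'s in Table \ref{tabela_mult}. Once that translation is done, every individual verification is a one-line evaluation in the $4$-dimensional algebra $\mathcal{G}_2$.
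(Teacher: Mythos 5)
Your overall route is exactly the paper's: items 1) and 2) are taken from \cite{Wesley2,Nascimento}, and item 3) is proved in the same pattern as Lemma \ref{T-ideal_C_3} --- exhibit, via Proposition \ref{prop:nonzero_multiplicities}, the proper h.w.v.'s of Table \ref{tabela_mult} that are not identities, and then use (\ref{codimension_cocharacters}) together with the codimensions of item 2) to see that no further multiplicity can occur.

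One computation in your write-up is wrong and, as stated, would derail the $\gamma$-cases. Since the involution reverses products and $e_2e_1=-e_1e_2$, one has $(e_1e_2)^{\gamma}=\gamma(e_2)\gamma(e_1)=-e_2e_1=e_1e_2$, so $e_1e_2$ is \emph{symmetric} under $\gamma$ (it is skew under $\psi$ and $\tau$). Hence for $\mathcal{G}_{2,\gamma}^{1,s}$ the correct $\gi$-support is $\{1^+,1^-,s^+\}$; your claim $e_1e_2\in B_s^-$ would add a spurious cocharacter $\chi_{((1)_{s^-})}$, and the resulting lower bound would exceed $c_n^\sharp(\mathcal{G}_{2,\gamma}^{1,s})=1+2n+2\binom{n}{2}$, so the saturation step could not close (the same sign matters for the entries $(g^2)^+$ and $(su)^+$ in all $\gamma$-rows of Table \ref{tableG_2gamma_psi}). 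Your second example in fact treats this correctly, placing $e_1e_2\in B_{g^2}^+$ for $\mathcal{G}_{2,\gamma}^{g,g}$, so the slip is local; note also that there $\chi_{((1)_{(g^2)^+})}$ comes from the degree-one h.w.v. $x^+_{1,g^2}$ evaluated at $e_1e_2$, not from a ``square.'' Finally, completeness of the table is forced by the codimension count itself, exactly as in the proof of Lemma \ref{T-ideal_C_3}; Remark \ref{remark} bounds multiplicities of algebras in $\varGstar(A)$ and is not the tool that rules out extra multiplicities here.
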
 \vspace{-0.6cm}

    Now we introduce the following commutative subalgebra of $UT_4:$
    $$W= F(e_{11}+\cdots + e_{44})+F(e_{12}+e_{34})+F(e_{13}+e_{24})+Fe_{14}$$
	   and we equip $W$ with the trivial involution $\nu_1$ and the involutions $\nu_2$ and $\nu_3$ defined below
	$$ \begin{pmatrix}
		a& b & c & d \\
		0&a & 0 & c\\
		0&0 & a & b \\
		0&0 & 0 & a 
	\end{pmatrix}^{\nu_2}= \begin{pmatrix}
		a& -b & -c & d \\
		0&a & 0 & -c\\
		0&0 & a & -b \\
		0&0 & 0 & a 
	\end{pmatrix} \mbox{ and } \begin{pmatrix}
		a& b & c & d \\
		0&a & 0 & c\\
		0&0 & a & b \\
		0&0 & 0 & a 
	\end{pmatrix}^{\nu_3}= \begin{pmatrix}
		a& -b & c & -d \\
		0&a & 0 & c\\
		0&0 & a & -b \\
		0&0 & 0 & a 
	\end{pmatrix}.$$

    For $g,h \in G$, we denote by $W_*^{g,h}$ as the algebra $W$ with involution $*\in \{\nu_1, \nu_2,\nu_3\}$ and the only $G$-grading determined by 
$$e_{11}+\cdots +e_{44}\in (W_*^{g,h})_1, \,e_{12}+e_{34}\in (W_*^{g,h})_g,\,e_{13}+e_{24}\in (W_*^{g,h})_h\mbox{ and } e_{14}\in (W_*^{g,h})_{gh}. $$  

 Regarding the previous algebra, we have the following lemmas, which can be consulted in \cite{Wesley2}.

    \begin{lemma} \cite{Wesley2} Consider $g,s,u \in G^\times$ such that $|g|>2$, $s\neq u$ and $su\neq 1$. Then,
    \begin{enumerate}
        \item[1)] $c_n^{\sharp}(B_1)=1+3n+
            {2}\displaystyle\binom{n}{2}$ and $c_n^{\sharp}(B_2)=1+2n+\displaystyle {2}\binom{n}{2}$, for all $B_1\in\{W_{\nu_1}^{s,u}, W_{\nu_2}^{s,u}, W_{\nu_2}^{1,s}, W_{\nu_3}^{s,u}, W_{\nu_3}^{h,h},$ $ W_{\nu_3}^{1,s}, W_{\nu_3}^{g,g^{-1}}, W_{\nu_3}^{g,g} \}$ and $B_2\in \{W_{\nu_1}^{g,g^{-1}}, W_{\nu_2}^{g,g^{-1}}\}$.

        \item[2)] The nonzero proper $(n_1, \ldots, n_{2k})$-cocharacters of the previous algebras are given by  Table \ref{table_W}.
\begin{longtable}{ll}
\endfirsthead
\endhead
\endfoot
\endlastfoot
\toprule

$W_{\nu_1}^{s,u}$
& $\chi_{((1)_{s^+})},\quad  \chi_{((1)_{u^+})},\quad  \chi_{((1)_{su^+})},\quad \chi_{((1)_{s^+})} \otimes \chi_{((1)_{u^+})}$ \\[2mm]

$W_{\nu_1}^{g,g^{-1}}$
& $\chi_{((1)_{g^+})},\quad  \chi_{((1)_{(g^{-1})^+})},\quad \chi_{((1)_{g^+})} \otimes \chi_{((1)_{(g^{-1})^+})}$ \\
[2mm]

$W_{\nu_2}^{s,u}$
& $\chi_{((1)_{s^-})},\quad  \chi_{((1)_{u^-})},\quad  \chi_{((1)_{su^+})},\quad \chi_{((1)_{s^-})} \otimes \chi_{((1)_{u^-})}$ \\[2mm]

$W_{\nu_2}^{g,g^{-1}}$
& $\chi_{((1)_{g^-})},\quad  \chi_{((1)_{(g^{-1})^-})},\quad \chi_{((1)_{g^-})} \otimes \chi_{((1)_{(g^{-1})^-})}$ \\[2mm]

$W_{\nu_2}^{1,s}$
& $\chi_{((1)_{1^-})},\quad  \chi_{((1)_{s^+})},\quad \chi_{((1)_{s^-})},\quad \chi_{((1)_{1^-})} \otimes \chi_{((1)_{s^-})}$ \\ [2mm]

$W_{\nu_3}^{s,u}$
& $\chi_{((1)_{s^-})},\quad  \chi_{((1)_{u^+})},\quad  \chi_{((1)_{su^-})},\quad \chi_{((1)_{s^-})} \otimes \chi_{((1)_{u^+})}$ \\[2mm]

$W_{\nu_3}^{g,g}$
& $\chi_{((1)_{g^+})},\quad  \chi_{((1)_{g^-})},\quad  \chi_{((1)_{(g^2)^-})},\quad \chi_{((1)_{g^+})} \otimes \chi_{((1)_{g^-})}$ \\[2mm]

$W_{\nu_3}^{g,g^{-1}}$
& $\chi_{((1)_{1^-})},\quad  \chi_{((1)_{g^-})},\quad  \chi_{((1)_{(g^{-1})^+})},\quad \chi_{((1)_{g^-})} \otimes \chi_{(((1)_{(g^{-1})^+})}$ \\[2mm]

$W_{\nu_3}^{1,s}$
& $\chi_{((1)_{1^-})},\quad  \chi_{((1)_{s^+})},\quad  \chi_{((1)_{s^-})},\quad \chi_{((1)_{1^-})} \otimes \chi_{((1)_{s^+})}$ \\[2mm]

$W_{\nu_3}^{h,h}$
& $\chi_{((1)_{1^-})},\quad  \chi_{((1)_{h^+})},\quad  \chi_{((1)_{h^-})},\quad \chi_{((1)_{h^+})} \otimes \chi_{((1)_{h^-})}$ \\ [2mm]

\toprule

\caption{Proper cocharacters of $W$.}
\label{table_W}
\end{longtable}
    \end{enumerate}
    \end{lemma}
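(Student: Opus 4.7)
The plan is to mirror the template established in Lemma~\ref{T-ideal_C_3}: item 1) is quoted directly from \cite{Wesley2, Nascimento} (the codimension formulas for each $W_{\nu_i}^{g,h}$ are already computed there), so the only substantive work is item 2), the determination of the proper $(n_1,\ldots,n_{2k})$-cocharacters. For this I would use Proposition~\ref{prop:nonzero_multiplicities} together with the codimension identity (\ref{codimension_cocharacters}) as a sandwich argument.

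The first step is to observe that all algebras in the statement have quadratic codimension growth, hence $\Gamma_n^\sharp\subset\Idd(B)$ for every $n\ge 3$ and every $B$ in the list; therefore only the proper $(n_1,\ldots,n_{2k})$-cocharacters with $n_1+\cdots+n_{2k}\le 2$ can be nonzero, and the candidate characters are exactly those tabulated in (\ref{decomposition_Gamma}) and Table~\ref{tabela_mult}. Next, for each algebra $B$ in Table~\ref{table_W} I would read off $\supp^*(B)$ from the fixed grading and involution: for example $\supp^*(W_{\nu_1}^{s,u})=\{1^+,s^+,u^+,(su)^+\}$, $\supp^*(W_{\nu_2}^{s,u})=\{1^+,s^-,u^-,(su)^+\}$, $\supp^*(W_{\nu_3}^{g,g})=\{1^+,g^+,g^-,(g^2)^-\}$, and so on. Every one-box partition $\chi_{((1)_{t^\epsilon})}$ with $t^\epsilon\in\supp^*(B)\setminus\{1^+\}$ gives a nonzero multiplicity by evaluating $x_{1,t}^\epsilon$ at a nonzero homogeneous element of $B_t^\epsilon$.

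For the degree-two contributions I would exhibit, in each case, an explicit proper h.w.v.\ from the list in Table~\ref{tabela_mult} and an evaluation inside $B$ that yields a nonzero matrix, invoking Proposition~\ref{prop:nonzero_multiplicities} to conclude the corresponding multiplicity is nonzero. Concretely, for the mixed characters $\chi_{((1)_{s^\delta})}\otimes\chi_{((1)_{u^\epsilon})}$ I evaluate the product $x_{1,s}^\delta x_{2,u}^\epsilon$ on homogeneous generators like $e_{12}+e_{34}$ and $e_{13}+e_{24}$ and obtain $e_{14}$; for the symmetric/skew squares $\chi_{((2)_{t^\pm})}$ I check that $x_{1,t}^\pm\circ x_{2,t}^\pm$ is nonzero on $B$. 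The commutator h.w.v.'s $[x_{1,g}^\delta,x_{2,g}^\epsilon]$ vanish because $W$ is commutative, which is why no antisymmetric parts $\chi_{((1,1)_{\cdot})}$ appear in Table~\ref{table_W} and no characters of the form $\chi_{((1)_{1^+})}\otimes\chi_{((1)_{t^\epsilon})}$ either.

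Finally, once a lower estimate on each multiplicity is produced, I substitute into (\ref{codimension_cocharacters}) and obtain
\[
c_n^\sharp(B)\ge 1+kn+\ell\binom{n}{2},
\]
where $k$ and $\ell$ count the number of degree-one and degree-two multiplicities respectively (recalling that the degree of each $\chi_{((1)_{t^\epsilon})}\otimes\chi_{((1)_{s^\delta})}$ is $1$). Matching this lower bound against the exact value supplied by item 1) forces equality and, in particular, forces every multiplicity not listed in Table~\ref{table_W} to vanish. The main tedium (rather than a real obstacle) is the case analysis over the nine algebras and, within $W_{\nu_3}^{g,g}$ and $W_{\nu_3}^{g,g^{-1}}$, the correct bookkeeping of signs induced by $\nu_3$ on the off-diagonal components; once this is done the verification for each algebra is a one-line evaluation, fully parallel to the $C_{3,*}^g$ computation already carried out in Lemma~\ref{T-ideal_C_3}.
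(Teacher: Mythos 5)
Your strategy is the one the paper itself intends for all tables in this section: quote the codimension formulas of item 1) from \cite{Wesley2,Nascimento}, note that quadratic growth kills $\Gamma_n^\sharp$ for $n\ge 3$, read off $\supp^*$ of each $W_*^{g,h}$, exhibit proper h.w.v.'s that are non-identities and invoke Proposition~\ref{prop:nonzero_multiplicities}, then close by comparing with the exact codimensions via (\ref{codimension_cocharacters}), exactly as in the proof of Lemma~\ref{T-ideal_C_3}. Your supports are computed correctly, and the observation that commutativity of $W$ annihilates all commutator h.w.v.'s (hence no $\chi_{((1,1)_{\cdot})}$ and no $\chi_{((1)_{1^+})}\otimes\chi_{((1)_{t^\epsilon})}$) is right.

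Two points need repair before the argument closes. First, the final matching step is off by the multinomial weight: by (\ref{decomposition_Gamma}) the component $\Gamma_{(1_{s^\delta},1_{t^\epsilon})}$ occurs with coefficient $2$ inside $\Gamma_2^\sharp$, so a mixed multiplicity $m_{((1)_{s^\delta},(1)_{t^\epsilon})}=1$ contributes $2\binom{n}{2}$ to $c_n^\sharp$, not $\binom{n}{2}$. With your $\ell$ equal to the number of degree-two multiplicities, the lower bound for, say, $W_{\nu_1}^{s,u}$ is $1+3n+\binom{n}{2}$, strictly below the exact value $1+3n+2\binom{n}{2}$, so ``forces equality'' does not follow and an extra character of type $\chi_{((2)_{t^\epsilon})}$ or $\chi_{((1,1)_{t^\epsilon})}$ is not excluded; with the correct weight the lower bound coincides with item 1) and the sandwich does force all unlisted multiplicities to vanish (and pins the mixed multiplicity at exactly $1$). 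Second, your clause about checking that $x_{1,t}^{\pm}\circ x_{2,t}^{\pm}$ is nonzero on $B$ is misleading: every homogeneous symmetric or skew component of $W$ of degree different from $1$ is spanned by a single element of square zero, so all these Jordan h.w.v.'s are identities of $W$, which is precisely why no $\chi_{((2)_{t^{\pm}})}$ appears in Table~\ref{table_W}; these characters must be ruled out (by the corrected count above, or by direct evaluation), not exhibited. With these two corrections your proof is complete and coincides with the paper's intended argument.
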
\vspace{-0.6cm}

Now, we consider some direct sums of the previous algebras and their respectively $T_G^*$-ideals. In the following, $k$ and $v$ will run through all elements of $\{s,u\}$ and $\{g,g^{-1}\}$, respectively. Moreover, we denote by $q_a$ any element of the set $\{1,a\}$, for some $a\in G.$

\begin{lemma}\label{Direct_sum} For $s,u \in G^\times$ with $s \neq u$ and $g \in G$ with $|g|>2$ we have
\begin{enumerate} 
    \item[1)] $\IdGstar{\mathcal{G}_{2,\psi}^{s,u}\oplus W_{\nu_1}^{s,u}}= \langle x_{1,q_k}^-, x_{1,k}^+x_{2,k}^+,x_{1,k}^+x_{2,su}^\epsilon,
        x_{1,su}^\delta x_{2,su}^\epsilon,[x_{1,1}^+,x_{2,su}^\epsilon], x_{1,r}^\epsilon \rangle_{T_G^*}$

    \item[2)] $\IdGstar{\mathcal{G}_{2,\tau}^{s,u}\oplus W_{\nu_2}^{s,u}}= \langle x_{1,1}^-, x_{1,k}^+, x_{1,k}^-x_{2,k}^-, x_{1,k}^-x_{2,su}^\epsilon, x_{1,su}^\delta x_{2,su}^\epsilon, [x_{1,1}^+,x_{2,su}^\epsilon], x_{1,r}^\epsilon  \rangle_{T_G^*}$

    \item[3)] $\IdGstar{\mathcal{G}_{2,\gamma}^{s,u}\oplus W_{\nu_3}^{s,u}}= \langle x_{1,q_u}^-, x_{1,s}^+, x_{1,k}^\epsilon x_{2,k}^\epsilon , x_{1,s}^-x_{2,su}^\epsilon,
        x_{1,u}^+x_{2,su}^\epsilon, x_{1,su}^\delta x_{2,su}^\epsilon, [x_{1,1}^+,x_{2,su}^\epsilon], x_{1,r}^\epsilon \rangle_{T_G^*}$

    \item[4)] $\IdGstar{\mathcal{G}_{2,\psi}^{g,g^{-1}}\oplus W_{\nu_1}^{g,g^{-1}}}= \langle x_{1,v}^-, x_{1,v}^+x_{2,v}^+, x_{1,1}^-x_{2,1}^-, x_{1,1}^-x_{2,v}^\epsilon, [x_{1,1}^+,x_{2,1}^\epsilon], x_{1,r}^\epsilon \rangle_{T_G^*}$

    \item[5)] $\IdGstar{\mathcal{G}_{2,\tau}^{g,g^{-1}}\oplus W_{\nu_2}^{g,g^{-1}}}= \langle x_{1,v}^+, x_{1,v}^-x_{2,v}^-, x_{1,1}^-x_{2,1}^-, x_{1,1}^-x_{2,v}^-, [x_{1,1}^+,x_{2,1}^\epsilon], x_{1,r}^\epsilon
    \rangle_{T_G^*}$

     \item[6)] $\IdGstar{\mathcal{G}_{2,\gamma}^{g,g^{-1}}\oplus W_{\nu_3}^{g,g^{-1}}}= \langle x_{1,g}^+, x_{1,g^{-1}}^-, x_{1,v}^\epsilon x_{2,v}^\epsilon, x_{1,1}^-x_{2,q_g}^-,
        x_{1,1}^-x_{2,g^{-1}}^+, [x_{1,1}^+,x_{2,1}^\epsilon], x_{1,r}^\epsilon\rangle_{T_G^*}$

        \item[7)] $c_n^\sharp(B_1)=1+3n+{4}\displaystyle\binom{n}{2}$ and $c_n^\sharp(B_2) =1+4n+{4}\displaystyle\binom{n}{2}$, for all $B_1\in \{\mathcal{G}_{2,\psi}^{g,{g^{-1}}}\oplus W_{\nu_1}^{g,g^{-1}}, 
\mathcal{G}_{2,\tau}^{g,g^{-1}}\oplus W_{\nu_2}^{g,g^{-1}},  \mathcal{G}_{2,\gamma}^{g,g^{-1}}\oplus W_{\nu_3}^{g,g^{-1}}\}$ and $B_2\in \{\mathcal{G}_{2,\psi}^{s,u}\oplus W_{\nu_1}^{s,u}, \mathcal{G}_{2,\tau}^{s,u}\oplus W_{\nu_2}^{s,u}, \mathcal{G}_{2,\gamma}^{s,u}\oplus W_{\nu_3}^{s,u} \}$.

        \item[8)] The nonzero proper $(n_1, \ldots, n_{2k})$-cocharacters of the previous algebras are given by Table \ref{table_sum1}.
       \begin{longtable}{ll}
\endfirsthead
\endhead
\endfoot
\endlastfoot
\toprule

$\mathcal{G}_{2,\psi}^{s,u} \oplus W_{\nu_1}^{s,u}$ &
$\chi_{((1)_{s^+})},\quad \chi_{((1)_{u^+})},\quad \chi_{((1)_{su^-})},\quad \chi_{((1)_{su^+})},\quad 2\,\chi_{((1)_{s^+})} \otimes \chi_{((1)_{u^+})}$ \\[2mm]

$\mathcal{G}_{2,\tau}^{s,u} \oplus W_{\nu_2}^{s,u}$ &
$\chi_{((1)_{s^-})},\quad \chi_{((1)_{u^-})},\quad \chi_{((1)_{su^+})},\quad \chi_{((1)_{su^-})},\quad 2\,\chi_{((1)_{s^-})} \otimes \chi_{((1)_{u^-})}$ \\[2mm]

$\mathcal{G}_{2,\gamma}^{s,u} \oplus W_{\nu_3}^{s,u}$ &
$\chi_{((1)_{s^-})},\quad \chi_{((1)_{u^+})},\quad \chi_{((1)_{su^-})},\quad \chi_{((1)_{su^+})},\quad 2\,\chi_{((1)_{s^-})} \otimes \chi_{((1)_{u^+})}$ \\[2mm]

$\mathcal{G}_{2,\psi}^{g,g^{-1}} \oplus W_{\nu_1}^{g,g^{-1}}$ &
$\chi_{((1)_{1^-})},\quad \chi_{((1)_{g^+})},\quad \chi_{((1)_{(g^{-1})^+})},\quad 2\,\chi_{((1)_{g^+})} \otimes \chi_{((1)_{(g^{-1})^+})}$ \\[2mm]

$\mathcal{G}_{2,\tau}^{g,g^{-1}} \oplus W_{\nu_2}^{g,g^{-1}}$ &
$\chi_{((1)_{1^-})},\quad \chi_{((1)_{g^-})},\quad \chi_{((1)_{(g^{-1})^-})},\quad 2\,\chi_{((1)_{g^-})} \otimes \chi_{((1)_{(g^{-1})^-})}$ \\[2mm]

$\mathcal{G}_{2,\gamma}^{g,g^{-1}} \oplus W_{\nu_3}^{g,g^{-1}}$ &
$\chi_{((1)_{1^-})},\quad \chi_{((1)_{g^-})},\quad \chi_{((1)_{(g^{-1})^+})},\quad 2\,\chi_{((1)_{g^-})} \otimes \chi_{((1)_{(g^{-1})^+})}$ \\[2mm]
\toprule
\caption{Proper cocharacters of some sums of $(G,*)$-algebras.}
\label{table_sum1} 
\end{longtable}
\end{enumerate}
\end{lemma}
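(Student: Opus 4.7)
The plan is to prove items 1)-6), 7), and 8) together, since the $T_G^*$-ideal, codimension, and cocharacter descriptions are tightly linked via formula (\ref{codimension_cocharacters}) and Proposition \ref{prop:nonzero_multiplicities}. The structural feature I would exploit is that each algebra here is a direct sum of a Grassmann summand and a $W$-summand, both of whose proper cocharacters are already determined in Lemmas \ref{T-ideal_G_2tau} and \ref{T-ideal_G_2gamma} and in the preceding lemma on $W$.

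First, I would establish the decomposition in Table \ref{table_sum1} by matching lower and upper bounds on each multiplicity. Since both summands sit inside the direct sum as $\gi$-subalgebras, Remark \ref{remark} forces every multiplicity appearing in either summand to appear in the direct sum. Moreover, whenever the same two-variable irreducible character is present in both Tables \ref{tableG_2tau}, \ref{tableG_2gamma_psi} (for the Grassmann piece) and Table \ref{table_W} (for the $W$ piece), the two associated proper highest weight vectors remain linearly independent modulo the identities of the direct sum, because an evaluation supported on the first summand (with zero on the second) distinguishes them from one supported on the second. This yields the multiplicity $2$ in the joint character and reproduces Table \ref{table_sum1} exactly.

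Next, I would verify that each polynomial listed in items 1)-6) is a $\gi$-identity of the direct sum by checking it on each summand: the Grassmann relations come from Lemmas \ref{T-ideal_G_2tau} and \ref{T-ideal_G_2gamma}, while for the $W$-summand the key facts are that any product of three or more of the off-diagonal generators $e_{12}+e_{34},\, e_{13}+e_{24},\, e_{14}$ vanishes and that $W$ is commutative. Substituting the multiplicities of Table \ref{table_sum1} into formula (\ref{codimension_cocharacters}) then yields the codimension values of item 7): $1+3n+4\binom{n}{2}$ in the $g,g^{-1}$ cases and $1+4n+4\binom{n}{2}$ in the $s,u$ cases with $su \neq 1$.

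Finally, to close items 1)-6) I would show that the listed generators already produce the full $\gi$-identities, by reducing modulo the listed relations to a spanning set of proper multilinear polynomials of the correct dimension $\gamma_n^\sharp$. The relations are designed to kill every proper monomial of length $\geq 3$ and to collapse the length-$2$ part onto the finite set of basic h.w.v.'s visible in Table \ref{table_sum1}. The main obstacle here is the bookkeeping: one must carefully track how symmetric and skew variables of the various homogeneous degrees $1, s, u, su$ (respectively $1, g, g^{-1}$) interact under the quadratic and linear relations so that the spanning count matches $\gamma_n^\sharp$ exactly. Once the two bounds agree, the equality of $T_G^*$-ideals follows from the equality of codimensions.
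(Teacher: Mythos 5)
Your plan is essentially the paper's argument with the logical order reversed: the paper first proves the $T_G^*$-ideal equality (item 1) directly, by taking an arbitrary proper multilinear identity, disposing of degrees $n\ge 3$ and $n=1$, reducing the degree-$2$ part modulo the listed generators to $\alpha x_{1,s}^+x_{2,u}^+ + \beta[x_{1,s}^+,x_{2,u}^+]$, and killing it with the two evaluations $(e_1,0),(e_2,0)$ and $(0,e_{12}+e_{34}),(0,e_{13}+e_{24})$; only afterwards does it read off item 8 via Proposition~\ref{prop:nonzero_multiplicities} and item 7 via (\ref{codimension_cocharacters}). You instead go cocharacters $\to$ codimensions $\to$ ideal-by-dimension-count. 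That route is legitimate: since the algebras are unitary, matching $\dim\Gamma_n^\sharp/(\Gamma_n^\sharp\cap I)$ with a lower bound for $\gamma_n^\sharp$ forces $\Gamma_n^\sharp\cap I=\Gamma_n^\sharp\cap\Idd(A)$ for all $n$ and hence $I=\Idd(A)$. Two bookkeeping points you should state explicitly, though: (i) your lower-bound argument via Remark~\ref{remark} only shows that the characters in Table~\ref{table_sum1} occur; completeness of the table needs either the subadditivity $m_\lambda(B_1\oplus B_2)\le m_\lambda(B_1)+m_\lambda(B_2)$ (from $\Idd(B_1\oplus B_2)=\Idd(B_1)\cap\Idd(B_2)$) or must be extracted a posteriori from the closed dimension count; (ii) the deferred "bookkeeping" (that the listed generators absorb all proper multilinear polynomials of degree $\ge 3$ and collapse degree $2$ onto the two survivors) is the actual content of items 1)--6); the paper is also terse there, but it cannot be waved away, since it uses, e.g., that commutators and Jordan products of the surviving variables are homogeneous symmetric or skew elements whose degrees either fall outside the support or land on the degree-one skew generators.

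The one step whose justification, as written, is not valid is the multiplicity-$2$ claim. The principle "whenever the same two-variable irreducible character occurs in both summands, the two proper h.w.v.'s remain independent modulo the identities of the direct sum" is false in general: if both summands annihilated the same line in the two-dimensional span of $x_{1,s}^\epsilon x_{2,t}^\delta$ and $[x_{1,s}^\epsilon,x_{2,t}^\delta]$ (for instance, two commutative summands both killing the commutator), the multiplicity of the sum would stay equal to $1$. What makes the conclusion true here is that the two summands annihilate \emph{different} lines: $W$ is commutative, so on $W$ exactly the commutator vanishes while the product does not, whereas on the Grassmann summand exactly the Jordan product vanishes ($e_1\circ e_2=0$, $e_1e_2\neq 0$); hence a combination vanishing on both forces $\alpha=0$ and then $\beta=0$. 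This is precisely the pair of evaluations the paper performs, and your phrase "an evaluation supported on the first summand distinguishes them from one supported on the second" must be replaced by this explicit check in each of the six cases; with that repair, the rest of your scheme goes through.
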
 \vspace{-0.9cm}
\begin{proof} 
We consider only item 1, since the other cases follow similarly. Let $I$ be the $\gi$-ideal given by 
$$I=\langle x_{1,q_k}^-, x_{1,k}^+x_{2,k}^+,x_{1,k}^+x_{2,su}^\epsilon,
        x_{1,su}^\delta x_{2,su}^\epsilon,[x_{1,1}^+,x_{2,su}^\epsilon], x_{1,r}^\epsilon \rangle_{T_G^*}.$$ 
It is clear that $I\subseteq \IdGstar{\mathcal{G}_{2,\psi}^{s,u}\oplus W_{\nu_1}^{s,u}}$.
For the opposite inclusion, let $f \in \IdGstar{\mathcal{G}_{2,\psi}^{s,u}\oplus W_{\nu_1}^{s,u}}$ be a multilinear and multihomogeneous proper identity. Notice that, for $n\geq 3$, $\Gamma_n^\sharp \subseteq I$ and since  $\mathcal{G}_{2,\psi}^{s,u}\oplus W_{\nu_1}^{s,u}$ has quadratic growth, we have
$$ \Gamma_{n}^\sharp= \Gamma_n^{\sharp} \cap \IdGstar{\mathcal{G}_{2,\psi}^{s,u}\oplus W_{\nu_1}^{s,u}}\subseteq I,\quad \text{for all} \quad n \geq 3. $$

Moreover, it is straightforward to verify that
$$ \Gamma_1^{\sharp}\cap \IdGstar{\mathcal{G}_{2,\psi}^{s,u}\oplus W_{\nu_1}^{s,u}}=\text{span}_F\{x_{1,1}^-, x_{1,s}^-,x_{1,u}^-, x_{1,r}^\epsilon\} \subseteq I. $$
Therefore, it remains to consider the case where $f$ has degree 2. In this case, after reducing $f$ modulo $I$, we may assume that $f=\alpha x_{1,s}^+x_{2,u}^++\beta [x_{1,s}^+,x_{2,u}^+]$. 
Consider the evaluation $x_{1,s}^+ \mapsto (e_1,0)$ and $x_{2,u}^+ \mapsto (e_2,0)$, we obtain $\alpha +2\beta=0$.
Now, taking the evaluation $x_{1,s}^+ \mapsto (0,e_{12}+e_{34})$ and $x_{2,u}^+ \mapsto (0,e_{13}+e_{24})$, we obtain $\alpha +\beta=0$.
Therefore, we must have $\alpha=\beta=0$ and then $f\in I$. This proves that $I=\textnormal{Id}^\sharp(\mathcal{G}_{2,\psi}^{s,u}\oplus W_{\nu_1}^{s,u})$.

In order to prove item 8) for $\mathcal{G}_{2,\psi}^{s,u}\oplus W_{\nu_1}^{s,u}$, we consider again the Proposition \ref{prop:nonzero_multiplicities}. Notice that the proper h.w.v.'s $x_{1,s}^+, x_{1,u}^+, x_{1,su}^+, x_{1,su}^-$ and $[x_{1,s}^+,x_{2,u}^+]$ are not identities of $\mathcal{G}_{2,\psi}^{s,u}\oplus W_{\nu_1}^{s,u}$. Therefore, the nonzero multiplicities satisfy
$$m_\lambda = 1 \quad \textnormal{and} \quad  1 \leq m_{((1)_{s^+} ,(1)_{u^+})} \leq 2, \quad \textnormal{for all} ~ \lambda \in \{((1)_{s^+}) , ((1)_{u^+}), ((1)_{su^+}), ((1)_{su^-})\}.$$

Moreover, according to the above evaluation, there is no nonzero linear combination $\alpha x_{1,s}^-x_{2,u}^++\beta [x_{1,s}^-,x_{2,u}^+]$ that results in an element of $\textnormal{Id}^\sharp(\mathcal{G}_{2,\psi}^{s,u}\oplus W_{\nu_1}^{s,u})$. Therefore, by Proposition \ref{prop:nonzero_multiplicities} we have $m_{((1)_{s^+} ,(1)_{u^+})} = 2$. 

Finally, item 7) follows by equation (\ref{codimension_cocharacters}).
\end{proof}

Analogously to the previous theorem, the next result is established by the same method. Also, we recall that $q_a$ denotes any element of the set 
$\{1,a\}$, for some $a\in G.$

\begin{lemma}\label{Direct_sum} For $s,h \in G^\times$ and $g \in G$ with $|g|>2$, $|h|=2$ we have
\begin{enumerate}

    \item[1)] $\IdGstar{\mathcal{G}_{2,\gamma}^{g,g}\oplus W_{\nu_3}^{g,g}}= \langle x_{1,1}^-, x_{1,g}^\epsilon x_{2,g}^\epsilon ,x_{1,g}^\sigma x_{2,g^2}^\epsilon, x_{1,g^2}^\epsilon x_{2,g^2}^\epsilon,[x_{1,1}^+x_{2,q_{g^2}}^\epsilon], x_{1,r}^\epsilon  \rangle_{T_G^*}$

    \item[3)] $\IdGstar{\mathcal{G}_{2,\gamma}^{h,h}\oplus W_{\nu_3}^{h,h}}= \langle x_{1,1}^-x_{2,q_h}^-,
    x_{1,1}^-x_{2,h}^+,
    x_{1,h}^\epsilon x_{2,h}^\epsilon, [x_{1,1}^+,x_{2,q_h}^\epsilon],  x_{1,r}^\epsilon \rangle_{T_G^*}$

        \item[4)] $\IdGstar{\mathcal{G}_{2,\gamma}^{1,s}\oplus W_{\nu_3}^{1,s}}= \langle x_{1,1}^-x_{2,q_{s}}^-,x_{1,1}^-x_{2,s}^+, 
        x_{1,s}^+x_{2,s}^\epsilon, x_{1,s}^-x_{2,s}^-, [x_{1,1}^+,x_{2,q_s}^\epsilon],  x_{1,r}^\epsilon \rangle_{T_G^*}$

        \item[5)] $\IdGstar{\mathcal{G}_{2,\tau}^{1,g}\oplus W_{\nu_2}^{1,g}}= \langle x_{1,1}^-x_{2,1}^-,x_{1,1}^-x_{1,g}^+, 
        x_{1,g}^\sigma x_{2,g}^\epsilon,[x_{1,1}^+x_{2,q_{g}}^\epsilon], x_{1,r}^\epsilon \rangle_{T_G^*}$.

        \item[6)] $c_n^\sharp(\mathcal{G}_{2,\gamma}^{g,g}\oplus W_{\nu_3}^{g,g})=1+4n+{4}\displaystyle\binom{n}{2}$ and for all $B\in \{\mathcal{G}_{2,\gamma}^{h,h}\oplus W_{\nu_3}^{h,h}, 
\mathcal{G}_{2,\gamma}^{1,s}\oplus W_{\nu_3}^{1,s}, 
\mathcal{G}_{2,\tau}^{1,s}\oplus W_{\nu_2}^{1,s}\}$, we have $c_n^\sharp(B) =1+3n+{4}\displaystyle\binom{n}{2}$.

        \item[7)] The nonzero proper $(n_1, \ldots, n_{2k})$-cocharacters of the previous algebras are given by Table \ref{table_sum2}.
       \begin{longtable}{llc}
\endfirsthead
\endhead
\endfoot
\endlastfoot
\toprule

$\mathcal{G}_{2,\gamma}^{g,g} \oplus W_{\nu_3}^{g,g}$ &
$\chi_{((1)_{g^+})},\quad \chi_{((1)_{g^-})},\quad \chi_{((1)_{(g^2)^-})},\quad \chi_{((1)_{(g^2)^+})},\quad 2\,\chi_{((1)_{g^+})} \otimes \chi_{((1)_{g^-})}$ \\[2mm]

$\mathcal{G}_{2,\gamma}^{h,h} \oplus W_{\nu_3}^{h,h}$ &
$\chi_{((1)_{1^-})},\quad \chi_{((1)_{h^+})},\quad \chi_{((1)_{h^-})},\quad 2\,\chi_{((1)_{h^+})} \otimes \chi_{((1)_{h^-})}$ \\[2mm]

$\mathcal{G}_{2,\gamma}^{1,s} \oplus W_{\nu_3}^{1,s}$ &
$\chi_{((1)_{1^-})},\quad \chi_{((1)_{s^+})},\quad \chi_{((1)_{s^-})},\quad 2\,\chi_{((1)_{1^-})} \otimes \chi_{((1)_{s^+})}$ \\[2mm]

$\mathcal{G}_{2,\tau}^{1,s} \oplus W_{\nu_2}^{1,s}$ &
$\chi_{((1)_{1^-})},\quad \chi_{((1)_{s^+})},\quad \chi_{((1)_{s^-})},\quad 2\,\chi_{((1)_{1^-})} \otimes \chi_{((1)_{s^-})}$ \\ [2mm]

\toprule

\caption{Proper cocharacters of some sums of $(G,*)$-algebras.}
\label{table_sum2}
\end{longtable}

\end{enumerate}
\end{lemma}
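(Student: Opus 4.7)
The plan is to mirror the proof strategy of the previous Lemma \ref{Direct_sum} step by step; the author already signals that the argument is structurally identical. For each of the four algebras $A$ in the list, the proof of items 1)--5) splits into two inclusions. The inclusion $I\subseteq \Idd(A)$ is routine: each generator of $I$ vanishes on both summands because of the specific grading and involution (for instance $x_{1,1}^-\equiv 0$ whenever $1^-\notin\supp^*(A)$; the quadratic monomials of the form $x_{1,a}^\delta x_{2,b}^\epsilon$ vanish because the corresponding graded component of the product lives outside $\supp^*(A)$ on both summands; and the commutators $[x_{1,1}^+,x_{2,q_a}^\epsilon]$ vanish because the central elements of the unitary part commute with everything).

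For the reverse inclusion, I would take a multilinear, multihomogeneous proper identity $f\in\Idd(A)$. Since each $A$ here has quadratic growth, $\Gamma_n^\sharp\subseteq I$ automatically for all $n\geq 3$. In degree $1$, $\Gamma_1^\sharp\cap\Idd(A)$ is spanned by the single-variable generators already listed in $I$. In degree $2$, after reducing $f$ modulo $I$, only the pair $\alpha\,x_{1,a}^\delta x_{2,b}^\epsilon + \beta\,[x_{1,a}^\delta,x_{2,b}^\epsilon]$ can survive, for each relevant pair of indices. The coefficients are then killed by comparing two evaluations: one supported on $\mathcal{G}_2$ of the form $x_{1,a}^\delta\mapsto(e_i,0),\ x_{2,b}^\epsilon\mapsto(e_j,0)$ (which yields $\alpha\pm\beta = 0$ depending on the involution), and one supported on $W$ of the form $x_{1,a}^\delta\mapsto(0,e_{12}+e_{34}),\ x_{2,b}^\epsilon\mapsto(0,e_{13}+e_{24})$ (which yields a different linear combination of $\alpha$ and $\beta$). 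The two independent linear relations force $\alpha=\beta=0$, so $f\in I$.

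For item 7), I would apply Proposition \ref{prop:nonzero_multiplicities}. For every entry in Table \ref{table_sum2}, the indicated proper highest weight vector is exhibited: the one-partition characters use single-variable h.w.v.'s, and the two-factor characters with multiplicity $2$ use the linearly independent pair $\{[x_{1,a}^\delta,x_{2,b}^\epsilon],\ x_{1,a}^\delta x_{2,b}^\epsilon\}$. Non-membership in $\Idd(A)$ is verified with the same evaluations as above, and the multiplicity bound is in fact sharp because the two-relation system derived in the degree-$2$ analysis has full rank, so no nonzero linear combination of the two h.w.v.'s lies in $\Idd(A)$. Item 6) is then an immediate substitution into equation (\ref{codimension_cocharacters}).

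The main obstacle is the case-by-case nature of the degree-$2$ argument: one must produce, separately for $\mathcal{G}_{2,\gamma}^{g,g}\oplus W_{\nu_3}^{g,g}$ (element of order $>2$), $\mathcal{G}_{2,\gamma}^{h,h}\oplus W_{\nu_3}^{h,h}$ (element of order $2$), and the two algebras containing the identity component $1$, a consistent choice of evaluations in the $\mathcal{G}_2$-summand versus the $W$-summand whose associated scalar relations are linearly independent. The placeholder $q_a\in\{1,a\}$ in items 1), 3), 4), 5) forces two sub-cases per algebra, and in the cases with $1\in\supp(A)$ extra care is needed because $x_{1,1}^+$ acts as a scalar on the symmetric components, which restricts which evaluations distinguish $x_{1,a}^\delta x_{2,b}^\epsilon$ from its commutator. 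Apart from this bookkeeping the verifications are mechanical.
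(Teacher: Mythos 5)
Your overall route is exactly the one the paper intends: its entire proof of this lemma is the remark that it ``is established by the same method'' as the preceding lemma, and your two-inclusion argument with one evaluation in the Grassmann summand and one in the $W$ summand, followed by Proposition \ref{prop:nonzero_multiplicities} with the listed highest weight vectors for item 7) and equation (\ref{codimension_cocharacters}) for item 6), is that method.

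There is, however, one genuine flaw in your justification of the reverse inclusion. You claim that $\Gamma_n^{\sharp}\subseteq I$ for all $n\geq 3$ holds ``automatically'' because the algebra has quadratic codimension growth. Quadratic growth only gives $\Gamma_n^{\sharp}=\Gamma_n^{\sharp}\cap \textnormal{Id}^{\sharp}(A)$ for $n\geq 3$, i.e.\ every proper polynomial of degree at least $3$ is an identity of $A$; it says nothing about the finitely generated $T_G^{*}$-ideal $I$, which is built from elements of degree at most $2$. (If a needed generator were dropped from $I$, the algebra would still have quadratic growth, but the containment $\Gamma_n^{\sharp}\subseteq I$ would fail; so the implication you invoke is false in general.) This containment is precisely the point where one checks that the listed generators are strong enough in higher degree, and it requires a direct, case-by-case verification from the generators — for instance, for $\mathcal{G}_{2,\gamma}^{g,g}\oplus W_{\nu_3}^{g,g}$ one must express monomials such as $x_{1,g}^{+}x_{2,g}^{-}x_{3,g}^{+}$ as consequences of the degree-$2$ generators, e.g.\ by substituting the symmetric and skew degree-$g^{2}$ components of $x_{2,g}^{-}x_{3,g}^{+}$ into the generators $x_{1,g}^{\sigma}x_{2,g^{2}}^{\epsilon}$, and similarly dispose of all commutator-type degree-$3$ elements. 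In the paper's proof of the previous lemma this step appears as the bare assertion ``Notice that, for $n\geq 3$, $\Gamma_n^{\sharp}\subseteq I$''; it is an assertion about $I$, not a consequence of the growth of $A$. With that verification supplied, the rest of your argument (degrees $1$ and $2$, the reduction modulo $I$ to $\alpha\,x_{1,a}^{\delta}x_{2,b}^{\epsilon}+\beta\,[x_{1,a}^{\delta},x_{2,b}^{\epsilon}]$ with the two evaluations, and items 6) and 7), which do not depend on the degree-$\geq 3$ containment) is the paper's proof.
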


\section{Characterizing varieties with nonzero multiplicities}

The aim of this section is to classify the unitary $(G,*)$-varieties with quadratic codimension growth. To this end, we establish a precise correspondence between the nonzero multiplicities appearing in the decomposition of the proper \((n_1, \ldots, n_{2k})\)-cocharacters and the minimal varieties contained in them.

Motivated by Theorem~\ref{estrutura}, from now on we assume that \( A \) is a unitary $\gi$-algebra with quadratic codimension growth and of type \( F + J(A) \). In this case, we have \( \Gamma_n ^
\sharp \subseteq \IdGstar{A} \), for all \( n \ge 3 \). Hence, our analysis is restricted to the subspaces \( \Gamma_1^\sharp \) and \( \Gamma_2^\sharp \), which are completely determined by their multihomogeneous components.

For $n=n_1+\cdots +n_{2k}$, with $n\in\{1,2\}$, we consider the decomposition of $\pi_{n_1,\ldots,n_{2k}}(A)$ as follows

$$ \pi_{n_1,\ldots,n_{2k}}(A)=\underset{(\lambda_1,\ldots, \lambda_{2k}) \vdash (n_1,\ldots,n_{2k})}{\sum} {{m}}_{\lambda_1,\ldots, \lambda_{2k}} \chi_{\lambda_1} \otimes \cdots \otimes \chi_{\lambda_{2k}}.$$

 It is well known that $\Gamma_{n_1,\ldots,n_{2k}}(A)$ can be seen as a direct summand of $\Gamma_{n_1,\ldots,n_{2k}},$ as an \(S_{n_1,\ldots,n_{2k}}\)-module. Therefore, the multiplicities appearing in the 
\((n_1,\ldots,n_{2k})\)-cocharacter of \(\Gamma_{n_1,\ldots,n_{2k}}(A)\) 
do not exceed the corresponding multiplicities in the 
\((n_1,\ldots,n_{2k})\)-cocharacter of \(\Gamma_{n_1,\ldots,n_{2k}}\). Hence, by Table, \ref{tabela_mult} we have
\begin{equation} 
\label{multiplicities}
    0\leq m_{\lambda }\leq 1 \quad \mbox{ and }\quad\quad 0\leq m_{((1)_{s^\delta},(1)_{t^\epsilon})}\leq 2,
\end{equation} for all $\lambda \in \mathcal{S}=\{((1)_{u^{\epsilon_{1}}}),((2)_{u^{\epsilon_1}}),(1,1)_{u^{\epsilon_2}},((1)_{1^+},(1)_{u^{\epsilon_1}})\mid u\in G, \epsilon_i\in \{+,-\},  u^{\epsilon_1}
\neq 1^+\}$ and $s^\delta\neq t^\epsilon \neq 1^+$. Moreover, since $A$ has quadratic growth, we must have $m_{\lambda}\neq 0$, for some  multipartition $\lambda$ of $2$.

In the following lemmas, we analyze all admissible values of the multiplicities introduced above and provide a characterization of the corresponding varieties in terms of these values.

\begin{lemma}\label{lineares1} For $g,h\in G$ with $h \neq 1$, we have
    \begin{enumerate}
        \item $m_{((1)_{h^+})}\neq 0$ if and only if $C_{2}^h \in \varGstar(A)$.
        \item $m_{((1)_{g^-})}\neq 0$ if and only if $C_{2,*}^g \in \varGstar(A)$.
    \end{enumerate}
\end{lemma}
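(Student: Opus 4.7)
My plan is to deduce both equivalences from the general machinery of Section~2 applied to the explicit codimension and cocharacter data of $C_2^h$ and $C_{2,*}^g$ recorded in Lemma~\ref{T-ideal_C_3}. Since the two items are completely parallel, I will describe the strategy for item~(1) and only note the translation for item~(2).

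For the reverse implication ($\Leftarrow$), I would invoke Remark~\ref{remark}: assuming $C_2^h \in \varGstar(A)$, every multiplicity appearing in the proper cocharacter of $C_2^h$ is dominated by the corresponding multiplicity in the proper cocharacter of $A$. Since Table~\ref{cocharacterofc3} records $m_{((1)_{h^+})}(C_2^h) = 1$, this forces $m_{((1)_{h^+})}(A) \geq 1$, which is exactly what we want.

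For the forward implication ($\Rightarrow$), the task is to show $\IdGstar{A} \subseteq \IdGstar{C_2^h}$. Since $A$ and $C_2^h$ are both unitary, it suffices to verify the inclusion on each multilinear proper component $\Gamma_n^\sharp$. The crucial input from Lemma~\ref{T-ideal_C_3} is $c_n^\sharp(C_2^h) = 1 + n$, which via the expansion (\ref{codimension}) forces $\gamma_i^\sharp(C_2^h) = 0$ for every $i \geq 2$; equivalently $\Gamma_n^\sharp \subseteq \IdGstar{C_2^h}$ for all $n \geq 2$. Thus any proper identity of $A$ of degree at least two is automatically an identity of $C_2^h$, and only the degree-one case merits attention. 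By the decomposition (\ref{decomposition_Gamma}), a multilinear proper polynomial of degree one is a scalar multiple of some $x^\delta_{1,s}$ with $s^\delta \in \mathcal{I}_1$; such a variable lies in $\IdGstar{A}$ precisely when $A_s^\delta = 0$, and lies in $\IdGstar{C_2^h}$ precisely when $s^\delta \neq h^+$. Proposition~\ref{prop:nonzero_multiplicities} translates the hypothesis $m_{((1)_{h^+})}(A) \neq 0$ into $x^+_{1,h} \notin \IdGstar{A}$, which rules out the only variable that could otherwise obstruct the inclusion.

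Item~(2) is handled identically, interchanging the roles of $h^+$ and $g^-$, replacing $C_2^h$ by $C_{2,*}^g$, and invoking the corresponding row of Table~\ref{cocharacterofc3} giving $m_{((1)_{g^-})}(C_{2,*}^g) = 1$. I do not expect a serious obstacle: the whole argument is a direct application of the highest weight vector criterion from Proposition~\ref{prop:nonzero_multiplicities} together with the monotonicity Remark~\ref{remark}, grounded on the tabulated cocharacter data for $C_2$. The only mild point requiring care is matching the support of $C_2^h$ (or $C_{2,*}^g$) with the support of $A$ forced by unitarity together with the assumed nonvanishing multiplicity.
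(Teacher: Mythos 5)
Your proposal is correct, but your forward implication follows a genuinely different route from the paper. The paper argues constructively: from $m_{((1)_{h^+})}\neq 0$ (resp.\ $m_{((1)_{g^-})}\neq 0$) it extracts a nonzero homogeneous element $a\in J(A)_h^+$ (resp.\ $J(A)_g^-$), forms the $\gi$-subalgebra $R$ generated by $1_F$ and $a$ and the ideal $I=\langle a^2\rangle$, and exhibits an explicit $(G,*)$-isomorphism $R/I\cong C_2^h$ (resp.\ $C_{2,*}^g$), so membership in $\varGstar(A)$ is immediate; this uses the standing assumption that $A=F+J(A)$ is finite dimensional. You instead prove the $T_G^*$-ideal inclusion $\IdGstar{A}\subseteq\IdGstar{C_2^h}$ degree by degree on proper multilinear components: degrees $n\ge 2$ are free because $c_n^\sharp(C_2)=1+n$ forces $\gamma_i^\sharp(C_2)=0$ for $i\ge 2$, and degree $1$ reduces, via the multihomogeneous decomposition (\ref{decomposition_Gamma}), to the support comparison settled by Proposition \ref{prop:nonzero_multiplicities}. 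This is exactly the comparison technique the paper deploys later (e.g.\ in Lemmas \ref{lemma1-1} and \ref{Lemma1+}), and it buys a bit of generality (only unitarity of $A$ is needed, not the $F+J(A)$ structure), at the price of importing the codimension value of $C_2$; the paper's construction is more self-contained and also identifies the concrete witness subquotient inside $A$. Two small points of care in your write-up: a degree-one multilinear proper identity is in general a linear combination of the variables $x_{1,s}^\delta$, not a scalar multiple of a single one, so you should say explicitly that you pass to its multihomogeneous components (each of which is such a scalar multiple); and for item (2) with $g=1$ the formula $c_n^\sharp(C_{2,*}^1)=1+n$ is not literally covered by the quoted statement of Lemma \ref{T-ideal_C_3} (which takes $s\in G^\times$), though it is immediate since $C_{2,*}^1$ is commutative with $J^2=0$, and Table \ref{cocharacterofc3} does list $C_{2,*}^1$. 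The converse direction via Remark \ref{remark} and Table \ref{cocharacterofc3} coincides with the paper's.
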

\begin{proof}

    Assume that $m_{((1)_{s^\epsilon})} \neq 0$ for some $s^\epsilon \in \{h^+, g^-\}$  with $g,h\in G$ and $h \neq 1$. By Proposition \ref{prop:nonzero_multiplicities} and Table \ref{tabela_mult}, there exists a nonzero element $a \in J(A)_s^\epsilon$. Let $R$ be the $\gi$-subalgebra of $A$ generated by $1_F$ and $a$, and let $I$ be the $T_G^*$-ideal of $A$ generated by $a^2$. The map $\varphi: R/I\rightarrow C_2$ given by $$\overline{1_F}\mapsto e_{11}+e_{22} \quad \mbox{ and }\quad \overline{a}\mapsto e_{12}$$ defines an isomorphism of $(G,*)$-algebras in the following cases:
\begin{enumerate}
    \item[1.]  if  $s^\epsilon=h^+$\black, we have $R/I \cong C_2^h$.
    \item[2.] if $s^\epsilon=g^-$\black, we obtain $R/I \cong C_{2,*}^g$.  
\end{enumerate}

The converse in both cases follows from  Table \ref{cocharacterofc3} and Remark \ref{remark}. 

\end{proof}

\begin{lemma}\label{jordan} For $g,h \in G$ and $h\neq 1$ we have
    \begin{enumerate}
        \item $m_{((2)_{g^-})} \neq 0$ if and only if $C_{3,*}^g \in \varGstar(A)$.
        \item $m_{((2)_{h^+})} \neq 0$ if and only if $C_{3}^h \in \varGstar(A)$.
    \end{enumerate}
\end{lemma}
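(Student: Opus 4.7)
The plan is to proceed in the same spirit as Lemma~\ref{lineares1}, replacing $C_2$ by $C_3$ and the degree-one proper h.w.v.\ by the Jordan product of degree two recorded in Table~\ref{tabela_mult}. I would prove item~(1) in detail; item~(2) follows by the symmetric recipe, exchanging skew variables of degree $g$ for symmetric variables of degree $h$ and the target algebra $C_{3,*}^{g}$ for $C_{3}^{h}$ with trivial involution.

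For the forward direction of~(1), I would start from $m_{((2)_{g^-})}\neq 0$. By Proposition~\ref{prop:nonzero_multiplicities} and Table~\ref{tabela_mult}, the proper h.w.v.\ $x_{1,g}^{-}\circ x_{2,g}^{-}$ is not a \gi-identity of $A$, so there exist $a,b\in A_g^-$ with $ab+ba\neq 0$. A polarization argument in characteristic zero upgrades this to the existence of a single $a\in A_g^-$ with $a^{2}\neq 0$: indeed, were $c^{2}=0$ for every $c\in A_g^-$, expanding $(a+b)^{2}=0$ would force $ab+ba=0$ for all such pairs, a contradiction. Since $A$ is of type $F+J(A)$ and $g^-\neq 1^+$, this $a$ lies in $J(A)_g^-$ and is therefore nilpotent.

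Next, I would let $R$ be the unitary \gi-subalgebra of $A$ generated by $a$. Commutativity of $R$ yields $R=\spam_F\{1_F,a,a^{2},\dots\}$, and letting $I$ be the $T_G^*$-ideal of $R$ generated by $a^{3}$, the quotient $R/I$ is three-dimensional with basis $\{\overline{1_F},\overline{a},\overline{a^{2}}\}$ (here I invoke $a^{2}\neq 0$). I would then verify that the assignment
\[
\overline{1_F}\longmapsto I_3,\qquad \overline{a}\longmapsto E_1,\qquad \overline{a^{2}}\longmapsto E_1^{2}
\]
extends to a \gi-isomorphism $R/I\cong C_{3,*}^{g}$: the homogeneous degrees $(1,g,g^{2})$ match, and the involution pattern $(+,-,+)$ of $R/I$ coincides with the one inherited from $C_{3,*}^{g}$. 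This delivers $C_{3,*}^{g}\in\varGstar(A)$. For the converse, Table~\ref{cocharacterofc3} shows that $\chi_{((2)_{g^-})}$ appears with nonzero multiplicity in the proper cocharacter of $C_{3,*}^{g}$, and Remark~\ref{remark} transfers this multiplicity to $A$.

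The two technical points I would watch are the polarization step (producing a single element with $a^{2}\neq 0$ from the multilinear h.w.v., where characteristic zero is used essentially) and, in item~(2) when $|h|=2$, verifying that $\overline{a^{2}}$ is linearly independent from $\overline{1_F}$ in $R/I$ despite both living in homogeneous degree $1=h^{2}$; this is automatic because $a^{2}\in J(A)$ is nilpotent while $1_F$ is not. Beyond these, the argument is a direct extension of Lemma~\ref{lineares1} with no essential obstacle.
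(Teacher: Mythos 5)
Your proposal is correct and follows essentially the same route as the paper: extract from the nonzero multiplicity an element $a\in J(A)_s^\epsilon$ with $a^2\neq 0$, pass to the unitary subalgebra generated by $a$ modulo the ideal generated by $a^3$, and identify the quotient with $C_{3,*}^g$ (resp.\ $C_3^h$) via $\overline{1}\mapsto I_3$, $\overline{a}\mapsto E_1$, $\overline{a^2}\mapsto E_1^2$, with the converse by Remark~\ref{remark} and Table~\ref{cocharacterofc3}. The only cosmetic difference is that the paper reads off $(x_{1,s}^\epsilon)^2\notin\Idd(A)$ directly from Proposition~\ref{prop:nonzero_multiplicities}, whereas you recover it from the multilinear Jordan product by a (valid, characteristic-zero) polarization step.
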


\begin{proof}

First, observe that if \( C_{3,*}^g \in \varGstar(A) \), then, by Remark~\ref{remark} and Table~\ref{cocharacterofc3}, we have \( m_{((2)_{g^-})} \neq 0 \). Similarly, if \( C_{3}^h \in \varGstar(A) \), then \( m_{((2)_{h^+})} \neq 0 \).

Now we assume that \( m_{((2)_{s^\epsilon})} \neq 0 \) for some \( s^\epsilon \in \{g^-, h^+\} \) with $g,h \in G$ and $h\neq 1$. By Proposition~\ref{prop:nonzero_multiplicities}, it follows that \( (x_{1,s}^\epsilon)^2 \notin \Idd(A) \). Hence, there exists a nonzero element \( a \in J(A)_s^\epsilon \) such that \( a^2 \neq 0 \). Let \( R \) be the \((G,*)\)-subalgebra of \( A \) generated by \( 1_F \) and \( a \), and let \( I \) the \((G,*)\)-ideal generated by \( a^3 \). The map $\varphi: R/I\rightarrow C_3$ given by 
$$\overline{1}\mapsto e_{11}+e_{22}+e_{33} ,\quad \overline{a}\mapsto e_{12}+e_{23} \quad \mbox{ and }\quad \overline{a^2}\mapsto e_{13}$$
defines an isomorphism of $(G,*)$-algebras in the following cases:
\begin{enumerate}
    \item[1.] if  $s^\epsilon=g^-$ \black then \( R/I \cong C_{3,*}^g \);
    \item[2.] if  $s^\epsilon=h^+$ \black then \( R/I \cong C_{3}^h \). 
\end{enumerate}
    Therefore, the proof follows. 
\end{proof}

\begin{lemma}\label{lemma1-1} For $g,h\in G$ and $h \neq 1$ we have
    \begin{enumerate}
        \item $m_{((1,1)_{1^+})}\neq 0$ if and only if $U_{3,*}^1 \in \varGstar(A)$.
        \item $m_{((1,1)_{g^-})}\neq 0$ if and only if $\mathcal{G}_{2,\tau}^{g,g} \in \varGstar(A)$.
        \item $m_{((1,1)_{h^+})} \neq 0$ if and only if $\mathcal{G}_{2,\psi }^{h,h}\in \varGstar(A)$.
    \end{enumerate}
\end{lemma}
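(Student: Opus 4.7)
The plan is to follow the blueprint of Lemmas \ref{lineares1} and \ref{jordan}. For the converse in each of the three parts, I would appeal to Remark \ref{remark} together with Tables \ref{table_N3_U3}, \ref{tableG_2tau} and \ref{tableG_2gamma_psi}: the algebras $U_{3,*}^{1}$, $\mathcal{G}_{2,\tau}^{g,g}$ and $\mathcal{G}_{2,\psi}^{h,h}$ carry the characters $\chi_{((1,1)_{1^{+}})}$, $\chi_{((1,1)_{g^{-}})}$ and $\chi_{((1,1)_{h^{+}})}$ respectively with multiplicity $1$, so membership in $\varGstar(A)$ of any of them forces the corresponding multiplicity in $A$ to be at least $1$.

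For the forward direction, suppose $m_{((1,1)_{s^{\epsilon}})}\neq 0$ with $(s,\epsilon)\in\{(1,+),(g,-),(h,+)\}$ according to the case. By Proposition \ref{prop:nonzero_multiplicities} and Table \ref{tabela_mult}, the proper h.w.v.\ $[x_{1,s}^{\epsilon},x_{2,s}^{\epsilon}]$ fails to lie in $\IdGstar{A}$, so there exist $a,b\in A_{s}^{\epsilon}$ with $[a,b]\neq 0$; since $A=F+J(A)$ and scalars are central, we may assume $a,b\in J(A)\cap A_{s}^{\epsilon}$ (automatic when $s\neq 1$). Let $R$ be the $\gi$-subalgebra of $A$ generated by $1_{F},a,b$; the plan is to exhibit a $T_{G}^{*}$-ideal $I$ of $R$ together with an explicit $\gi$-isomorphism from $R/I$ to the target algebra.

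In part $1$ I would take $I$ generated by $a^{2}$, $b^{2}$, $aba$ and $bab$; the remaining length-three monomials $aab,abb,baa,bba$ are forced to vanish by $a^{2}=b^{2}=0$. The quotient $R/I$ is then spanned by $\overline{1_{F}},\overline{a},\overline{b},\overline{ab},\overline{ba}$, and the linear map sending these to $I_{6},\,e_{12}+e_{56},\,e_{23}+e_{45},\,e_{13},\,e_{46}\in U_{3,*}^{1}$ realises the required isomorphism, since the reflection involution swaps $e_{13}$ and $e_{46}$ (matching $(ab)^{*}=ba$) and both sides carry the trivial $G$-grading. For parts $2$ and $3$ I would instead take $I$ generated by $a^{2}$, $b^{2}$ and $ab+ba$, which automatically eliminates every length-three product; $R/I$ is then spanned by $\overline{1_{F}},\overline{a},\overline{b},\overline{ab}$, and the map $\overline{1_{F}}\mapsto 1,\ \overline{a}\mapsto e_{1},\ \overline{b}\mapsto e_{2},\ \overline{ab}\mapsto e_{1}e_{2}$ yields $R/I\cong \mathcal{G}_{2,\tau}^{g,g}$ in part $2$ (with skew $a,b$ of degree $g$ and $e_{1}e_{2}$ of degree $g^{2}$) and $R/I\cong \mathcal{G}_{2,\psi}^{h,h}$ in part $3$ (with symmetric $a,b$ of degree $h$ and $e_{1}e_{2}$ of degree $h^{2}=1$).

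The hard part is verifying that $R/I$ has the full expected dimension, i.e.\ that these spanning sets are linearly independent modulo $I$. Here $[a,b]\neq 0$ is decisive: in part $1$ it directly gives $\overline{ab}\neq\overline{ba}$ in $R/I$, while in parts $2$ and $3$, having forced $\overline{ab}=-\overline{ba}$, the congruence $[a,b]\equiv 2\,ab \pmod{I}$ yields $\overline{ab}\neq 0$. Linear independence of $\overline{1_{F}},\overline{a},\overline{b}$ and their independence from the two-fold products is ensured by $a,b\in J(A)$, while the hypothesis of quadratic growth---via $\Gamma_{n}^{\sharp}\subseteq\IdGstar{A}$ for all $n\geq 3$---guarantees that the length-three relations imposed in $I$ are consistent with the existing $\gi$-identities of $A$. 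The three cases are genuinely parallel, so the proof above treats them uniformly once the right $(s,\epsilon)$ is substituted.
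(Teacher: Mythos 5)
Your converse direction and the overall quotient-construction strategy are fine in spirit, but the decisive step --- that $\overline{1},\overline{a},\overline{b}$ and the degree-two products stay linearly independent modulo $I$ --- is exactly where the proposal has a genuine gap. The sentence ``$[a,b]\neq 0$ directly gives $\overline{ab}\neq\overline{ba}$ in $R/I$'' is a non sequitur: non-vanishing of $[a,b]$ in $A$ does not imply $[a,b]\notin I$, and likewise in parts (2)--(3) the congruence $[a,b]\equiv 2ab \pmod I$ only gives $\overline{ab}\neq 0$ if you already know $\overline{[a,b]}\neq 0$. For parts (2)--(3) the gap is repairable, but by an argument you did not give: since $s^{\epsilon}\neq 1^{+}$, every monomial of degree $\geq 3$ in the variables $x^{\epsilon}_{i,s}$ is a \emph{proper} polynomial, hence an identity of $A$, so $I=\mathrm{span}_F\{a^2,b^2,ab+ba\}$ consists of symmetric elements of degree $s^2$, and $ab\notin I$ because its skew component $\tfrac12[a,b]$ is nonzero (plus a grading/involution check for $\overline{a},\overline{b}$). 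In part (1) the gap is essential: symmetric variables of homogeneous degree $1$ are not constrained by properness, so products of three elements of $J(A)_1^{+}$ need not vanish and $I$ contains, e.g., $a^2b^2$ and all words containing $a^2$, $b^2$, $aba$ or $bab$. One can build a finite-dimensional unitary algebra $F+J$ of quadratic growth (a truncated Heisenberg-type $*$-algebra with $[a,b]$ central, $[a,b]^2=0$, further divided by the relation $ab+ba=a^2b^2$) in which $a,b$ are symmetric witnesses of $[x^{+}_{1,1},x^{+}_{2,1}]\notin\Idd(A)$ but $ab+ba\in I$, so $R/I$ collapses to dimension at most $4$ and your map to $U_{3,*}^{1}$ is not even well defined. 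So for an arbitrary choice of $a,b$ the construction can fail, and ``consistency with quadratic growth'' does not substitute for a proof of non-collapse.

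For comparison, the paper does not use a subquotient construction here at all: for homogeneous degree $1$ it invokes the known involution-case results \cite[Lemmas 4.3 and 4.4]{Wesley} to get $U_{3,*}^{1}$ (resp.\ $\mathcal{G}_{2,\tau}^{1,1}$), for $|s|=2$ it passes to the $(\mathbb{Z}_2,*)$-subalgebra $F+J(A)_s$ and cites \cite[Lemma 8.4]{Nascimento}, and for $|s|>2$ it proves $\varGstar$-membership directly by a degree-by-degree comparison of $T_G^{*}$-ideals: degrees $\geq 3$ are handled by quadratic growth, degree $1$ by a support argument, and degree $2$ by checking, against the explicit identities of $\mathcal{G}_{2,\tau}^{s,s}$ (resp.\ $\mathcal{G}_{2,\psi}^{s,s}$), that any multihomogeneous $f\in\Gamma_2^{\sharp}\cap\Idd(A)$ already lies in the target's ideal. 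If you want to keep your unified construction, you must either make a more careful choice of $a,b$ or replace part (1) by an argument of the paper's type; as written, part (1) does not go through.
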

\begin{proof}
    Assume that $m_{((1,1)_{1^\epsilon})}\neq 0$ with $\epsilon\in \{+,-\}$. By Proposition \ref{prop:nonzero_multiplicities} we have $[x_{1,1}^\epsilon,x_{2,1}^\epsilon]\neq 0$ and then,  by \cite[Lemmas 4.3 and 4.4]{Wesley}, it follows that
    
    \begin{enumerate}
        \item[1.] if $\epsilon=+$ then $U_{3,*}^1\in \varGstar(A)$;
        \item[2.] if $\epsilon=-$ then $\mathcal{G}_{2,\tau}^{1,1} \in \varGstar(A)$.
    \end{enumerate}

    Now assume $m_{((1,1)_{s^\epsilon})} \neq 0$ for $s \in G^\times$ and $\epsilon \in \{+,-\}$.  By Proposition \ref{prop:nonzero_multiplicities}, we have $[x_{1,s}^\epsilon,x_{2,s}^\epsilon ]\notin \textnormal{Id}^\sharp (A)$.

        Suppose that $|s|=2$. Hence, $B=F+J(A)_s$ is a $\gi$-subalgebra of $A$ with induced $\mathbb{Z}_2$-grading. Then, by \cite[Lemma 8.4]{Nascimento}, we have the following

\begin{enumerate}
    \item[1.] if $\epsilon=-$ we have $\mathcal{G}_{2,\tau}^{s,s}\in \varGstar(A)$;
    \item[2.] if $\epsilon=+$ we have $\mathcal{G}_{2,\psi}^{s,s}\in \varGstar(A)$.
\end{enumerate}

   Therefore, we now assume that $|s|>2$. 
   
   Suppose that $\epsilon=-$. Since $A$ is a unitary algebra with quadratic codimension growth, it follows that
    $$ \Gamma_{n}^\sharp=\Gamma_{n}^\sharp\cap \Idd(A) = \Gamma_n^{\sharp} \cap \Idd(\mathcal{G}_{2,\tau}^{s,s}), \quad \textnormal{for all} \quad n\geq 3. $$
    Furthermore, since $[x_{1,s}^-,x_{2,s}^-]\notin \textnormal{Id}^\sharp (A)$ we have $\{1^+,s^-, {(s^2)}^-\}\subset \text{supp}^*(A)$. As $\text{supp}^*(\mathcal{G}_{2,\tau}^{s,s})=\{1^+,s^-, {(s^2)}^-\}$ then
    $$ \Gamma_1^{\sharp} \cap \Idd(A) \subseteq \Gamma_1^{\sharp}\cap \Idd(\mathcal{G}_{2,\tau}^{s,s}). $$
    We now analyze the proper multilinear identities of degree 2 of $A$.  Let $f\in \Gamma_2^{\sharp}\cap \Idd(A)$ a multihomogeneous identity. By Lemma \ref{T-ideal_G_2tau} the polynomials
    $$[x_{1,1}^+,y],\quad  x_{1,s}^-x_{2,s^2}^-, \quad x_{1,s^2}^-x_{2,s^2}^-,\quad  (x_{1,s}^-)^2,  \quad x_{1,1}^-,\quad  x_{1,s}^+, \quad x_{1,s^2}^+, 
    \quad x_{1,h}  $$
are identities of $\mathcal{G}_{2,\tau}^{s,s}$, for all
$y\in \{x_{2,1}^+,x_{2,s}^-,x_{2,s^2}^-\}$ and $h\in G-\{1,s,s^2\}$. Hence, either $f\in \Idd(\mathcal{G}_{2,\tau}^{s,s})$ or $f=\alpha x_{1,s}^-x_{2,s}^-+ \beta [x_{1,s}^-,x_{2,s}^-]\in \Gamma_{(2_{s^-})}$ with $\alpha,\beta \neq 0.$ In the second case, \[
    \gamma x_{1,s}^-x_{2,s}^- - x_{2,s}^-x_{1,s}^-\equiv 0 \mbox{ on }A, \quad \text{where } \gamma = \frac{\alpha+\beta}{\beta}.
\]
Applying the involution on the polynomial above, we obtain  $\gamma x_{2,s}^-x_{1,s}^- -x_{1,s}^-x_{2,s}^-\equiv 0$ on $A$.  Combining both relations, we obtain $  (\gamma^2-1) x_{1,s}^-x_{2,s}^- \equiv 0$  on $A$. Since $x_{1,s}^-x_{2,s}^- \not\equiv 0$ on $A$, then $\gamma^2=1$.

Since $[x_{1,s}^-,x_{2,s}^-]$ is not an identity of $A$, then $\gamma=-1$ and $f=-\beta x_{1,s}^-\circ x_{2,s}^- \in \Idd(\mathcal{G}_{2,\tau}^{s,s})$. Thus,   $$ \Gamma_2^{\sharp}\cap\Idd(A) \subseteq \Gamma_2^{\sharp} \cap \Idd(\mathcal{G}_{2,\tau}^{s,s}). $$
    and so $\mathcal{G}_{2,\tau}^{s,s}\in \varGstar(A)$. 
    
    Similarly, if $\epsilon=+$ we can use Lemma \ref{T-ideal_G_2gamma} and the previous argument to prove that $\mathcal{G}_{2,\psi}^{s,s}\in \varGstar(A)$. 
    
    The converse follows from  Remark~\ref{remark} and Tables \ref{table_N3_U3}, \ref{tableG_2tau},  \ref{tableG_2gamma_psi}.
\end{proof}

\begin{lemma}\label{Lemma1+} For $g ,h\in G$ with $h\neq 1$ we have 
    \begin{enumerate}
        \item $m_{((1)_{1^+}, (1)_{h^+})}\neq 0$ if and only if $U_{3,*}^h \in \varGstar(A)$.
        \item $m_{((1)_{1^+}, (1)_{g^-})}\neq 0$ if and only if $N_{3,*}^g \in \varGstar(A)$.
    \end{enumerate}
\end{lemma}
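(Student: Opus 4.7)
The converse directions of both parts are immediate consequences of Remark \ref{remark} applied to Table \ref{table_N3_U3}: the multiplicity of $\chi_{((1)_{1^+})}\otimes\chi_{((1)_{h^+})}$ in the proper cocharacter of $U_{3,*}^h$ equals $1$, as does the multiplicity of $\chi_{((1)_{1^+})}\otimes\chi_{((1)_{g^-})}$ in that of $N_{3,*}^g$, so membership of the corresponding algebra in $\varGstar(A)$ forces the respective multiplicity in $A$ to be at least $1$.

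For the forward direction I would treat both parts in parallel, setting $B=U_{3,*}^h$ with $(s,\epsilon)=(h,+)$ in part 1, and $B=N_{3,*}^g$ with $(s,\epsilon)=(g,-)$ in part 2. By Proposition \ref{prop:nonzero_multiplicities} and Table \ref{tabela_mult}, the hypothesis gives $[x_{1,1}^+,x_{2,s}^\epsilon]\notin\IdGstar{A}$, so there exist $b\in A_1^+$ and $a\in A_s^\epsilon$ with $[b,a]\ne 0$. Using $A=F+J(A)$ and the centrality of $1_F$, I may replace $b$ by its $J(A)$-component and assume $b\in J(A)_1^+$; because either $s\ne 1$ or $\epsilon=-$ in our two cases, one also has $a\in J(A)_s^\epsilon$. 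A direct check on the action of $*$ shows that $[b,a]$ is a nonzero element of $A_s^{-\epsilon}$ (skew when $\epsilon=+$, symmetric when $\epsilon=-$), so $\{1^+, s^+, s^-\}\subseteq\supp^*(A)$, and inspecting the explicit bases of $B$ yields $\supp^*(B)=\{1^+, s^+, s^-\}$ as well.

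The plan is then to conclude $B\in\varGstar(A)$ by establishing $\IdGstar{A}\subseteq\IdGstar{B}$ componentwise. For $n\ge 3$ both ideals contain all of $\Gamma_n^\sharp$ by quadratic codimension growth of $A$ and $B$; for $n=1$ the support inclusion just established is exactly the required inclusion of degree-$1$ pieces. For $n=2$ I would decompose $\Gamma_2^\sharp$ as in (\ref{decomposition_Gamma}) and argue per multihomogeneous component using Table \ref{table_N3_U3}: every component other than $\Gamma_{(1_{1^+},1_{s^\epsilon})}$ lies entirely in $\IdGstar{B}$ by the enumeration of nonzero cocharacters of $B$, so the required inclusion is trivial on those components; the distinguished component is one-dimensional, spanned by $[x_{1,1}^+,x_{2,s}^\epsilon]$, and the hypothesis tells us precisely that this generator lies outside $\IdGstar{A}$, matching its status in $\IdGstar{B}$. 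The delicate step in this strategy is the exhaustive enumeration of the degree-$2$ cocharacters of $B$; once Table \ref{table_N3_U3} guarantees that $\Gamma_{(1_{1^+},1_{s^\epsilon})}$ is the only degree-$2$ component carrying a nonzero multiplicity in $B$, the remainder of the verification reduces to a routine component-by-component check.
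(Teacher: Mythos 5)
Your proof is correct and follows essentially the same route as the paper: the converse via Remark~\ref{remark} and Table~\ref{table_N3_U3}, and the forward direction by comparing $\Gamma_n^\sharp\cap\IdGstar{A}$ with $\Gamma_n^\sharp\cap\IdGstar{B}$ degree by degree, using the support inclusion for $n=1$, quadratic growth for $n\ge 3$, and the fact that the only degree-$2$ component surviving in $B$ is the one-dimensional span of $[x_{1,1}^+,x_{2,s}^\epsilon]$, which by hypothesis is not an identity of $A$. The only minor deviations are that you treat the case $g=1$ of part (2) uniformly, where the paper instead cites an external lemma, and that you read off the degree-$2$ identities of $B$ from the cocharacter table rather than from the explicit generators in Lemma~\ref{T-ideal_N_e_U}; both are harmless.
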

\begin{proof}

  First of all, we notice that, if $m_{((1)_{1^+}, (1)_{1^-})}\neq 0$, by \cite[Lemmas 4.3]{Wesley}, we have $N_{3,*}^1\in \textnormal{var}^\sharp(A)$.  So we assume $m_{((1)_{1^+}, (1)_{h^\epsilon})}\neq 0$ where $h\neq 1$ and $\epsilon \in \{+, -\}$. By Proposition  \ref{prop:nonzero_multiplicities} we have $[x_{1,1}^+, x_{2,h}^\epsilon]\notin  \Idd(A)$. 
    
    Assume that $\epsilon=+$. Since $U_{3,*}^h$ and $A$ are unitary $(G,*)$-algebras with quadratic codimension growth then we have
    $$ \Gamma_{n}^\sharp\cap \Idd(A) = \Gamma_n^{\sharp} \cap \Idd(U_{3,*}^h), \quad \textnormal{for all} \quad n\geq 3. $$
    Furthermore, since $m_{((1)_{1^+},(1)_{h^+})} \neq 0$ on $A$, we have
         $\{1^+,h^+,h^-\}\subset \text{supp}^*(A)$. As  $\text{supp}^*(U_{3,*}^h)=\{1^+,h^+,h^-\}$  it follows that
    $$ \Gamma_1^{\sharp} \cap \Idd(A) \subseteq \Gamma_1^{\sharp}\cap \Idd(U_{3,*}^h). $$
    
    
    Therefore, we now analyze the proper multilinear identities of degree $2$ of $A$. Let $f \in \Gamma_2^\sharp \cap\Idd(A)$ a multihomogeneous identity. By Lemma \ref{T-ideal_N_e_U} the polynomials $$x_{1,1}^-, \quad [x_{1,1}^+,x_{2,1}^-], \quad x_{1,h}^+x_{2,h}^+, \quad x_{1,h}^-x_{2,h}^-, \quad x_{1,h}^+x_{2,h}^-, \quad x_{1,r}^\sigma$$ are identities of $U_{3,*}^h$, where $r\in G-\{1,h\}$. Hence, it follows that either $f\in \Idd(U_{3,*}^h)$ or $f \in \Gamma_{(1_{1^+}, 1_{h^+})}$. Since $[x_{1,1}^+,x_{1,h}^+]$ is not an identity of $U_{3,*}^h$, then we have $f\in \Idd(U_{3,*}^h)$. Therefore, 
$\Gamma_2^{\sharp}\cap\Idd(A) \subseteq \Gamma_2^{\sharp} \cap \Idd(U_{3,*}^h) $ and so $U_{3,*}^h\in \varGstar(A)$. 
    
   In a similar way, in case $m_{((1)_{1^+},(1)_{h^-})}\neq 0$, we can use Lemma \ref{T-ideal_N_e_U} and the previous argument to conclude that $N_{3,*}^h\in \varGstar(A)$.

Finally, the converse of items (1) and (2) follows by  Remark \ref{remark} and Table \ref{table_N3_U3}. 
\end{proof}

\begin{lemma}\label{ultima_mult_1} For $g,h\in G^\times$, $u\in G$, with $g\neq h$, $h \neq u$, we have 
\begin{enumerate}
    \item  $m_{((1)_{g^+},(1)_{h^+})}= 1$ if and only if $\mathcal{G}_{2,\psi}^{g,h}\in \varGstar(A)$ or $W_{\nu_1}^{g,h} \in \varGstar(A)$.
    
    \item  $m_{((1)_{u^-},(1)_{h^-})}=1$ if and only if $\mathcal{G}_{2,\tau}^{u,h}\in \varGstar(A)$ or $W_{\nu_2}^{u,h} \in \varGstar(A)$.
    
    \item  $m_{((1)_{u^-},(1)_{g^+})}=1$ if and only if $\mathcal{G}_{2,\gamma}^{u,g}\in \varGstar(A)$ or $W_{\nu_3}^{u,g} \in \varGstar(A)$.
\end{enumerate}
\end{lemma}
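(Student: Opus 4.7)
The argument follows the template of Lemmas \ref{lemma1-1} and \ref{Lemma1+}. I will focus on item (1); items (2) and (3) are obtained by the same strategy after permuting the roles of symmetric and skew variables appropriately.

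For the forward direction of (1), assume $m_{((1)_{g^+},(1)_{h^+})}=1$. By Table \ref{tabela_mult}, the space $\Gamma_{(1_{g^+},1_{h^+})}$ admits exactly the two proper h.w.v.'s $[x^+_{1,g},x^+_{2,h}]$ and $x^+_{1,g}x^+_{2,h}$, so by Proposition \ref{prop:nonzero_multiplicities} there is a unique (up to scalar) nonzero identity
\[
f=\alpha[x^+_{1,g},x^+_{2,h}]+\beta x^+_{1,g}x^+_{2,h}\equiv 0
\]
on $A$. Since $\Idd(A)$ is stable under the involution, a short computation shows that $f^{\ast}=-(\alpha+\beta)[x^+_{1,g},x^+_{2,h}]+\beta x^+_{1,g}x^+_{2,h}$ is also an identity of $A$ and, by the uniqueness, must be proportional to $f$. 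Comparing coefficients forces either $\beta=0$, giving $[x^+_{1,g},x^+_{2,h}]\in \Idd(A)$ (commutator case), or $\beta=-2\alpha$, giving $x^+_{1,g}\circ x^+_{2,h}\in \Idd(A)$ (Jordan case).

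Next, I would show by $T_G^*$-ideal comparison that the commutator case forces $W_{\nu_1}^{g,h}\in \varGstar(A)$ and the Jordan case forces $\mathcal{G}_{2,\psi}^{g,h}\in \varGstar(A)$. Since $A$ and both target algebras have quadratic codimension growth, $\Gamma_n^{\sharp}\subseteq \Idd(\cdot)$ for $n\ge 3$, so the comparison reduces to $n=1,2$. For $n=1$, the hypothesis $x^+_{1,g}x^+_{2,h}\notin \Idd(A)$ produces $a\in J(A)^+_g$ and $b\in J(A)^+_h$ with $ab\neq 0$: in the commutator case $(ab)^{\ast}=ba=ab$ so $(gh)^+\in \supp^{\ast}(A)$, while in the Jordan case $(ab)^{\ast}=ba=-ab$ so $(gh)^-\in \supp^{\ast}(A)$, giving in either situation the containment $\supp^{\ast}(\text{target})\subseteq \supp^{\ast}(A)$ required. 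For $n=2$, I would compare the list of degree-$2$ generators of $\Idd(W_{\nu_1}^{g,h})$ (resp.\ $\Idd(\mathcal{G}_{2,\psi}^{g,h})$) with the multihomogeneous identities of $A$, which by (\ref{multiplicities}) and Table \ref{tabela_mult} are already tightly constrained; the only component that genuinely requires the case hypothesis is $\Gamma_{(1_{g^+},1_{h^+})}$ itself.

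The converse follows from Remark \ref{remark} applied to Tables \ref{tableG_2gamma_psi} and \ref{table_W}: both $\mathcal{G}_{2,\psi}^{g,h}$ and $W_{\nu_1}^{g,h}$ realize $m_{((1)_{g^+},(1)_{h^+})}=1$, so membership of either in $\varGstar(A)$ forces this multiplicity to be at least $1$ in $A$. Items (2) and (3) follow by the same dichotomy, replacing commutator/Jordan with the analogous relations for skew-skew and skew-symmetric pairs and invoking Lemma \ref{T-ideal_G_2tau} and Lemma \ref{T-ideal_G_2gamma}, respectively. The main technical obstacle is the degree-$2$ comparison, particularly handling the subcase $h=g^{-1}$ (i.e.\ $gh=1$) separately from $gh\neq 1$, since the support and the complete list of generating identities of the target algebras differ in these two situations.
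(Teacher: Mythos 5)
Your proposal is correct, and its first half coincides with the paper's own argument: the paper also uses Proposition \ref{prop:nonzero_multiplicities} together with the gradedness of the involution to reduce to the dichotomy ``the commutator is the identity'' versus ``the Jordan product is the identity'' (your proportionality computation giving $\beta=0$ or $\beta=-2\alpha$ is a repackaging of the paper's step $\gamma^2=1$, $\gamma=-1$). Where you genuinely diverge is the membership step. The paper does not compare $T_G^*$-ideals here: from the elements $a\in J(A)_s^\epsilon$, $b\in J(A)_t^\delta$ with $ab\neq 0$ and $ab=\pm ba$ it forms the $(G,*)$-subalgebra $R$ generated by $1_F,a,b$, factors out the ideal $I$ generated by $a^2,b^2$ (spanned by $a^2,b^2$ since all proper polynomials of degree $\geq 3$ are identities of $A$), and exhibits explicit $(G,*)$-isomorphisms $R/I\cong W_{\nu_1}^{g,h}$, $W_{\nu_2}^{u,h}$, $W_{\nu_3}^{u,g}$ in the commutator case and $R/I\cong\mathcal{G}_{2,\psi}^{g,h}$, $\mathcal{G}_{2,\tau}^{u,h}$, $\mathcal{G}_{2,\gamma}^{u,g}$ in the Jordan case, using the tables only for the converse. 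Your route instead proves $\Idd(A)\subseteq\Idd(B)$ for the appropriate target $B$ by a degree-by-degree comparison, in the style the paper adopts for Lemmas \ref{lemma1-1}, \ref{Lemma1+} and \ref{mult_2}; this does work: quadratic growth of both sides disposes of degree $\geq 3$, your element $ab$ (symmetric of degree $gh$ in the commutator case, skew in the Jordan case) yields $\supp^*(B)\subseteq\supp^*(A)$ and hence the degree-one inclusion, and in degree two the inclusion is automatic outside $\Gamma_{(1_{g^+},1_{h^+})}$ because, by Tables \ref{tableG_2tau}, \ref{tableG_2gamma_psi} and \ref{table_W}, each target has that single nonzero degree-two proper component, while inside it the hypothesis $m=1$ plus the case assumption pins the identity space to the commutator (resp.\ Jordan) line. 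Be aware that this last reduction is justified by the targets' cocharacters, not, as you write, by the bound (\ref{multiplicities}) on the multiplicities of $A$, which plays no role in the inclusion you need. The trade-off is clear: your comparison leans on the complete degree $\leq 2$ identity data of the target algebras (including the separate bookkeeping when $gh=1$, which you rightly flag as the delicate subcase), whereas the paper's explicit quotient construction uses only the two elements $a,b$, treats $gh=1$ and $gh\neq 1$ uniformly, and defers the tables entirely to the converse.
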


\begin{proof}

    Assume $m_{((1)_{s^\epsilon},(1)_{t^\delta})}= 1$, where $s^\epsilon \in \{g^+,u^-\}$, $t^\delta\in \{h^+,h^-,g^+\}$, with $(s^\epsilon, t^\delta)= (u^- , g^+)$ or $s\neq t$ and $\epsilon = \delta$. Since $x_{1,s}^\epsilon x_{2,t}^\delta$ and $[x_{1,s}^\epsilon ,x_{2,t}^\delta]$ are proper h.w.v.'s in $\Gamma_{((1)_{s^\epsilon},(1)_{t^\delta})}$, by Proposition \ref{prop:nonzero_multiplicities} there exists $(\alpha,\beta)\neq (0,0)$ such that $\alpha x_{1,s}^\epsilon x_{2,t}^\delta +\beta [x_{1,s}^\epsilon ,x_{2,t}^\delta] \equiv 0$ on $A$. Moreover, since the involution is graded, we can consider only the  cases 
    \begin{enumerate}
        \item[1.]  $\alpha= 0$ and $\beta\neq 0$;
        \item[2.]  $\alpha \neq 0$ and $\beta\neq0$.
    \end{enumerate}
    
    In the first case, note that $x_{1,s}^\epsilon x_{2,t}^\delta \not\equiv  0$ and $[x_{1,s}^\epsilon ,x_{2,t}^\delta]\equiv 0$, hence there exist $a\in J(A)_{s}^\epsilon$ and $b\in J(A)_{t}^\delta$ such that $ab \neq 0$ and $ab=ba$. Let $R$ be the $\gi$-subalgebra of $A$ generated by $1_F, a, b$ and $I$ the $\gi$-ideal generated by $a^2$ and $b^2$. Since $A$ has quadratic codimension growth, we have $\Gamma_{n}^{\sharp} \subseteq \IdGstar{A}$, for all $n\geq 3$ and so $I$ is linearly generated by $a^2$ and $b^2$. 
    Moreover, since $\supp(I)=\{t^2,s^2\}$ , we have $a,b, ab \notin I$. Now, we observe that the map $\varphi: R/I \rightarrow W$ given by
    $$\overline{1_F} \mapsto e_{11}+\cdots +e_{44}, \quad \overline{a} \mapsto e_{12}+e_{34}, \quad \overline{b}\mapsto e_{13}+e_{24}, \quad \overline{ab} \mapsto e_{14}$$ defines an isomorphism of $\gi$-algebras in the following cases
    \begin{enumerate}
        \item[1.] If $s^\epsilon=g^+$ and $t^\delta=h^+$ then $R/I \cong W_{\nu_1}^{g,h}$;
        \item[2.] If $s^\epsilon=u^-$ and $t^\delta=h^-$ then $R/I \cong W_{\nu_2}^{u,h}$;
        \item[3.] If $s^\epsilon=u^-$ and $t^\delta=g^+$ then $R/I \cong W_{\nu_3}^{u,g}$.
    \end{enumerate}  

    In the second case, note that $\gamma x_{1,s}^\epsilon x_{2,t}^\delta\equiv x_{2,t}^\delta x_{1,s}^\epsilon$ on $A$, where $\gamma=\frac{\alpha+\beta}{\beta}$. Now applying the involution on both sides of the equation, we obtain 
  $$(\gamma^2-1) x_{1,s}^\epsilon x_{2,t}^\delta\equiv 0\mbox{ on }A$$ and therefore   $\gamma^2=1$. Since $\alpha \neq 0$, we must have $\gamma=-1$ and then $  x_{1,s}^\epsilon x_{2,t}^\delta+ x_{2,t}^\delta x_{1,s}^\epsilon\equiv 0$ on $A$.

    Now, we consider $a\in J(A)_s^\epsilon$ and $b\in J(A)_{t}^\delta$ such that $ab\neq 0$. By the previous identity, we have $ab= -ba$. Consider $R$ the $\gi$-subalgebra of $A$ generated by $1_F,a,b$ and $I$ the $\gi$-ideal of $A$ generated by $a^2$ and $b^2$. It is easy to check that the map $\psi: R/I\rightarrow \mathcal{G}_2$ given by
    $$\overline{1_F} \mapsto 1, \quad \overline{a}\mapsto e_1, \quad \overline{b}\mapsto e_2, \quad \overline{ab} \mapsto e_1e_2$$ is an isomorphism of $(G,*)$-algebras in the following cases: 
    \begin{itemize}
        \item[1.]if $s^\epsilon=g^+$ and $t^\delta=h^+$ then $R/I \cong \mathcal{G}_{2,\psi}^{g,h}$;
        \item[2.]if $s^\epsilon=u^-$ and $t^\delta=h^-$ then $R/I \cong \mathcal{G}_{2,\tau}^{u,h}$;
        \item[3.]if $s^\epsilon=u^-$ and $t^\delta=g^+$  then $R/I \cong \mathcal{G}_{2,\gamma}^{u,g}$.
    \end{itemize}

    For the converse, it is enough to use Remark \ref{remark} and the respectively proper cocharacters in Tables \ref{tableG_2tau}, \ref{tableG_2gamma_psi} and \ref{table_W}.
\end{proof}
    
Now, we analyze the cases where the multiplicities in the previous lemma are equal to $2$.

\begin{lemma}\label{mult_2} For $g,h\in G^\times$, $u \in G$ with $g\neq h$ and $h \neq u$, we have 
\begin{enumerate}
    \item  $m_{((1)_{g^+},(1)_{h^+})}= 2$ if and only if $\mathcal{G}_{2,\psi}^{g,h}\oplus W_{\nu_1}^{g,h} \in \varGstar(A)$.
    \item  $m_{((1)_{u^-},(1)_{h^-})}=2$ if and only if $\mathcal{G}_{2,\tau}^{u,h}\oplus W_{\nu_2}^{u,h} \in \varGstar(A)$.
    \item  $m_{((1)_{u^-},(1)_{g^+})}=2$ if and only if $\mathcal{G}_{2,\gamma}^{u,g}\oplus W_{\nu_3}^{u,g} \in \varGstar(A)$.
\end{enumerate}
\end{lemma}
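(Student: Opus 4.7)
The converse direction is immediate: by Remark~\ref{remark} and Table~\ref{table_sum1}, the multiplicity of $\chi_{((1)_{s^\epsilon})}\otimes\chi_{((1)_{t^\delta})}$ in each of the three relevant direct sums equals $2$, so if such a sum lies in $\varGstar(A)$ then $m_{((1)_{s^\epsilon},(1)_{t^\delta})}(A)\geq 2$, and the upper bound from~(\ref{multiplicities}) forces equality.

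For the forward direction, the plan is to start from the observation that $m_{((1)_{s^\epsilon},(1)_{t^\delta})}=2$ implies both proper highest weight vectors $x_{1,s}^\epsilon x_{2,t}^\delta$ and $[x_{1,s}^\epsilon,x_{2,t}^\delta]$ are linearly independent modulo $\Idd(A)$; equivalently, the four multilinear polynomials $x_{1,s}^\epsilon x_{2,t}^\delta$, $x_{2,t}^\delta x_{1,s}^\epsilon$, $[x_{1,s}^\epsilon,x_{2,t}^\delta]$, and $x_{1,s}^\epsilon\circ x_{2,t}^\delta$ are all non-identities of $A$. The first concrete step is to produce a single pair $(a,b)\in J(A)_s^\epsilon\times J(A)_t^\delta$ for which $ab$, $ba$, $[a,b]$, and $a\circ b$ are simultaneously nonzero; each condition excludes a proper Zariski-closed subset of the irreducible vector space $J(A)_s^\epsilon\times J(A)_t^\delta$, so their union is still proper over the infinite field $F$, and any point outside the union works. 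For such a pair, $ab$ and $ba$ are linearly independent in $A$.

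Next, I would let $R$ be the $\gi$-subalgebra of $A$ generated by $1_F,a,b$. Quadratic growth gives $\Gamma_n^\sharp\subseteq\Idd(A)$ for all $n\geq 3$, so every monomial of length at least three in $a,b$ vanishes in $A$ and $R$ is spanned by $\{1_F,a,b,a^2,b^2,ab,ba\}$. Define the $\gi$-ideals $I_W$ and $I_{\mathcal{G}}$ of $R$ generated by $\{a^2,b^2,ab-ba\}$ and $\{a^2,b^2,ab+ba\}$ respectively. The generators are homogeneous, and the involution maps each set to itself up to a global sign (using $(ab)^*=\epsilon\delta\,ba$), so these are genuine $\gi$-ideals; moreover, all higher products of generators with elements of $R$ vanish by the degree-three annihilation, so each ideal equals the $F$-span of its three generators.

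Consequently, $R/I_W$ and $R/I_{\mathcal{G}}$ are four-dimensional $\gi$-algebras with basis $\{\bar 1,\bar a,\bar b,\overline{ab}\}$. A direct verification of the multiplication and of the sym/skew type of $\overline{ab}$, which depends on the sign $\epsilon\delta$ and on whether $ab-ba$ or $ab+ba$ has been identified to zero, establishes the isomorphisms $R/I_W\cong W_{\nu_i}^{s,t}$ and $R/I_{\mathcal{G}}\cong\mathcal{G}_{2,*}^{s,t}$ with matching pairs $(\nu_i,*)=(\nu_1,\psi),(\nu_2,\tau),(\nu_3,\gamma)$ for items (1), (2), (3). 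Since both quotients lie in $\varGstar(R)\subseteq\varGstar(A)$ and $\gi$-varieties are closed under finite direct sums, we conclude $\mathcal{G}_{2,*}^{s,t}\oplus W_{\nu_i}^{s,t}\in\varGstar(A)$. The hard part will be this final case-by-case matching: the symmetry type of $\overline{ab}$ in each quotient must match the involution on the degree-$st$ component of the appropriate target algebra, and this splits according to $(\epsilon,\delta)=(+,+)$, $(-,-)$, or $(-,+)$.
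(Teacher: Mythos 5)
Your converse direction coincides with the paper's, and your forward direction is a genuinely different route: the paper never constructs subalgebras here, but instead fixes the candidate sum $B$ and proves $\Idd(A)\subseteq\Idd(B)$ degree by degree, using $\Gamma_n^\sharp\subseteq\Idd(A)\cap\Idd(B)$ for $n\ge 3$, a support argument in degree $1$, and in degree $2$ the explicit generators of $\Idd(B)$ from Lemma~\ref{Direct_sum} together with the fact that $m=2$ makes $x_{1,s}^\epsilon x_{2,t}^\delta$ and $[x_{1,s}^\epsilon,x_{2,t}^\delta]$ linearly independent modulo $\Idd(A)$. Your pointwise construction could in principle work, but as written it has gaps. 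First, the step ``for such a pair, $ab$ and $ba$ are linearly independent'' does not follow from the four nonvanishing conditions you impose: $ba=2ab$ satisfies $ab\neq 0$, $ba\neq 0$, $[a,b]\neq 0$, $a\circ b\neq 0$. The statement is in fact true here, but only because of the graded involution: if $ba=\mu ab$ with $ab\neq 0$, applying $*$ and using $(ab)^*=\epsilon\delta\, ba$ forces $\mu^2=1$, and $\mu=\pm 1$ is excluded by $[a,b]\neq 0$ and $a\circ b\neq 0$. Without this one-line argument the deduction is a non sequitur, and the independence of $ab$ and $ba$ is exactly what you need to get $\overline{ab}\neq 0$ in the two quotients when $s\neq t$.

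The more serious gap is the degenerate case allowed by item (3): the hypotheses only require $g\neq h$ and $h\neq u$, so $u=g$ is permitted, i.e.\ $s^\epsilon=g^-$, $t^\delta=g^+$ (this is the case $\mathcal{G}_{2,\gamma}^{g,g}\oplus W_{\nu_3}^{g,g}$, which the paper's proof explicitly covers). Then $ab$, $a^2$ and $b^2$ all have homogeneous degree $g^2$, and since $(ab)^*=-ba$ the symmetric part of $ab$ is $\tfrac12[a,b]$ while $I_{\mathcal{G}}=\textnormal{span}_F\{a^2,b^2,ab+ba\}$ has symmetric degree-$g^2$ part $\textnormal{span}_F\{a^2,b^2\}$. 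Hence $\overline{ab}\neq 0$ in $R/I_{\mathcal{G}}$ requires $[a,b]\notin\textnormal{span}_F\{a^2,b^2\}$, a condition that follows neither from the independence of $ab$ and $ba$ nor from your genericity argument: the locus where it fails is not obviously Zariski-closed (the span varies with $(a,b)$), and its properness, i.e.\ the existence of even one good pair, is precisely what would have to be proved and is not addressed. In the nondegenerate cases ($s\neq t$, so the degree-$st$ component of each ideal is the line spanned by $ab\mp ba$) your scheme does go through once the independence step is justified, so the gap is localized; but as it stands item (3) is not proved in full, which is exactly the kind of difficulty the paper's T-ideal comparison avoids.
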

 
\begin{proof}
Assume $m_{((1)_{s^\epsilon},(1)_{t^\delta})}= 2$,  for some $s^\epsilon \in \{g^+,u^-\}$, $t^\delta\in \{h^+,h^-,g^+\}$, with $(s^\epsilon, t^\delta)= (u^- , g^+)$ or $s\neq t$ and $\epsilon = \delta$. Let $B$ be the corresponding $(G,*)$-algebra associated to the multiplicity $m_{((1)_{s^\epsilon},(1)_{t^\delta})}$ described in the statement $(1),(2)$ and $(3)$ above. Since $B$ has quadratic codimension growth,
    $$\Gamma_{n}^\sharp= \Gamma_n^{\sharp} \cap \IdGstar{A} =\Gamma_n^{\sharp} \cap \IdGstar{B},\quad \text{for all} \quad n \geq 3.$$

  Observe that $\supp^*(B)=\{1^+,s^\epsilon,t^\delta,(st)^+,(st)^-\}$. Since $A$ is a unitary algebra with $m_{((1)_{s^\epsilon},(1)_{t^\epsilon})}\neq 0$ then by Proposition \ref{prop:nonzero_multiplicities} we have $[x_{1,s}^\epsilon,x_{1,t}^\delta]\notin \IdGstar{A}$. Thus, it follows that  $\supp^*(B)\subseteq \supp ^*(A)$ and $$\Gamma_1^{\sharp}\cap\IdGstar{A}\subseteq\Gamma_1^{\sharp}\cap\IdGstar{B}.$$

    Now, we consider $f$ a multilinear polynomial in $\Gamma_2^{\sharp}\cap\Idd(A)$. Without loss of generality, we may assume that $f$ is multihomogeneous. According to Lemma \ref{Direct_sum}, we have the following cases
    \begin{itemize}
        \item[1.] If $s^\epsilon=g^+$ and $ t^\delta=h^+$ then either $f \in \Idd(\mathcal{G}_{2,\psi}^{g,h}\oplus W_{\nu_1}^{g,h})$ or $f=\alpha x_{1,g}^+ x_{2,h}^++\beta[x_{1,g}^+ ,x_{2,h}^+]$;
        \item[2.] If $s^\epsilon=u^-$ and $ t^\delta=h^-$ then either $f \in \Idd(\mathcal{G}_{2,\tau}^{u,h}\oplus W_{\nu_2}^{u,h})$ or $f=\alpha x_{1,u}^- x_{2,h}^-+\beta[x_{1,u}^- ,x_{2,h}^-]$;
        \item[3.] If $s^\epsilon=u^-$ and $ t^\delta=g^+$ then either $f \in \Idd(\mathcal{G}_{2,\gamma}^{u,g}\oplus W_{\nu_3}^{u,g})$ or $f=\alpha x_{1,u}^- x_{2,g}^++\beta[x_{1,u}^- ,x_{2,g}^+]$.
    \end{itemize}

     Since we are assuming that $m_{((1)_{s^\epsilon},(1)_{t^\delta})}=2$,
Proposition~\ref{prop:nonzero_multiplicities} implies that no linear combination
of the proper highest weight vectors
$x_{1,s}^\epsilon x_{2,t}^\delta$ and $[x_{1,s}^\epsilon,x_{2,t}^\delta]$
can be an identity of $A$. Therefore, the second case in each item above
cannot occur.

    Thus, we have proved that $\Gamma_n^\sharp \cap \textnormal{Id}^\sharp (A)\subseteq \Gamma_n^\sharp\cap \textnormal{Id}^\sharp (B) $, for all $n$. Since $A$ is unitary, the proof follows. 
    
    The converse follows from Remark \ref{remark} and Tables \ref{table_sum1} and \ref{table_sum2}.
\end{proof}
 \black

For an algebra $A$, recall that the algebra $\widetilde{A} = A \times F$ endowed with the product
\[
(a_1,\alpha_1)(a_2,\alpha_2)
= \big(a_1a_2 + \alpha_2 a_1 + \alpha_1 a_2,\; \alpha_1\alpha_2\big),
\]
is the algebra obtained from $A$ by adjoining a unity. If $A$ is a $(G,*)$-algebra with involution ${*}$, then $\widetilde{A}$ becomes a $(G,*)$-algebra with $G$-grading given by setting $\widetilde{A}_1 = (A_1, F)$ and { $\widetilde{A}_g=(A_g,\{0\})$,} for all $g\neq1$, and involution $\diamond$ being $(a,\alpha)^{\diamond} = (a^{*},\alpha)$.

\begin{lemma} \label{btildeinvar}
    Let $A$ be a unitary $(G,*)$-algebra. If $B\in \textnormal{var}^
    \sharp(A)$ then $\widetilde{B}\in \textnormal{var}^\sharp(A)$.
\end{lemma}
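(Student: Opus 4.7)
The plan is to establish the containment $\IdGstar{A} \subseteq \IdGstar{\widetilde B}$, which is equivalent to $\widetilde B \in \varGstar(A)$. Since $A$ is unitary, $\IdGstar{A}$ is generated as a $T_G^*$-ideal by its multilinear proper identities, so it suffices to show that every multilinear proper polynomial $f \in \IdGstar{A}$ vanishes under any graded $*$-evaluation on $\widetilde B$.

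First I would record the structural feature of $\widetilde B$ that drives everything. By the explicit description of the unity adjunction, $\widetilde B_1^+ = (B_1^+, F)$, while $\widetilde B_1^- = (B_1^-, 0)$ and $\widetilde B_g^\varepsilon = (B_g^\varepsilon, 0)$ for every $g \neq 1$ and $\varepsilon \in \{+,-\}$. Thus the nontrivial $F$-component can appear \emph{only} when a variable of type $x_{i,1}^+$ is evaluated. A direct computation shows that for any two elements of $\widetilde B$,
\[
\bigl[(a_1,\alpha_1),(a_2,\alpha_2)\bigr] = \bigl([a_1,a_2],\,0\bigr),
\]
so every commutator in $\widetilde B$ lies inside the subalgebra $(B,0) \cong B$, and its value depends only on the $B$-components of the entries.

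Now I would use the defining shape of a proper $\gi$-polynomial: the variables $x_{i,1}^+$ appear only inside commutators $w_1,\ldots,w_v$. Given an evaluation $\lambda$ of $f$ in $\widetilde B$, every factor in the prefix of each proper monomial takes values in $\widetilde B_g^\varepsilon$ with $(g,\varepsilon)\neq (1,+)$, hence has zero $F$-component; every commutator $w_j$ also has zero $F$-component by the identity above, and moreover equals the corresponding commutator evaluated only on the $B$-parts. Since the product of elements with zero $F$-component stays with zero $F$-component, the whole evaluation $\lambda(f)$ lies in $(B,0)$ and coincides with the evaluation $\lambda'(f)$ obtained by dropping the $F$-components of all $\lambda(x_{i,1}^+)$, i.e.\ with the evaluation in $B$ sending each $x_{i,g}^\varepsilon$ to the $B$-component of $\lambda(x_{i,g}^\varepsilon)$.

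Because $B \in \varGstar(A)$, we have $f \in \IdGstar{B}$, so $\lambda'(f)=0$, and therefore $\lambda(f)=0$. Since $\lambda$ was arbitrary, $f \in \IdGstar{\widetilde B}$, completing the inclusion $\IdGstar{A} \subseteq \IdGstar{\widetilde B}$ and hence $\widetilde B \in \varGstar(A)$. The only subtle point is the interplay between the proper form of $f$ (variables $x_{i,1}^+$ confined to commutators) and the specific place where the new unit can contribute ($\widetilde B_1^+$); once this is made explicit, the $F$-components cancel uniformly and the reduction to an evaluation in $B$ is immediate.
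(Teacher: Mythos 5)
Your proof is correct and takes essentially the same approach as the paper's: reduce to multilinear proper identities of $A$ via unitarity, then observe that any evaluation $\lambda$ of such an $f$ on $\widetilde B$ satisfies $\lambda(f)=(\lambda'(f),0)$ for the induced evaluation $\lambda'$ on $B$, which vanishes because $\IdGstar{A}\subseteq\IdGstar{B}$. You merely spell out the structural computation (commutators kill the $F$-component and only $x_{i,1}^+$ can pick it up) that the paper leaves implicit.
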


\begin{proof}
   Let $f \in \textnormal{Id}^\sharp(A)$ be a multilinear $(G,*)$-polynomial. Since both $A$ and $\widetilde{B}$ are unitary $(G,*)$-algebras, we may assume that $f$ is a proper $(G,*)$-polynomial. Let $\lambda$ be an evaluation of $f$ on elements of $\widetilde{B}$. As $f$ is proper and multilinear, we observe that
\[
\lambda(f) = (\lambda_1(f), 0),
\]
for some evaluation $\lambda_1$ of $f$ on $A$. The conclusion now follows from the fact that $f$ is an identity of $A$.

\end{proof}

\begin{lemma} \label{unitariornilpotentorcummu}
    Let $A$ be a unitary finite-dimensional $(G,*)$-algebra of polynomial codimension growth without unity. Then either $
    \widetilde{A}$ has exponential growth or $A\sim_{T_{G}^*}N$ or $A\sim_{T_{G}^*} C\oplus N$, where $N$ is a nilpotent $(G,*)$-algebra and $C$ is a commutative algebra with trivial grading and trivial involution.  
\end{lemma}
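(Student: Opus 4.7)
The plan is to apply Theorem~\ref{estrutura} to the polynomially growing algebra $A$, obtaining $A \sim_{T_G^*} B_1 \oplus \cdots \oplus B_m$ with each $\dim_F B_i/J(B_i) \leq 1$. If every $B_i$ is nilpotent, then $B = B_1 \oplus \cdots \oplus B_m$ is itself a nilpotent $(G,*)$-algebra, and setting $N = B$ yields $A \sim_{T_G^*} N$, which is the second alternative.

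Suppose instead that some $B_i$ satisfies $B_i/J(B_i) \cong F$, and pick a Wedderburn--Malcev lift $e_i \in B_i$ of the identity of $B_i/J(B_i)$. A short computation using the graded involution shows $e_i \in (B_i)_1^+$: the equation $e_i^2 = e_i \neq 0$ forces the grading degree $g$ of $e_i$ to satisfy $g^2 = g$, hence $g = 1$, and writing $e_i^* = \alpha e_i$ (by grading preservation) together with $(e_i^*)^* = e_i$ and $e_i^2 = e_i$ rules out $\alpha = -1$. The decisive case split is whether each $e_i$ is a two-sided identity for $B_i$. If not, a Peirce decomposition of $J(B_i)$ with respect to $e_i$ produces a nonzero $n \in J(B_i)$ with $e_i n = n$ and $n e_i = 0$ (or the symmetric variant), so $Fe_i + Fn \cong Fe_{11} + Fe_{12}$. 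Adjoining unity to this subalgebra produces, via Lemma~\ref{btildeinvar}, a copy of $UT_2$ inside $\widetilde{A}$ endowed with the induced $(G,*)$-structure, forcing $\widetilde{A}$ into exponential codimension growth and placing us in the first alternative.

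If every non-nilpotent $B_i$ is unital, pass to $\widetilde{A}$ and introduce the complementary idempotent $e_0 = \tilde{1} - \sum_i e_i$, giving a full orthogonal idempotent system $\{e_0, e_1, \ldots\}$. The same $UT_2$-embedding argument, applied now to any two distinct idempotents $e_j, e_\ell$ together with a hypothetical nonzero element of the Peirce component $e_j J e_\ell$, forces every off-diagonal Peirce component of $J = J(A) = J(\widetilde{A})$ to vanish under the polynomial-growth assumption on $\widetilde{A}$. Consequently $\widetilde{A}$ splits as an algebra direct sum $\bigoplus_{j \geq 0}(Fe_j + e_jJe_j)$, and restricting to $A$ yields $A = \bigoplus_{i \geq 1}(Fe_i + e_iJe_i) \oplus e_0 J e_0$, the last summand being nonzero precisely because $A$ has no unity. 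Setting $C = \bigoplus_{i \geq 1} Fe_i$, which by the preceding paragraph is commutative with trivial grading and trivial involution, and collecting the diagonal radical pieces $e_i J e_i$, the summand $e_0 J e_0$, and the original nilpotent $B_j$'s into a single nilpotent $(G,*)$-algebra $N$, one arrives at $A \sim_{T_G^*} C \oplus N$.

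The main obstacle I expect is this final refinement: verifying that each local block $Fe_i + e_i J e_i$ is genuinely $T_G^*$-equivalent to the orthogonal direct sum $Fe_i \oplus e_i J e_i$, so that $A$ truly decomposes as the orthogonal sum $C \oplus N$. The multiplication rule $e_i \cdot n = n$ for $n \in e_i J e_i$ a priori distinguishes the two structures, so the argument must combine the polynomial growth of $\widetilde{A}$, the non-unitality of $A$ (which guarantees $e_0 J e_0 \neq 0$ and so already forces an orthogonality identity of the desired type on $A$), and the explicit proper cocharacter computations from Section~3 to rule out any residual distinguishing identity between a local block and its orthogonal replacement.
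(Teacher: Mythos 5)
Your route is genuinely different from the paper's, and two of its steps break down. The paper does not lift idempotents, take Peirce decompositions, or embed $UT_2$: it applies \cite[Theorem 4.2]{Maralice} to $A$ itself, writing $A=A_1\oplus\cdots\oplus A_k+J$ with each $A_i\cong F$ and $A_lJA_m=0$ for $l\neq m$, and then splits into exactly two cases: either all $A_iJ=JA_i=0$, which at once gives $A\sim_{T_G^*}C\oplus N$ or $A\sim_{T_G^*}N$, or some $A_iJ\neq 0$, in which case it passes to $\widetilde{A}=\bar A_1\oplus\cdots\oplus\bar A_k\oplus\bar F+\bar J$, observes $\bar A_i\bar J\bar F=\bar A_i\bar J\neq 0$, and invokes the same theorem to place $\widetilde{A}$ in the exponential-growth alternative. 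Against this, your first case split is unsound as stated: if $e_i$ is not a two-sided identity of $B_i$, it does \emph{not} follow that some off-diagonal Peirce component is nonzero; it may happen that only $J_{00}=\{x\in J(B_i)\mid e_ix=xe_i=0\}$ is nonzero (take $B_i=Fe\oplus Fs$ with $s^2=es=se=0$), and then there is no $n$ with $e_in=n$, $ne_i=0$, no copy of $UT_2$, and no exponential growth, yet your argument sends this case to the first alternative. A second, repairable, inaccuracy: $e_i$ and $n$ live in $B_i$, a component of an algebra only $T_G^*$-equivalent to $A$, not a subalgebra of $A$, and $Fe_i+Fn$ is not $*$-closed; so the $UT_2$ is not ``inside $\widetilde{A}$'' --- one must argue through varieties (ordinary identities suffice, together with Lemma \ref{btildeinvar} applied with ambient algebra $\widetilde{A}$), and this needs to be said.

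The decisive gap is the one you flag at the end, and it is not a technicality that more cocharacter computations can remove: a local block $Fe_i+e_iJe_i$ is in general \emph{not} $T_G^*$-equivalent to $Fe_i\oplus e_iJe_i$. For instance $C_{3,*}^{g}=F1+FE+FE^2$ is exactly such a block and has quadratic $(G,*)$-codimension growth, whereas $F\oplus(FE+FE^2)$ has eventually bounded $(G,*)$-codimensions, as does any algebra of the form $C\oplus N$ with $C$ commutative of trivial grading and trivial involution; moreover, adjoining a unity to such a block (direct-summed with a nilpotent algebra to destroy unitality) again produces an algebra of polynomial growth, so the polynomial growth of $\widetilde{A}$ does not rule these blocks out of your final case. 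Consequently the most your Peirce-theoretic analysis yields in the remaining case is $A\sim_{T_G^*}\bigl(\bigoplus_i(Fe_i+e_iJe_i)\bigr)\oplus J_{00}$, i.e. unitary blocks plus a nilpotent part, which is weaker than the asserted $C\oplus N$. This is precisely where your route and the paper's diverge: the paper's dichotomy is on whether $A_iJ\neq 0$ at all, so the purely diagonal situation $e_iJe_i\neq 0$ is not kept in the polynomial-growth branch but is handed to the exponential-growth alternative via \cite[Theorem 4.2]{Maralice} applied to $\widetilde{A}$; your proposal keeps it and then cannot finish. To complete a proof of the statement as written you need the paper's argument (and the precise form of \cite[Theorem 4.2]{Maralice}), not a refinement of the block-versus-$F\oplus N$ comparison, which is false as an equivalence.
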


\begin{proof}
    
    Since $A$ is a finite-dimensional $\gi$-algebra with polynomial codimension growth, by \cite[Theorem 4.2]{Maralice} we may assume that $$A=A_1\oplus\cdots \oplus A_k \dotplus J.$$
    where $J$ denotes the Jacobson radical of $A$, each $A_i\cong F$ with trivial involution, for all $i$, and $A_lJA_m =0$ for all $l\neq m$. 

Observe that, if $A=A_1\oplus \cdots \oplus A_k \oplus J$ as a direct summand, then we have $A\sim_{T_G^*}C\oplus N$ or $A\sim_{T_{G}^*}N$. 

    Otherwise, there must exist $1\leq i\leq k$ such that $A_iJ \neq 0$.  Since $\widetilde{A}=(A_1\oplus \cdots \oplus A_{k} \dotplus J) \times F$, we obtain
$$
\widetilde{A}=\bar{A}_{1}\oplus\cdots \oplus \bar{A}_{k}\oplus \bar{F}\dotplus\bar{J},
$$
where $\bar{A}_{i}=\{(a_{i},0)\mid a_{i}\in A_{i}\}$, { $\bar{F}=\{(0,\alpha)\mid \alpha\in F\}\cong F$} and $\bar{J}=\{(j,0)\mid j\in J\}$. Now, note that for $l\neq m$, we have $\bar{A}_{l}\bar{J}\bar{A}_{m}=0$. However, since $\bar{A}_{i}\bar{J}\, \overline{F}=\bar{A}_{i}\bar{J}\ne0$, by \cite[Theorem 4.2]{Maralice} we obtain $\widetilde{A}$ with exponential codimension growth.
    
\end{proof}

\begin{corollary} 
\label{unitarunilpotentcomm}
    Let $A$ be a unitary $(G,*)$-algebra of polynomial codimension growth. If $B\in \textnormal{var}(A)$ is a finite-dimensional algebra then either $B$ is unitary or $B\sim_{T_G^*}N$ or $B\sim_{T_G^*}C\oplus N$, where $C$ is a commutative $(G,*)$-algebra with trivial involution and trivial grading and $N$ is a nilpotent algebra.
\end{corollary}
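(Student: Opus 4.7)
The plan is to reduce the corollary directly to Lemma \ref{unitariornilpotentorcummu} by using Lemma \ref{btildeinvar} to rule out the exponential case. First I would handle the trivial dichotomy: if $B \in \vargri(A)$ is unitary, there is nothing to prove, so I assume $B$ has no unit. Since $B \in \varGstar(A)$ and $A$ has polynomial codimension growth, $B$ itself has polynomial codimension growth, which is exactly the hypothesis required to feed $B$ into Lemma \ref{unitariornilpotentorcummu} (the finite dimensionality of $B$ is assumed). Thus one of three alternatives holds: $\widetilde{B}$ has exponential growth, $B \sim_{T_G^*} N$, or $B \sim_{T_G^*} C \oplus N$, with $N$ nilpotent and $C$ commutative of trivial grading and involution.

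The second step is to exclude the exponential alternative. By Lemma \ref{btildeinvar}, since $B \in \varGstar(A)$ we have $\widetilde{B} \in \varGstar(A)$. Because $A$ has polynomial codimension growth, every algebra in $\varGstar(A)$ has polynomial codimension growth; in particular $\widetilde{B}$ cannot grow exponentially. Therefore the first alternative in Lemma \ref{unitariornilpotentorcummu} is impossible, and we are left with $B \sim_{T_G^*} N$ or $B \sim_{T_G^*} C \oplus N$, which is the desired conclusion.

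There is essentially no serious obstacle here: the corollary is a packaging of Lemma \ref{unitariornilpotentorcummu} together with the variety-closure property provided by Lemma \ref{btildeinvar}. The only point worth being careful about is that the hypothesis of the invoked lemma is stated for algebras \emph{without} unity, so the split into the unitary and non-unitary cases at the very beginning of the argument is what makes the application legitimate. Everything else is a direct citation.
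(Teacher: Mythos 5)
Your proposal is correct and follows essentially the same route as the paper's proof: assume $B$ nonunitary, note it inherits polynomial growth from $A$, use Lemma~\ref{btildeinvar} to place $\widetilde{B}$ in $\varGstar(A)$ and thereby exclude the exponential alternative, and then conclude via Lemma~\ref{unitariornilpotentorcummu}. Your explicit remark that the split into unitary/nonunitary cases is what legitimizes invoking the lemma (stated for algebras without unity) is exactly the point the paper uses implicitly.
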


\begin{proof}
Suppose that $B$ is a nonunitary $(G,*)$-algebra such that $B \in \textnormal{var}^\sharp(A)$. Since $A$ has polynomial growth, it follows that $B$ also has polynomial growth. Moreover, by Lemma~\ref{btildeinvar}, we have $\widetilde{B} \in \textnormal{var}^\sharp(A)$, and therefore $\widetilde{B}$ has polynomial growth. The proof now follows from Lemma~\ref{unitariornilpotentorcummu}.
\end{proof}

Before we present the main result of this section, we define the following set of $(G,*)$-algebras
$$ \mathcal{I}=\{ F, C_{2}^{ {h}}, C_{2,*}^{ {g}}, C_{3}^{ {h}},C_{3,*}^{ {g}}, U_{3,*}^{ {g}},N_{3,*}^{ {h}},\mathcal{G}_{2,\tau}^{g,g}, \mathcal{G}_{2,\psi }^{h,h},  \mathcal{G}_{2,\psi}^{p,q},  \mathcal{G}_{2,\tau}^{r,q},\mathcal{G}_{2,\gamma}^{r,q}, W_{\nu_1}^{p,q},W_{\nu_2}^{r,q},  W_{\nu_3}^{r,q} \} $$ 
    where $g,h,p, q, r \in G$, $h,p,q\neq 1, q\neq r$. 

Also, we consider
$$\widetilde{\mathcal{I}}=\mathcal{I}-\{ F,C_{2}^{ {h}},C_{2,*}^{ {g}} ~|~ g, h\in G ,h \neq 1\}.$$
\begin{theorem}
    Let $A$ be a finite-dimensional unitary $\gi$-algebra over a field $F$ of characteristic zero with quadratic codimension growth. Then, $A$ is T${_G^*}$-equivalent to a finite direct sum of $\gi$-algebras in the set $\mathcal{I},$ where at least one algebra in the set $\widetilde{\mathcal{I}}$ appears as a component of such a direct sum. 
\end{theorem}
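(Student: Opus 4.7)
The plan is to turn the nonzero multiplicities of the proper cocharacter of $A$ into explicit summands from $\mathcal{I}$ by directly invoking the correspondence lemmas of this section. First I apply Theorem~\ref{estrutura} to write $A\sim_{T_G^*}B_1\oplus\cdots\oplus B_m$, where each $B_i$ is a finite-dimensional $\gi$-algebra with $\dim_F B_i/J(B_i)\le 1$. Every summand with $\dim_F B_i/J(B_i)=0$ is nilpotent and already appears in $\mathcal{I}$ as an instance of $N$, so the analysis reduces to the unitary summands $B_i=F+J(B_i)$, each of which has at most quadratic codimension growth and satisfies the standing hypothesis of Section~4. For such a $B_i$ the relation $\Gamma_n^\sharp\subseteq\Idd(B_i)$ for all $n\ge 3$ makes the $T_G^*$-ideal entirely determined by the multiplicities appearing in the proper $(n_1,\ldots,n_{2k})$-cocharacter in degrees $1$ and $2$, which by \eqref{multiplicities} are bounded by $1$ (or by $2$, only for the bipartitions of type $((1)_{s^\delta},(1)_{t^\epsilon})$).

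For each nonzero multiplicity appearing in the proper cocharacter of $B_i$, Lemmas~\ref{lineares1}--\ref{mult_2} provide exactly one minimal algebra from $\mathcal{I}$ (or, in the multiplicity-$2$ case, the two-summand direct sum from Lemma~\ref{Direct_sum}, e.g.\ $\mathcal{G}_{2,\psi}^{p,q}$ together with $W_{\nu_1}^{p,q}$) that lies in $\varGstar(B_i)$. I collect all of these algebras into a direct sum $C_i$ of elements of $\mathcal{I}$. Since every summand of $C_i$ belongs to $\varGstar(B_i)$, and $T_G^*$-ideals distribute as intersections over direct sums, I conclude $C_i\in\varGstar(B_i)$.

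The reverse inclusion $B_i\in\varGstar(C_i)$ follows by a two-sided multiplicity comparison. On the one hand, $C_i\in\varGstar(B_i)$ combined with Remark~\ref{remark} gives $m_\lambda(C_i)\le m_\lambda(B_i)$ for every multipartition $\lambda$. On the other hand, any chosen summand $b$ of $C_i$ satisfies $b\in\varGstar(C_i)$, and by Remark~\ref{remark} together with the explicit values collected in the tables of Lemmas~\ref{T-ideal_C_3}--\ref{Direct_sum}, we have $m_\lambda(B_i)=m_\lambda(b)\le m_\lambda(C_i)$ for the $\lambda$ that $b$ was chosen to realize. Combining both inequalities yields equality of all multiplicities, and since the proper cocharacters determine the multilinear proper identities and $\Gamma_n^\sharp$ is contained in both $T_G^*$-ideals for $n\ge 3$, we obtain $B_i\sim_{T_G^*}C_i$. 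Reassembling all the $C_i$'s together with the nilpotent $B_j$'s produces the required direct-sum decomposition of $A$. Finally, since $c_n^\sharp(A)$ is genuinely quadratic, some $B_i$ has a nonzero multiplicity in degree $2$, which by construction forces a summand from $\widetilde{\mathcal{I}}$ to appear in the final direct sum. The main obstacle is the careful bookkeeping in the multiplicity-$2$ case: $C_i$ must contain both constituent algebras of the relevant pair from Lemma~\ref{Direct_sum} simultaneously, in order to inflate $m_\lambda(C_i)$ to the required value $2$ rather than stopping at $1$.
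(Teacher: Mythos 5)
Your strategy is essentially the paper's: decompose $A$ by Theorem~\ref{estrutura}, translate each nonzero degree-$1$ and degree-$2$ proper multiplicity of a component into an algebra of $\mathcal{I}$ inside its variety via Lemmas~\ref{lineares1}--\ref{mult_2}, and then force $T_G^*$-equivalence by the two-sided multiplicity comparison (your comparison argument, including the care taken in the multiplicity-$2$ case, is correct and matches the paper's use of Remark~\ref{remark}). However, there is a genuine gap at the very first reduction. Theorem~\ref{estrutura} only gives components $B_i$ with $\dim_F B_i/J(B_i)\le 1$, i.e.\ nilpotent or ``of type $F+J(B_i)$'', and such a component need not be unitary: the one-dimensional semisimple part is spanned by an idempotent that may fail to act as a two-sided unit on $J(B_i)$. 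You silently identify ``of type $F+J(B_i)$'' with ``unitary'' and feed every non-nilpotent component into the Section~4 machinery; but the bounds \eqref{multiplicities}, all of Lemmas~\ref{lineares1}--\ref{mult_2}, and your concluding step (equal proper multiplicities in degrees $\le 2$ plus $\Gamma_n^\sharp\subseteq\Idd$ for $n\ge 3$ imply $B_i\sim_{T_G^*}C_i$) all rest on the fact that for a \emph{unitary} algebra the $T_G^*$-ideal is generated by proper multilinear polynomials, which fails without a unit. This is exactly why the paper proves Lemmas~\ref{btildeinvar} and \ref{unitariornilpotentorcummu} and Corollary~\ref{unitarunilpotentcomm}: since $B_i\in\varGstar(A)$ with $A$ unitary of polynomial growth, the adjoined-unit argument shows a non-unitary component is $T_G^*$-equivalent to $N$ or to $C\oplus N$, and only after this replacement may one assume the remaining components are unitary. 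Your proposal never invokes these results, so as written the case of a non-unitary, non-nilpotent component is simply not covered.

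Apart from this, two smaller points. Where the paper disposes of the components of at most linear growth by citing \cite[Theorem 6.5]{Maralice}, you rerun the degree-$1$ multiplicity analysis; this is legitimate and somewhat more self-contained, but again only for unitary components, and you should add $F$ as the summand for a unitary component all of whose degree-$1$ and degree-$2$ proper multiplicities vanish (otherwise $C_i$ is an empty sum). With the unitarity step supplied by Corollary~\ref{unitarunilpotentcomm}, the rest of your argument goes through and coincides with the paper's proof.
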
 
\begin{proof}
Since $A$ has polynomial growth, by Theorem~\ref{estrutura}, it is $T_G^*$-equivalent to a finite direct sum
\[
A \sim_{T_G^*} A_1 \oplus \cdots \oplus A_m,
\] where each $A_i$ is a finite-dimensional $\gi$-algebra which is either nilpotent or of type $F + J(A_i)$.   
Since $A$ has quadratic codimension growth, it follows that
$A_1 \oplus \cdots \oplus A_m$ also has quadratic codimension growth.
Therefore, by Corollary~\ref{unitarunilpotentcomm}, the algebra
$A_1 \oplus \cdots \oplus A_m$ is unitary. Consequently,
\[
A_t = F + J(A_t), \quad \text{for all } t = 1, \ldots, m,
\]
and there exists an index $i$ such that $A_i$ has quadratic growth of the
sequence of $(G,*)$-codimensions. \black

In this case, 
according to Corollary \ref{unitarunilpotentcomm}, we may also assume that $A_i$  is unitary and so, we have $\Gamma_n^\sharp \subseteq \Idd(A_i)$ for all $n \geq 3$. For $n=1$ and $n=2$, we consider $n = n_1 + \cdots + n_{2k}$ and analyze the multiplicities in the proper $(n_1,\ldots,n_{2k})$-cocharacters of $A_i$. 
Recall that, by~(\ref{multiplicities}), we have
$$   0\leq m_{\lambda }\leq 1 \quad \mbox{ and }\quad\quad 0\leq m_{((1)_{s^\delta},(1)_{t^\epsilon})}\leq 2,$$ for all $\lambda \in \mathcal{S}=\{((1)_{u^{\epsilon_{1}}}),((2)_{u^{\epsilon_1}}),(1,1)_{u^{\epsilon_2}},((1)_{1^+},(1)_{u^{\epsilon_1}})\mid u\in G, \epsilon_i\in \{+,-\},  u^{\epsilon_1}
\neq 1^+\}$ and $s^\delta\neq t^\epsilon \neq 1^+$.

\smallskip

First we consider $n = 1$. In this case, if $m_{((1)_{u^{\epsilon_1}})} = 1$ then by Lemma~\ref{lineares1} either $u^{\epsilon_1} = h^+$ and $C_2^h \in \varGstar(A_i)$, or $u^{\epsilon_1} = g^-$ and $C_{2,*}^{g} \in \varGstar(A_i)$, for some $g,h \in G$ with $g \neq 1$.

\smallskip

Now consider the case $n = 2$. 
By Lemma~\ref{jordan}, if $m_{((2)_{u^{\epsilon_1}})} = 1$ then either $u^{\epsilon_1} = g^+$ and $C_3^g \in \varGstar(A_i)$, or $u^{\epsilon_1} = h^-$ and $C_{3,*}^h \in \varGstar(A_i)$, for some $g,h \in G$ with $g \neq 1$.

\smallskip

According to Lemma~\ref{lemma1-1}, if $m_{((1,1)_{u^{\epsilon_2}})} = 1$ then either $u^{\epsilon_2} = 1^+$ and $U_{3,*}^1 \in \varGstar(A_i)$, or $u^{\epsilon_2} = g^-$ and $\mathcal{G}_{2,\tau}^{g,g} \in \varGstar(A_i)$, or $u^{\epsilon_2} = h^+$ and $\mathcal{G}_{2,\psi}^{h,h} \in \varGstar(A_i)$, for some $g,h \in G$ with $h \neq 1$.

\smallskip

If $m_{((1)_{1^+},(1)_{u^{\epsilon_1}})} = 1$ then by Lemma~\ref{Lemma1+} either $u^{\epsilon_1} = h^+$ and $U_{3,*}^h \in \textnormal{var}^\sharp(A_i)$, or $u^{\epsilon_1} = g^-$ and $N_{3,*}^g \in \varGstar(A_i)$, for some $g,h \in G$ with $h \neq 1$.

\smallskip

Finally, if $m_{((1)_{s^\epsilon},(1)_{u^\sigma})} \neq 0$, we must consider the following cases. 
If $m_{((1)_{s^\delta},(1)_{t^\epsilon})} = 1$, then by Lemma~\ref{ultima_mult_1} either
\begin{itemize}
    \item $(s^\delta,t^\epsilon) = (g^+,h^+)$ and $\mathcal{G}_{2,\psi}^{g,h}$ or $W_{\nu_1}^{g,h}$ belong to $\varGstar(A_i)$;
    \item $(s^\delta,t^\epsilon) = (g^-,h^-)$ and $\mathcal{G}_{2,\tau}^{g,h}$ or $W_{\nu_2}^{g,h}$ belong to $\varGstar(A_i)$;
    \item $(s^\delta,t^\epsilon) = (g^-,h^+)$ and $\mathcal{G}_{2,\gamma}^{g,h}$ or $W_{\nu_3}^{g,h}$ belong to $\varGstar(A_i)$.
\end{itemize}

The case $m_{((1)_{s^\epsilon},(1)_{t^\delta})} = 2$ is given by Lemma~\ref{mult_2}. 
In this situation, when $(s^\epsilon,t^\delta) = (g^+,h^+)$ we have $\mathcal{G}_{2,\psi}^{g,h} \oplus W_{\nu_1}^{g,h} \in \varGstar(A_i)$; 
if $(s^\epsilon,t^\delta) = (g^-,h^-)$ then $\mathcal{G}_{2,\tau}^{g,h} \oplus W_{\nu_2}^{g,h} \in \varGstar(A_i)$; 
and when $(s^\epsilon,t^\delta) = (g^-,h^+)$ we have $\mathcal{G}_{2,\gamma}^{g,h} \oplus W_{\nu_3}^{g,h} \in \varGstar(A_i)$.

\smallskip

Moreover, we recall that at least one of the multiplicities
\[
m_{((2)_{u^{\epsilon_1}})}, \quad 
m_{((1,1)_{u^{\epsilon_2}})}, \quad  
m_{((1)_{1^+},(1)_{u^{\epsilon_1}})}, \quad 
m_{((1)_{s^\delta},(1)_{t^\epsilon})}
\]
must be nonzero.

\smallskip

Observe that, for each nonzero multiplicity, there exists a corresponding $(G,*)$-algebra belonging to the variety generated by $A_i$. 
Let $B$ denote the $\gi$-algebra obtained as the direct sum of the $\gi$-algebras associated with the nonzero multiplicities appearing in the decomposition of the proper $(n_1,\ldots,n_{2k})$-cocharacters of $A_i$, for $n = n_1 + \cdots + n_{2k}$ with $n \in \{1,2\}$. 
By Lemmas~\ref{lineares1}--\ref{ultima_mult_1}, we have $B \in \varGstar(A_i)$. 
Furthermore, by Remark~\ref{remark}, the algebras $A_i$ and $B$ have the same multiplicities in the decomposition of all proper $(n_1,\ldots,n_{2k})$-cocharacters. 
Therefore, $c_n^\sharp(A_i) = c_n^\sharp(B)$, and consequently $\varGstar(B) = \varGstar(A_i)$.

\smallskip

Finally, observe that if $A_t$ has at most linear codimension growth for some $1 \leq t \leq m$, then by~\cite[Theorem~6.5]{Maralice}, $A_t$ is $T_G^*$-equivalent to a finite direct sum of $\gi$-algebras belonging to the set  $$\{F,C_{2}^{ {h}},C_{2,*}^{ {g}} ~|~ g, h\in G ,h \neq 1\}.$$

Finally, recalling that $A \sim_{T_G^*} A_1 \oplus \cdots \oplus A_m$ and at least one of the components $A_i$ has quadratic codimension growth, the result then follows. 

\end{proof}

In \cite{Maralice}, the authors provided a complete classification of the varieties generated by finite-dimensional $\gi$-algebras with at most linear codimension growth. In particular, they proved that any such variety is generated by a finite direct sum of finite-dimensional $\gi$-algebras generating minimal varieties, each one having at most linear codimension growth. Combining this classification with the preceding theorem, we derive the following result.
    
\begin{corollary}
    Let $A$ be a finite-dimensional unitary $\gi$-algebra. Then $c_n ^\sharp(A)\leq \alpha n^2$ if and only if $A$ is $T_G^*$-equivalent to a finite direct sum of algebras generating minimal varieties with at most quadratic codimension growth.
\end{corollary}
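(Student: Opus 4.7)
The plan is to deduce this corollary directly from the preceding theorem, combined with two external results: the classification of unitary $\gi$-varieties of at most linear codimension growth from \cite[Theorem 6.5]{Maralice}, and the classification of minimal $\gi$-varieties of quadratic growth from \cite{Wesley3}. The latter reference is what guarantees that every algebra in the set $\widetilde{\mathcal{I}}$ generates a minimal variety of quadratic growth, while the algebras in $\{N, F, C_2^h, C_{2,*}^g\}$ generate minimal varieties of at most linear growth. With these two classifications in hand, the corollary becomes essentially a bookkeeping consequence of the structural theorem established in this section.

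For the forward implication, I would assume $c_n^\sharp(A) \le \alpha n^2$ and begin by applying Theorem~\ref{estrutura} to write $A \sim_{T_G^*} A_1 \oplus \cdots \oplus A_m$, where each $A_i$ is either nilpotent or of type $F + J(A_i)$. Each summand inherits polynomial codimension growth of order at most $2$, so I would split the indices $i$ into two classes: those for which $c_n^\sharp(A_i)$ is at most linear, and those for which it is quadratic. On the first class I would invoke \cite[Theorem~6.5]{Maralice} to refine each $A_i$ into a $T_G^*$-equivalent direct sum of minimal linear-growth algebras drawn from $\{N,F,C_2^h,C_{2,*}^g\}$. On the second class I would apply the preceding theorem to obtain a $T_G^*$-equivalent decomposition of $A_i$ into algebras from $\mathcal{I}$, each of which, by \cite{Wesley3}, generates a minimal $\gi$-variety of at most quadratic growth. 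Assembling these decompositions across all $i$ would yield the required global description of $A$.

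The converse is a direct consequence of the subadditivity of codimensions under direct sums: if $A \sim_{T_G^*} B_1 \oplus \cdots \oplus B_r$ with each $B_i$ generating a minimal variety of at most quadratic growth, then
\[
c_n^\sharp(A) \le \sum_{i=1}^{r} c_n^\sharp(B_i) \le \alpha n^2
\]
for a suitable constant $\alpha$, since each summand is bounded by a quadratic polynomial whose leading term is controlled by the minimal-variety estimates. The main (and essentially only) obstacle in the forward direction is the identification of the algebras in the set $\mathcal{I}$ with the generators of minimal varieties listed in \cite{Wesley3}; this requires nothing beyond cross-referencing the notation between the two papers, since the cocharacter information tabulated in Section~3 already certifies that each such algebra satisfies $c_n^\sharp \sim q n^t$ with $t \le 2$ and no proper $\gi$-subvariety attaining the same degree.
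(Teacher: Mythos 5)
Your proposal is correct and follows essentially the same route as the paper: the forward direction combines the preceding theorem with the linear-growth classification of \cite[Theorem~6.5]{Maralice} and the fact (from \cite{Wesley3} and the linear classification) that the algebras in $\mathcal{I}$ generate minimal varieties of at most quadratic growth, while the converse is the standard subadditivity of $(G,*)$-codimensions over direct sums. The only small caution is that when you apply the preceding theorem summandwise you should justify that a quadratic-growth summand $A_i$ is unitary (Corollary~\ref{unitarunilpotentcomm} excludes the nonunitary possibilities, which have bounded growth), or simply apply that theorem to $A$ itself when $A$ has quadratic growth, which is in effect what the paper does.
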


\textbf{Funding} The first author was partially supported by FAPESP, grant no. 2025/5699-0. The second author  was partially supported by CAPES. The third author was partially supported by CNPq.

\textbf{Data Availability} No relevant declarations are applicable. \vspace{0.2cm}

\textbf{ \large Declarations} \vspace{0.2cm}

\textbf{Ethical Approval} Not applicable.

\textbf{Competing interests} The authors declare no competing interests.


\end{document}